\tikzset{>=stealth}
\tikzset{slashed/.style={
    decoration={markings,
      mark=at position 0.5 with {
        \node[transform shape] (tempnode) {\tiny $\backslash$};
      }
    },
    postaction={decorate}}}
\tikzset{double slashed/.style={
    decoration={markings,
      mark=at position 0.5 with {
        \node[transform shape] (tempnode) {\tiny $\backslash\backslash$};
      }
    },
    postaction={decorate}}}
\definecolor{cbblue}{RGB}{100,134,255}
\definecolor{cbred}{RGB}{220,37,127}
\definecolor{cborange}{RGB}{225,176,0}
\definecolor{cbteal}{RGB}{110,180,130}
\newcommand{\1}{\mathbf{1}}
\newcommand{\Ab}{\operatorname{Ab}}
\newcommand{\Alg}{\operatorname{Alg}}
\newcommand{\Cat}{\operatorname{Cat}}
\newcommand{\fin}[1]{#1^\textrm{fin}}
\newcommand{\Hom}{\operatorname{Hom}}
\newcommand{\inj}{\mathrm{inj}}
\newcommand{\isom}{\cong}
\newcommand{\I}{\mathbb{1}}
\newcommand{\Lax}{\operatorname{Lax}}
\newcommand{\Lin}{\operatorname{Lin}}
\newcommand{\Mod}{\operatorname{Mod}}
\newcommand{\mono}{\mathrm{mono}}
\newcommand{\Ob}{\operatorname{Ob}}
\newcommand{\op}{\mathrm{op}}
\newcommand{\Poset}{\operatorname{Poset}}
\newcommand{\Rel}{\operatorname{Rel}}
\newcommand{\Set}{\operatorname{Set}}
\newcommand{\Vect}{\operatorname{Vect}}
\newcommand{\bB}{\mathbb{B}}
\newcommand{\bD}{\mathbb{D}}
\newcommand{\bN}{\mathbb{N}}
\newcommand{\bP}{\mathbb{P}}
\newcommand{\bZ}{\mathbb{Z}}
\newcommand{\cB}{\mathcal{B}}
\newcommand{\cC}{\mathcal{C}}
\newcommand{\cD}{\mathcal{D}}
\newcommand{\cE}{\mathcal{E}}
\newcommand{\cP}{\mathcal{P}}
\newcommand{\bulletblue}{\textcolor{cbblue}{\bullet}}
\newcommand{\bulletred}{\textcolor{cbred}{\bullet}}
\newcommand{\bulletorange}{\textcolor{cborange}{\bullet}}
\newcommand{\bulletteal}{\textcolor{cbteal}{\bullet}}
\newtheorem{thm}{Theorem}[subsection]
\newtheorem{lemma}[thm]{Lemma}
\newtheorem{prop}[thm]{Proposition}
\newtheorem{conj}[thm]{Conjecture}
\theoremstyle{definition}
\newtheorem{defn}[thm]{Definition}
\theoremstyle{remark}
\newtheorem{remark}[thm]{Remark}
\newtheorem{example}[thm]{Example}
\newcolumntype{C}{>{$}c<{$}}
\newcounter{grumbles}
\newcommand{\countgrumbles}
  {\ifnum\value{grumbles}=0
      {}
   \else\ifnum\value{grumbles}=1
      \message{^^J^^J*** There is 1 grumble in total. ***^^J^^J}
   \else
      \message{^^J^^J*** There are \arabic{grumbles} grumbles in total. ***^^J^^J}
   \fi\fi}
\title{Interacting Monoidal Structures with Applications in Computing}
\author{James Cranch \and Georg Struth}
\begin{document}
\maketitle

\begin{abstract}
  With a view on applications in computing, in particular concurrency
  theory and higher-dimensional rewriting, we develop notions of
  $n$-fold monoid and comonoid objects in $n$-fold monoidal categories
  and bicategories. We present a series of examples for these
  structures from various domains, including a categorical model for a
  communication protocol and a lax $n$-fold relational monoid, which
  has previously been used implicitly for higher-dimensional rewriting
  and which specialises in a natural way to strict $n$-categories. A
  special set of examples is built around modules and algebras of the
  boolean semiring, which allows us to deal with semilattices,
  additively idempotent semirings and quantales using tools from
  classical algebra.

  \vspace{\baselineskip}

  \noindent \textbf{Keywords}: $n$-fold monoidal categories, $n$-fold monoid objects, semilattices, relational monoids.

  \vspace{\baselineskip}

  \noindent \textbf{MSC subject classifiers}: 18D10, 68Q85, 06A12.

\end{abstract}

\tableofcontents


\section{Introduction}
\label{s:introduction}

Monoidal structure appears in many contexts and guises in
computing. The sequential and parallel compositions of imperative
programs give obviously rise to monoids; the effects of imperative
programs on program stores the can be seen as monoid actions. In
automata theory, monoids can be associated in a canonical way with
formal languages, and types of languages can be characterised in terms
of their properties~\cite{Sakarovitch}. Computational resources are
considered as monoidal structure in Petri nets~\cite{MesMon}, linear
logic~\cite{Girard} or separation logic~\cite{CalOHeYan}, and
computational effects in functional programs can be modelled as
monads~\cite{Moggi}, which are generalised monoids in suitable
categories.

Multiple interacting monoidal structures appear naturally for instance
in concurrency theory. Examples are the concurrent
monoids~\cite{HoaMolStrWeh} and relational interchange
monoids~\cite{CraDohStr1} considered in the context of concurrent
Kleene algebras~\cite{HoaMolStrWeh} Questions about their
categorification lead directly to questions about double monoid
objects in double monoidal categories, $n$-fold monoid objects in
$n$-fold monoidal categories and lax $n$-fold monoids in monoidal
bicategories.

A concrete example of a relational exchange monoid can be found in the
interleaving semantics of concurrent programs where program executions
are modelled as finite lists of events that occur in temporal
order. The serial composition $\odot$ of two such executions then
corresponds to the concatenation of their event lists; their parallel
composition $\otimes$ yields the set of event lists in which the
events of the component lists have been inserted in their original
order in all possible ways. Both compositions are associative (the
type of $\otimes$ requires extending the operation Kleisli-style to
power sets) and they share the list consisting of the empty event as
their unit. The resulting monoidal structures interact via the
interchange law
\begin{equation*}
  (w\otimes x)\odot (y\otimes z) \subseteq (w\odot y) \otimes (x\odot z)
\end{equation*}
for all event lists $w$, $x$, $y$ and $z$, where on the left-hand side
of the inclusion, $\odot$ is again extended to powersets. The parallel
composition can alternatively be modelled as a ternary relation
$R^z_{xy}\Leftrightarrow z \in x\otimes y$, which describes the
multiplication of a relational monoid -- a monoid object in the
monoidal category $\Rel$ with sets as objects and relations as
arrows~\cites{Rosenthal,KenPar}. The serial composition can be coded
along the same lines as $S^z_{xy} \Leftrightarrow z = x\odot y$ to get
a second relational monoid. The associativity laws of $\otimes$ and
$\odot$ then correspond to associativity for $R$ and $S$ in relational
monoids; the interchange law can be coded in similar fashion as a
relational interchange law.  This yields a relational interchange
monoid~\cite{CraDohStr1}.

As an example of a concurrent monoid rooted in true concurrency,
consider concurrent program executions where events are partially
ordered with respect to (temporal or causal) precedence, so that
antichains indicate concurrent events. A convenient model for such
partial-order concurrency~\cite{Vogler} is formed by isomorphism
classes of event-labelled posets, which are known as partial
words~\cite{Grabowski} or pomsets~\cite{Pratt}. The parallel
composition $\otimes$ of two concurrent program executions is then
modelled by the disjoint union of the underlying pomsets, while their
serial composition $\otimes$ makes every element in the first pomset
precede every element in the second (while preserving the internal
order of the two pomsets). Once again both operations are associative;
they have the empty pomset as their unit.  Further, a program subsumes
a second one if there exists an order preserving map from the first
pomset to the second which is bijective on events. Subsumption induces
a partial order $\preceq$ on pomsets and once again it can be shown
that the two monoidal structures interact via interchange:
\begin{equation*}
   (w\otimes x)\odot (y\otimes z) \preceq (w\odot y) \otimes (x\odot z)
 \end{equation*}
 for all pomsets $w$, $x$, $y$ and $z$. This yields a concurrent
 monoid~\cite{HoaMolStrWeh}. When modelled as a relational monoid, the
 same structure as in the previous example appears~\cite{CraDohStr1}.

 This raises the question about a more general structural explanation
 of these examples, and beyond, in terms of interacting monoid objects
 in suitable monoidal categories using suitable lax monoidal functors,
 as explained above. A systematic exploration of multiple strict
 monoidal structures has been given by Balteanu, Fiedorowcz, Schwänzl
 and Vogt~\cite{BFSV}. Yet the interchange laws in our examples
 require less strictness, which can be found in more recent work by
 Aguiar and Mahajan~\cite{AguMah}. Applications of monoidal categories
 in concurrency theory were pioneered half a century ago by
 Winkowski~\cite{Wink}; Meseguer and Montanari's influential article
 on Petri nets as monoids~\cite{MesMon} is another early
 example. Arguably the earliest use of monoidal categories in
 computing is Hotz's work on electronic circuits~\cite{Hot1}, where an
 early instance of string diagrams appears as well.

 Our main contribution, in response to this question, lies in the
 development of a categorical framework for interacting monoids and
 the discussion of a series of examples, some abstract, some
 concrete.

 For this, we adapt Aguiar and Mahajan's approach to $n$-fold monoidal
 categories~\cite{AguMah}, in which suitable notions of $n$-fold
 monoidal functors, $n$-fold monoid objects and $n$-fold comonoid
 objects can be developed. Beyond that we introduce a notion of
 $n$-fold monoid object in a monoidal bicategory, which we need for
 examples such as the above.

 With this set-up, in the first part of the paper, we discuss a
 variety of examples relevant to concurrency in particular, to
 computing in general and of more structural nature, including the
 concurrent monoids, as instances of $2$-fold monoid objects, and
 relational interchange monoids, as instances of lax $2$-fold monoid
 objects in $\Rel$, with their concrete models discussed above. We
 also present two more substantial examples. First, we show how
 communication protocols can be modelled using the $n$-fold monoidal
 structure available in categories with products and
 coproducts. Second, we explain how strict $n$-categories can be
 formalised as instances of lax $n$-fold relational monoids, and
 further, how the more general $n$-catoids, which have previously been
 proposed as a convenient framework for higher-dimensional
 rewriting~\cite{CMPS}, are equivalent to lax $n$-fold relational
 monoids, thus giving a structural explanation for these higher
 catoids.

 In a second series of examples and applications, we consider algebras
 and coalgebras over the boolean semiring, extending and adapting work
 by Connes and Consani~\cite{ConCon}. This allows us to treat
 semilattices as modules and additively idempotent semirings or
 quantales as monoid objects in modules on the boolean
 semiring. Relational monoids and $n$-fold lax relational monoid
 objects, in particular, arise in an alternative way in this setting,
 and we can extend the relational interchange monoid structure of
 words with respect to concatenation and shuffle in a structured way
 to concurrent monoid structure at language (or powerset) level. On
 the co-side, we consider incidence coalgebras in the sense of Joni
 and Rota~\cite{JonRot}, and show how convolution algebras arise in
 the presence of a comonoid and a monoid object, with special
 consideration of the boolean semiring, where languages with respect
 to word concatenation and shuffle arise once again. Finally we show a
 grading construction similar, but structurally different to
 convolution yields another $n$-fold monoidal structure.

 The overall structure of this article is as follows. In
 Section~\ref{s:monoidal-categories} we recall the basic machinery of
 monoidal categories, while in Section~\ref{s:examples-monoidal} we
 present examples of these structures that are of interest later on,
 including relational monoids. In Section~\ref{s:n-fold-monoidal} we
 first introduce notions of categories with two monoidal
 structures. We then extend this development to $n$-monoidal
 structures and finally define lax $n$-fold monoidal objects in
 bicategories. In Section~\ref{s:examples-nfold} we provide a series
 of examples: from partial orders, category theory itself, concurrency
 theory, categories of graphs and related objects, categorical models
 for communication protocols and finally higher category theory and
 higher-dimensional rewriting. Last but not least, in Section
 \ref{s:boolsemiring}, we specialise this material to the
 consideration of semilattices, additively idempotent semirings and
 similar structures and we extend our considerations from monoid
 objects to comonoids, bimonids, convolution algebras and graded
 objects with $n$-fold monoidal structure.

We suspect that much of this material will be unfamiliar to some
readers. As an attempt to engage with this, even in cases where
abstract definitions are possible, we translate them into explicit
commutative diagrams. This has increased the length, but in our
experience it is easy not to pay too much attention to diagrams.

\section{Monoidal categories}
\label{s:monoidal-categories}

We assume basic knowledge on monoidal categories (see \cite{MacL} for
details, or \cite{AguMah} for a technical account relevant to our
needs).

\subsection{Basic definitions}

We typically write $\I\in\Ob\cC$ for the unit object of a
monoidal category $\cC$, $\otimes:\cC\times\cC\rightarrow\cC$ for its
tensor, $\alpha_{X,Y,Z} : (X\otimes Y)\otimes Z\rightarrow
X\otimes(Y\otimes Z)$ for its associator and $\lambda_X : \I\otimes
X\rightarrow X$ and $\rho_X : X\otimes \I\rightarrow X$ for its
unitors. These natural transformations are subject to coherence
conditions.

We refer to the data $(\otimes, \I, \alpha, \lambda, \rho)$ as a
\emph{monoidal structure} on a category $\cC$.  A monoidal category is
\emph{strict} if $\alpha$, $\lambda$ and $\rho$ are identity natural
transformations.

We abuse language and notation in various ways. We frequently do not
mention $\alpha$, $\lambda$ and $\rho$ explicitly. If several monoidal
structures are present (with the same underlying category or not), we
use sub- and superscripts to distinguish them ad hoc. We may also be
vague about distinguishing a monoidal category from its underlying
category.

\subsection{Braided and symmetric monoid categories}

We further recall two different notions of commutativity for monoidal
categories\cite{MacL} -- we will find use for both.

A \emph{braided monoidal category} is a monoidal category equipped
with a natural isomorphism $\tau_{X,Y}:X\otimes Y\rightarrow
Y\otimes X$ such that the following diagrams commute:
\begin{center}
  \begin{tikzcd}
    (X\otimes Y)\otimes Z\ar[r, "\alpha"]\ar[d, "\tau\otimes1"']&
    X\otimes(Y\otimes Z)\ar[d, "\tau"]\\
    (Y\otimes X)\otimes Z\ar[d, "\alpha"']&
    (Y\otimes Z)\otimes X\ar[d, "\alpha"]\\
    Y\otimes (X\otimes Z)\ar[r, "1\otimes\tau"']&
    Y\otimes (Z\otimes X),
  \end{tikzcd}
  \hfil
  \begin{tikzcd}
    X\otimes(Y\otimes Z)\ar[r, "\alpha^{-1}"]\ar[d, "1\otimes\tau"']&
    (X\otimes Y)\otimes Z\ar[d, "\tau"]\\
    X\otimes(Z\otimes Y)\ar[d, "\alpha^{-1}"']&
    Z\otimes(X\otimes Y)\ar[d, "\alpha^{-1}"]\\
    (X\otimes Z)\otimes Y\ar[r, "\tau\otimes 1"']&
    (Z\otimes X)\otimes Y.
  \end{tikzcd}
\end{center}

A \emph{symmetric monoidal category} is a braided monoidal category
where the natural transformation $\tau_{Y,X}\tau_{X,Y}$ equals the
identity $1_{X\otimes Y}$.

\subsection{Monoidal functors}

There are several notions of functor between monoidal categories.
\begin{defn}
  Let $\cC,\cD$ be monoidal categories. A \emph{lax monoidal functor}
  $F$ from $\cC$ to $\cD$ is a functor $\cC\rightarrow\cD$ between the
  underlying categories together with a morphism
  $\varepsilon:\I_\cD\rightarrow F(\I_\cC)$ (the \emph{unit}) and a
  natural transformation
  $\mu_{X,Y}:FX\otimes_\cD FY\rightarrow F(X\otimes_\cC Y)$ (the
  \emph{multiplication}), such that the following diagrams commute:
  \begin{center}
    \begin{tikzcd}
      (FX\otimes_\cD FY)\otimes_\cD FZ \ar[r,"\alpha_{FX,FY,FZ}"]\ar[d,"\mu_{X,Y}\otimes1_{FZ}"']&
      FX\otimes_\cD (FY\otimes_\cD FZ) \ar[d,"1_{FX}\otimes\mu_{Y,Z}"]\\
      F(X\otimes_\cC Y)\otimes_\cD FZ\ar[d,"\mu_{X\otimes Y,Z}"']&
      FX\otimes_\cD F(Y\otimes_\cC Z)\ar[d,"\mu_{X,Y\otimes Z}"]\\
      F((X\otimes_\cC Y)\otimes_\cC Z)\ar[r,"F(\alpha_{X,Y,Z})"']&
      F(X\otimes_\cC (Y\otimes_\cC Z)),
    \end{tikzcd}
  \end{center}
  \begin{center}
    \begin{tikzcd}
      \I_\cD\otimes_\cD FX\ar[r, "\varepsilon\otimes 1_{FX}"]\ar[d,"\lambda_{FX}"']&
      F(\I_\cC)\otimes_\cD FX\ar[d,"\mu_{\I,X}"]\\
      FX&
      F(\I_\cC\otimes_\cC X),\ar[l, "F(\lambda_X)"]
    \end{tikzcd}
    \hfil
    \begin{tikzcd}
      FX\otimes_\cD \I_\cD\ar[r, "1_{FX}\otimes\varepsilon"]\ar[d,"\rho_{FX}"']&
      FX\otimes_\cD F(\I_\cC)\ar[d,"\mu_{X,\I}"]\\
      FX&
      F(X\otimes_\cC\I_\cC).\ar[l, "F(\rho_X)"]
    \end{tikzcd}
  \end{center}
\end{defn}

There is a dual notion, in which the unit $\varepsilon$ and the
multiplication $\mu$ go in the opposite direction.
\begin{defn}
  An \emph{oplax monoidal functor} from $\cC$ to $\cD$ consists of a
  functor $\cC\rightarrow\cD$ between the underlying categories,
  together with a morphism $\varepsilon:F(\I_\cC)\rightarrow\I_\cD$
  (the \emph{counit}) and a natural transformation
  $\mu_{X,Y}:F(X\otimes_\cC Y)\rightarrow FX\otimes_\cD FY$ (the
  \emph{comultiplication}), such that diagrams commute which are
  identical to those in the previous definition, but where all arrows
  featuring $\varepsilon$ and $\mu$ are reversed.
\end{defn}

An oplax monoidal functor $\cC\rightarrow\cD$ is therefore a lax monoidal
functor $\cC^\op\rightarrow\cD^\op$.

\begin{defn}
  A \emph{strong monoidal functor} from $\cC$ to $\cD$ is a lax
  monoidal functor where $\varepsilon$ and $\mu_{X,Y}$ are
  isomorphisms.
\end{defn}

As well as being a lax monoidal functor, it is quick to check that a
strong monoidal functor also gives an oplax monoidal functor (with
$\varepsilon$ and $\mu_{X,Y}$ replaced by their inverses): it is the
same thing as an oplax monoidal functor whose counit and
comultiplication are isomorphisms.

Composites of lax monoidal functors are lax monoidal functors, those
of oplax monoidal functors are oplax monoidal functors, and those of
strong monoidal functors are strong monoidal functors.

\subsection{Monoidal natural transformations}

Next, we recall monoidal natural transformations.
\begin{defn}
  If $\cC$ and $\cD$ are monoidal categories and
  $F,G:\cC\rightarrow\cD$ lax monoidal functors, then a \emph{monoidal
    natural transformation} $\Theta:F\Rightarrow G$ is a natural
  transformation such that the following diagrams commute:
  \begin{center}
    \begin{tikzcd}
      F(X)\otimes F(Y)\ar[r, "\Theta\otimes\Theta"]\ar[d,"\mu"']&G(X)\otimes G(Y)\ar[d,"\mu"]\\
      F(X\otimes Y)\ar[r,"\Theta"']&G(X\otimes Y),
    \end{tikzcd}
    \hfil
    \begin{tikzcd}
      &\I\ar[dl,"\varepsilon"']\ar[dr,"\varepsilon"]&\\
      F(\I)\ar[rr,"\Theta"']&&G(\I).
    \end{tikzcd}
  \end{center}
\end{defn}

There is an analogous definition of monoidal natural transformation
between oplax monoidal functors: the diagrams are the same except that
$\varepsilon$ and $\mu$ are reversed. There is only one definition of
monoidal natural transformation between strong monoidal functors, and
it does not matter whether we regard them as lax or as oplax.

\subsection{Monoid objects}

Monoidal categories allow us to talk about objects with algebraic and
in particular monoidal structure.
\begin{defn}
  A \emph{monoid object} in a monoidal category $(\cC,\otimes,\I)$ is a
  lax monoidal functor $1\rightarrow\cC$ from the terminal category
  $1$ to $\cC$; a \emph{comonoid object} is an oplax monoidal functor
  $1\rightarrow\cC$.
\end{defn}
This may seem underspecified, but it is easy to see that the terminal
category $1$ has a unique monoidal structure (there is only one choice
of objects and morphisms). Note also that strong monoidal functors
$1\rightarrow\cC$ are of little interest: they simply pick objects
isomorphic to $\I$.

Spelling out this definition, a monoid object consists of an object
$X$ and morphisms $m:X\otimes X\rightarrow X$ (the
\emph{multiplication}) and $u:\I\rightarrow X$ (the \emph{unit}) such
that the following diagrams commute:
\begin{center}
  \begin{tikzcd}
    (X\otimes X)\otimes X\ar[dd, "\alpha"']\ar[r, "m\otimes 1"] & X\otimes X\ar[d, "m"]\\
    &X\\
    X\otimes (X\otimes X)\ar[r, "1\otimes m"'] & X\otimes X,\ar[u, "m"']
  \end{tikzcd}
  \hfil
  \begin{tikzcd}
    \I\otimes X\ar[r, "u\otimes 1"]\ar[dr, "\lambda"']&X\otimes X\ar[d, "m"]&X\otimes\I.\ar[l, "1\otimes u"']\ar[dl, "\rho"]\\
    &X&
  \end{tikzcd}
\end{center}
A \emph{comonoid object} consists of an object $X$, together with
morphisms $X\rightarrow X\otimes X$ (the \emph{comultiplication}) and
$X\rightarrow\I$ (the \emph{counit}), such that similar diagrams
commute but with those featuring $m$ and $u$ reversed.

A simple consequence of the definition is that lax monoidal functors
preserve monoid objects (that is, given a monoid object
$1\rightarrow\cC$ and a lax monoidal functor $\cC\rightarrow\cD$,
their composite is a monoid object in $\cD$), whereas oplax monoidal
functors preserve comonoid objects.

Monoid objects in a monoidal category themselves form a category
(where the morphisms are monoidal natural transformations of functors
$1\rightarrow\cC$). Comonoid objects form a category similarly.

\begin{defn}
  A monoid object $M$ in a symmetric monoidal category  $\cC$ is
  \emph{commutative}  if the diagram
  \begin{center}
    \begin{tikzcd}
      M\otimes M\ar[rr, "\tau"]\ar[dr, "m"']&&M\otimes M\ar[dl, "m"]\\
      &M&
    \end{tikzcd}
  \end{center}
  commutes. Similarly, we may define a \emph{cocommutative} comonoid
  object in such a category, where we reverse the arrows labelled $m$.
\end{defn}


\section{Examples of monoidal categories}
\label{s:examples-monoidal}

\subsection{Cartesian monoidal structure}

We use some basic examples of monoidal categories and functors between
them repeatedly. The following routine construction saves some time.
\begin{example}
  Any category with finite products has the \emph{cartesian monoidal
    structure} with $\otimes=\times$ as product and $\I=1$ as terminal
  object (the empty product).

  Dually, any category with finite coproducts has the
  \emph{cocartesian monoidal structure} with $\otimes=\sqcup$ as
  coproduct and $\I=0$ as initial object (the empty coproduct).
\end{example}
Any functor that preserves finite products extends to a strong
monoidal functor between the cartesian monoidal structures; dually,
any functor that preserves finite coproducts extends to a strong
monoidal functor between the cocartesian monoidal structures.

\begin{example}~ The category $\Set$ of sets and functions has a
  cartesian monoidal structure with $\otimes=\times$ and $\I=1$ (any
  singleton set) and a cocartesian monoidal structure with
  $\otimes=\sqcup$ (the disjoint union of sets) and $\I=\emptyset$. A
  monoid object for the cartesian structure on $\Set$ is just an
  ordinary monoid.
\end{example}

\begin{example}~ The same holds of the category $\Set_*$ of sets and
  partial functions. As the faithful functor $\Set\rightarrow\Set_*$
  preserves finite products and coproducts, it induces a strong
  monoidal functor on both the cartesian and cocartesian monoidal
  structures.

A monoid object for the the cartesian structure on $\Set_*$ is a
\emph{partial monoid}: one where the product is defined only on a
subset of $M\times M$, such that
\begin{itemize}
\item $1a$ and $a1$ are always defined and equal to $a$, and
\item $(ab)c$ is defined if and only if $a(bc)$ is, and whenever
  either is defined they are equal.
\end{itemize}
\end{example}

\subsection{Relational monoids}

The category $\Rel$ of sets and relations has finite products and
coproducts. Both coincide with the disjoint union of sets, so its
cartesian and cocartesian monoidal structures agree. It has another
monoidal structure, the \emph{tensor} monoidal structure, given on
objects by the cartesian product of sets, and on morphisms by
$((x,v),(y,w))\in R\times S$ if and only if $(x,y)\in R$ and $(v,w)\in
S$. The functor $\Set_*\rightarrow\Rel$ sending a partial function to
its graph is then strong monoidal.

A monoid in $\Rel$ (with the tensor monoidal structure) -- a
\emph{relational monoid} -- consists of a set $X$, a ``unit'' relation
from $1$ to $X$ (we write $U^x$ if the unique element of $1$ is
related to $x$) and a ternary ``multiplication'' relation, thought of
as a relation from $X\times X$ to $X$ (we write $M^x_{yz}$ if $(y,z)$
is related to $x$). These data are subject to the relational unitality
conditions
\[\bigvee_{a\in X}(U^a\wedge M^x_{ay}) \Longleftrightarrow \bigvee_{a\in
  X}(M^x_{ya}\wedge U^a) \Longleftrightarrow x=y\] for all $x,y\in X$,
and the relational associativity condition
\[\bigvee_{a\in X}(M^w_{xa}\wedge M^a_{yz}) \Longleftrightarrow \bigvee_{b\in X}(M^b_{xy}\wedge M^w_{bz})\]
for all $w,x,y,z\in X$, as considered, for instance, in
\cites{KenPar,Rosenthal}.

Here, the notational choice of whether to write indices as subscripts
or superscripts has not been made arbitrarily: our algebraic
operations, when regarded as relations, have variables corresponding
to inputs as subscripts and those corresponding to outputs as
superscripts. This choice evokes the Einstein summation from
physics. All sums are quantified over variables that appear once as an
output and once as an input, and the semantics of this summation is
given by composition.

The category $\Rel$ is symmetric monoidal; a commutative monoid object
(a \emph{commutative relational monoid}), satisfies the additional
relation $M_{xy}^z \Longleftrightarrow M_{yx}^z$ for all $x,y,z\in X$.

If $X$ is a relational monoid with multiplication $M$ and unit $U$,
and $Y$ a relational monoid with multiplication $N$ and unit $V$,
then a \emph{homomorphism} $X\rightarrow Y$ of relational monoids is a
relation $F$ from $X$ to $Y$ which is compatible with multiplication
in the sense that
\[\bigvee_{c\in X}(M_{ab}^c\wedge F_c^z)\Longleftrightarrow \bigvee_{x,y\in Y}(F_a^x\wedge F_b^y\wedge N_{xy}^z),\]
and compatible with units in the sense that
\[\bigvee_{a\in X}(U^a\wedge F_a^x)\Longleftrightarrow V^x.\]
This agrees with the definition in \cite{Rosenthal}*{Definition 1.5}.

If $F$ is the graph of a function $f$, so that $F_a^x$ if and only if
$x=f(a)$, then the conditions reduce to
\[\bigvee_{f(c)=z}M_{ab}^c\Longleftrightarrow
  N_{f(a)f(b)}^z\qquad\text{ and }\qquad
  \bigvee_{f(a)=x}U^a\Longleftrightarrow V^x.\]

There are functors $\Set\rightarrow\Set_*\rightarrow\Rel$, which in
turn regard total functions as a subclass of partial functions, and
partial functions as a subclass of relations (via their graphs). These
compare the monoid objects in these three categories: a monoid is a
special case of a partial monoid, which is in turn a special case of a
relational monoid.

\begin{example}
\label{ex:shuffle-relational}
A well-studied example of a commutative relational monoid is given by
shuffles of words. Let $A$ be a set (the \emph{alphabet}), and let
$M=A^*$ be the free monoid on $A$ (the set of \emph{words} on
$A$).

The ternary \emph{shuffle} relation $\shuffle$ on $M$, which we write
as a multioperation $M\times M\rightarrow\bP M$, is defined
recursively by considering when either argument is the empty word $e$
or when both words are nonempty with first letters $x$ and $y$:
\begin{align*}
  u\shuffle e = e\shuffle u &= \{u\},\\
  xu\shuffle yv &= \{xw\mid w\in u\shuffle yv\} \cup \{yw\mid w\in xu\shuffle v\}.
\end{align*}
This ternary relation, with the subset of units containing the empty
word only, gives $M$ indeed the structure of a commutative relational
monoid (with a single unit). We will return to this example in
Sections~\ref{ss:relational} and \ref{ss:asb} below.
\end{example}

\begin{remark}
Examples of relational monoids and related structures can be found in
the semantics of substructural logics. Kripke frames based on ternary
relations are used for instance in relevance
logics~\cite{DunnRestall02}, the Lambek calculus~\cite{Lambek58},
categorical logics~\cite{MootR12} or linear
logic~\cite{AllweinD93}. More generally, $(n+1)$-ary relational
structures provide semantics for boolean algbras with $n$-ary (modal)
operators~\cite{JonTar}, of which the ternary Kripke frames mentioned
form instances. In this context, functional notions of homomorphisms
between relational structures are used: so-called \emph{$p$-morphisms}
or \emph{bounded homomorphisms}~\cite{Segerberg,Goldblatt}. Bounded
homomorphisms are the appropriate notion of morphism in the category
of ternary relational structures that is dual to the category of
boolean algebras with binary operators~\cite{Goldblatt}. Rosenthal as
proved essentially the same duality between relational monoids on $X$
and categories of powerset quantales on $X$ using relational
homomorphisms~\cite{Rosenthal}.  However, the functional
specialisation of the relational notion of homomorphism between
relational monoids is neither a special case nor a generalisation of
the notion of bounded homomorphism.
\end{remark}

\section{$n$-fold monoidal categories}
\label{s:n-fold-monoidal}

We define $n$-fold monoidal categories as categories with $n$
different interacting mo\-noidal structures. These are roughly as
introduced in \cite{BFSV}, but we require extra generality in two
ways: in our examples we cannot ensure the same level of strictness,
and we wish to allow different units for the $n$ different
structures. This concept of $n$-fold monoidal category was also
introduced, for different purposes, in \cite{AguMah}.

\subsection{Double monoidal categories}

We start by defining double monoidal categories, since these will come
up most commonly, and form a good building block for the general
construction.
\begin{defn}
  A \emph{double monoidal category} is a category $\cC$ with two
  monoidal structures $(\otimes, \I, \alpha, \lambda, \rho)$ and
  $(\odot, \1, a, l, r)$, such that the latter consists of lax
  monoidal functors and monoidal natural transformations with respect
  to the former.
\end{defn}

We spell out this definition in terms of commutative diagrams. For
$\odot:\cC^2\rightarrow\cC$ to be lax means that there is a map
\begin{equation*}
\chi^{W,X}_{Y,Z}:(W\odot X)\otimes(Y\odot Z)\longrightarrow(W\otimes
Y)\odot(X\otimes Z),
\end{equation*}
the multiplication, and a map $\zeta:\I\rightarrow\I\odot\I$, the
unit.

These make the usual three diagrams for a lax monoidal functor
commute:
\begin{equation}\tag{D1}
  \begin{tikzcd}
    ((U\odot V)\otimes(W\odot X))\otimes(Y\odot Z)\ar[r,"\alpha"]\ar[d,"\chi\otimes 1"']&
    (U\odot V)\otimes((W\odot X)\otimes(Y\odot Z))\ar[d,"1\otimes\chi"]\\
    ((U\otimes W)\odot(V\otimes X))\otimes(Y\odot Z)\ar[d,"\chi"']&
    (U\odot V)\otimes((W\otimes Y)\odot(X\otimes Z))\ar[d,"\chi"]\\
    ((U\otimes W)\otimes Y)\odot((V\otimes X)\otimes Z)\ar[r,"\alpha\odot\alpha"']&
    (U\otimes(W\otimes Y))\odot(V\otimes(X\otimes Z)),
  \end{tikzcd}
\end{equation}
\begin{equation}\tag{D2}
  \begin{tikzcd}
    \I\otimes(X\odot Y)\ar[r, "\zeta\odot 1"]\ar[d,"\lambda"']&
    (\I\odot\I)\otimes(X\odot Y)\ar[d,"\chi"]\\
    X\odot Y&
    (\I\otimes X)\odot (\I\otimes Y),\ar[l, "\lambda\odot\lambda"]
  \end{tikzcd}
\end{equation}
\begin{equation}\tag{D3}
  \begin{tikzcd}
    (X\odot Y)\otimes\I\ar[r, "1\odot\zeta"]\ar[d,"\rho"']&
    (X\odot Y)\otimes(\I\odot\I)\ar[d,"\chi"]\\
    X\odot Y&
    (X\otimes\I)\odot(Y\otimes\I).\ar[l, "\rho\odot\rho"]
  \end{tikzcd}
\end{equation}

For $\1:1\rightarrow\cC$ to be lax means that there are maps
$\nu:\1\otimes\1\rightarrow\1$ (the multiplication) and
$\iota:\I\rightarrow\1$ (the unit). These are such that the usual
three diagrams commute. In this case this reduces to the following:
\begin{equation}\tag{D4}
  \begin{tikzcd}
    (\1\otimes\1)\otimes\1\ar[rr,"\alpha"]\ar[d,"\nu\otimes 1"']&&
    \1\otimes(\1\otimes\1)\ar[d,"1\otimes\nu"]\\
    \1\otimes\1\ar[r,"\nu"']&
    \1&
    \1\otimes\1,\ar[l,"\nu"]\\
  \end{tikzcd}
\end{equation}
\begin{equation}\tag{D5, 6}
  \begin{tikzcd}
    \I\otimes\1\ar[rr,"\iota\odot 1"]\ar[dr, "\lambda"']&&
    \1\otimes\1,\ar[dl,"\nu"]\\
    &\1
  \end{tikzcd}
  \hfil
  \begin{tikzcd}
    \1\otimes\I\ar[rr,"1\odot \iota"]\ar[dr, "\rho"']&&
    \1\otimes\1.\ar[dl,"\nu"]\\
    &\1
  \end{tikzcd}
\end{equation}

For $l$ to be a monoidal natural transformation, the following two
diagrams must commute:
\begin{equation}\tag{D7, 8}
  \begin{tikzcd}
    (\1\otimes\1)\odot(X\otimes Y)\ar[d,"\nu\odot1"']&(\1\odot X)\otimes(\1\odot Y)\ar[l, "\chi"']\ar[d,"l\odot l"]\\
    \1\odot(X\otimes Y)\ar[r,"l"']&X\otimes Y,
  \end{tikzcd}
  \hfil
  \begin{tikzcd}
    \I\odot\I\ar[d,"\iota\odot1"']&\I\ar[l,"\zeta"']\ar[d,"1"]\\
    \1\odot\I\ar[r,"l"']&\I.
  \end{tikzcd}
\end{equation}
This uses the lax monoidal structure on the functor
$\1\odot-:\cC\rightarrow\cC$, which is the composite of $\odot$
with $\1\times 1$.

Similarly, for $r$ to be monoidal, the following diagrams must
commute:
\begin{equation}\tag{D9, 10}
  \begin{tikzcd}
    (X\otimes Y)\odot(\1\otimes\1)\ar[d,"1\odot\nu"']&(X\odot\1)\otimes(Y\odot\1)\ar[l, "\chi"']\ar[d,"r\otimes r"]\\
    (X\otimes Y)\odot\1\ar[r,"r"']&X\otimes Y,
  \end{tikzcd}
  \hfil
  \begin{tikzcd}
    \I\odot\I\ar[d,"1\odot\iota"']&\I\ar[l,"\zeta"']\ar[d,"1"]\\
    \I\odot\1\ar[r,"r"']&\I.
  \end{tikzcd}
\end{equation}

Lastly, for $a$ to be monoidal, the following diagrams must commute:
\begin{equation}\tag{D11}
  \begin{tikzcd}
    ((U\odot V)\odot W)\otimes((X\odot Y)\odot Z)\ar[r,"a\otimes a"]\ar[d,"\chi"']&
    (U\odot(V\odot W))\otimes(X\odot(Y\odot Z))\ar[d,"\chi"]\\
    ((U\odot V)\otimes(X\odot Y))\odot(W\otimes Z)\ar[d,"\chi\odot1"']&
    (U\otimes X)\odot((V\odot W)\otimes (Y\odot Z))\ar[d,"1\odot\chi"]\\
    ((U\otimes X)\odot(V\otimes Y))\odot(W\otimes Z)\ar[r,"a"']&
    (U\otimes X)\odot((V\otimes Y)\odot(W\otimes Z)),
  \end{tikzcd}
\end{equation}
\begin{equation}\tag{D12}
  \begin{tikzcd}
    \I\odot\I\ar[d,"\zeta\odot1"']&\I\ar[l,"\zeta"']\ar[r,"\zeta"]&\I\odot\I\ar[d,"1\odot\zeta"]\\
    (\I\odot\I)\odot\I\ar[rr,"a"']&&\I\odot(\I\odot\I).
  \end{tikzcd}
\end{equation}

These use the monoidal structures on the two functors
$\cC^3\rightarrow\cC$ sending $(X,Y,Z)$ to $(X\odot Y)\odot Z$ and
to $X\odot(Y\odot Z)$.

\begin{remark}
  The diagrams D1 and D11 are elaborations of the diagrams from the
  definition of a $2$-fold monoidal category (in the strictly
  associative and unital setting) in \cite{BFSV}*{Definition 1.3}.
\end{remark}

There is a duality at work: each of the twelve diagrams above has a
companion where the roles of the two monoidal structures are
exchanged. More specifically, this duality exchanges D1 with D11, D2
with D7, D3 with D9, D4 with D12, D5 with D7 and D6 with D10. This
leads to an alternative, equivalent definition of a double monoidal
category as a category with a pair of monoidal structures such that
$(\otimes, \I, \alpha, \lambda, \rho)$ consists of oplax monoidal
functors and monoidal natural transformations with respect to
$(\odot, \1, a, l, r)$.

We refer to these diagrams, which relate two different monoidal
structures, jointly as \emph{double interchange diagrams}.

\begin{remark}
  In many examples below, the units $\1$ and $\I$ agree, $\iota$ is
  the identity and $\zeta$ and $\nu$ are unit isomorphisms. Most of
  the diagrams above then commute automatically; only D1 and D11 need
  checking.
\end{remark}

\begin{remark}
  It can be read off the definition that $\1$ is a monoid object in
  $(\cC,\otimes,\I)$ and $\I$ is a comonoid object in
  $(\cC,\odot,\1)$.
\end{remark}

\begin{remark}
  Aguiar and Mahajan \cite{AguMah}*{Definition 6.1} give an equivalent
  definition, somewhat differently motivated and presented. Their
  (6.3) is our D1, their (6.4) our D11, their (6.5) our D2/D3, their
  (6.6) our D7/D9, their (6.7) our D4/D5/D6, and their (6.8) our
  D8/D10/D12.
\end{remark}

\begin{remark}
  The four structure maps share a pattern that is useful for
  explaining constructions below. All of them are maps
\[\bigotimes_{a\in A}\bigodot_{b\in B}X_{ab}\longrightarrow\bigodot_{b\in B}\bigotimes_{a\in A}X_{ab}.\]
For $\chi$, we have $|A|=|B|=2$; for $\zeta$, we have $A=\emptyset$
and $|B|=2$; for $\nu$, we have $|A|=2$ and $B=\emptyset$; for
$\iota$, we have $A=B=\emptyset$.
\end{remark}

The four structure maps are not entirely independent:
\begin{prop}
   The map $\iota$ is determined by $\chi$ as the following composite:
\[\I\longrightarrow\I\otimes\I\longrightarrow(\I\odot\1)\otimes(\1\odot\I)\stackrel{\chi}{\longrightarrow}(\I\otimes\1)\odot(\1\otimes\I)\longrightarrow\1\odot\1\longrightarrow\1.\]
\end{prop}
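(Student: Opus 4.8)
The plan is to chase the composite through the defining diagrams of a double monoidal category. We need to show that the prescribed composite
\[
\I\longrightarrow\I\otimes\I\longrightarrow(\I\odot\1)\otimes(\1\odot\I)\stackrel{\chi}{\longrightarrow}(\I\otimes\1)\odot(\1\otimes\I)\longrightarrow\1\odot\1\longrightarrow\1
\]
equals $\iota:\I\to\1$, where the unlabelled maps are built from the unitors $\rho,\lambda$ of $\otimes$ (and their inverses) and from $l$ for the $\odot$-unitor on the penultimate arrow. The key resources are the coherence conditions D7/D8 (for $l$ monoidal), D9/D10 (for $r$ monoidal) and the elementary diagrams D5, D6 that express how $\nu$ interacts with $\iota$.

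First I would simplify the composite using naturality of $\chi$ in each variable, to reduce the instance of $\chi$ appearing here to one of the degenerate structure maps. Concretely, the middle object $(\I\odot\1)\otimes(\1\odot\I)$ is obtained from $\I\otimes\I$ by applying the unit isomorphisms $\I\xrightarrow{\sim}\I\odot\1$ (which is $r^{-1}$ on $\I$, a $\odot$-unit map) and $\I\xrightarrow{\sim}\1\odot\I$ (which is $l^{-1}$ on $\I$), so by naturality of $\chi$ the composite factors through $\chi^{\I,\I}_{\1,\1}$ followed by the map to $(\I\otimes\1)\odot(\1\otimes\I)$; but one then wants instead to push $\chi$ to the \emph{left} and recognise that, with two of the four inputs equal to units, $\chi$ collapses. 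The cleanest route is: use D8 (the unit half of "$l$ is monoidal") and D10 (the unit half of "$r$ is monoidal"), both of which directly relate $\chi$ applied to unit objects with $\zeta$, $\iota$ and the $\odot$-unitors. So the second step is to rewrite the portion $\I\otimes\I\to(\I\odot\1)\otimes(\1\odot\I)\xrightarrow{\chi}(\I\otimes\1)\odot(\1\otimes\I)$ as a composite involving $\zeta$ and the appropriate unitors, then contract $(\I\otimes\1)\odot(\1\otimes\I)\to\1\odot\1\to\1$ using D5 and D6 and the $\odot$-unitor $l$ (or $r$), leaving exactly $\iota\circ(\text{something isomorphic to }1_\I)$, which is $\iota$ by coherence for the $\otimes$-unitors.

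Alternatively — and this may be the shortest write-up — I would appeal to the observation recorded in the excerpt that $\I$ is a comonoid object in $(\cC,\odot,\1)$ and $\1$ is a monoid object in $(\cC,\otimes,\I)$, and that the counit $\I\to\1$ of the comonoid, resp. the unit $\I\to\1$ of the monoid, is $\iota$ in both cases. The composite in the statement is precisely the canonical map one gets by transporting along the lax structure $\chi$: it is the value on $\I$ of the comparison making $\1\odot-$ lax monoidal, evaluated so that the only non-unit data is $\chi$. Spelling this out, the composite is $l\odot$-counit precomposed with $\chi$ precomposed with (comonoid comultiplication on $\I$, i.e. $\zeta$) — and D7, D8 say exactly that this equals the counit, which is $\iota$.

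The main obstacle I expect is bookkeeping: keeping straight which unlabelled arrows are $\rho$ versus $\lambda$ versus their inverses, and on which object, and likewise disambiguating the two "$l$"s (the $\odot$-unitor $l$ versus its occurrences inside D7/D8). There is no conceptual difficulty — every step is an instance of naturality of $\chi$, of the triangle/pentagon coherence for one of the two monoidal structures, or of one of D5–D10 — but a careless diagram chase can easily produce a map that differs from $\iota$ by a unit isomorphism that one then has to argue is the identity. I would therefore organise the proof as a single large commuting diagram with the five arrows of the given composite along the top, D8 and D10 filling the upper region, D5/D6 the lower region, and $\otimes$-coherence the remaining triangles, and simply assert commutativity of each cell with a reference to the relevant axiom.
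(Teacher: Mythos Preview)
Your overall strategy --- a single diagram chase using naturality of $\chi$ together with the double-monoidal axioms --- is correct and is what the paper does. But you have misidentified the axioms that do the work. You repeatedly invoke D8 and D10 as the diagrams that ``directly relate $\chi$ applied to unit objects with $\zeta$, $\iota$ and the $\odot$-unitors''; in fact D8 and D10 do not mention $\chi$ at all. They are the \emph{unit} halves of the conditions that $l$ and $r$ be monoidal natural transformations, and involve only $\zeta$, $\iota$, $l$, $r$. The diagrams that relate $\chi$ to the $\odot$-unitors are the \emph{multiplication} halves, namely D7 and D9. Consequently your proposed ``second step'', rewriting the middle of the composite so as to involve $\zeta$, is a detour: $\zeta$ plays no role here, and introducing it via D8/D10 would oblige you to eliminate it again using further axioms (D2 or D3) that you never invoke. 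As written, the plan does not close.

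The paper's argument is short and uses exactly three ingredients. First, naturality of $\chi$ in its last variable (applied to $\iota:\I\to\1$) replaces $\chi^{\I,\1}_{\1,\I}$ by $\chi^{\I,\1}_{\1,\1}$. Second, this new instance of $\chi$ sits precisely inside D9 instantiated at $X=\I$, $Y=\1$, which collapses it against the $\odot$-unitor $r$. Third, one small triangle --- an instance of D6, applied in the second $\odot$-factor, giving $\nu\circ(1\otimes\iota)=\rho$ on $\1\otimes\I$ --- finishes things off; the remaining cells are pure unitor coherence. So the key cell is D9, not D8 or D10.

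Your alternative suggestion, appealing to the fact that $\I$ is a comonoid and $\1$ a monoid, does not shorten anything: unpacking that observation \emph{is} the diagram chase above.
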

\begin{proof}
The following commutative diagram shows this (for simplicity, we trim
the morphism labels to remove identities, associators and
unitors).

The central square is an instance of D9; the top quadrilateral a
naturality square for $\chi$, and the thin triangle on the right an
instance of D6:
\begin{displaymath}
  \begin{tikzcd}
    & (\I\odot\1)\otimes(\1\odot\I)\ar[rr, "\chi"]\ar[d, "\iota"'] && (\I\otimes\1)\odot(\1\otimes\I)\ar[dl, "\iota"]\ar[dd]\ar[ddl]\\
    \I\otimes\I\ar[ur] & (\I\odot\1)\otimes(\1\odot\1)\ar[r, "\chi"]\ar[d] & (\I\otimes\1)\odot(\1\otimes\1)\ar[d] &\\
    & \I\otimes\1 & (\I\otimes\1)\odot\1\ar[l]\ar[dr] & \1\odot\1\ar[d]\\
    \I\ar[uu]\ar[ur]\ar[rrr, "\iota"'] &&& \1
  \end{tikzcd}
\end{displaymath}
\end{proof}

We put off giving examples until Section \ref{s:examples-nfold}, since
the majority of examples of double monoidal categories are in fact
$n$-fold monoidal categories, a concept which we now go on to define.

\subsection{$n$-fold monoidal categories}

Now we define our main object of interest: categories with more than
two interacting monoidal structures.

\begin{defn}
  An \emph{$n$-fold monoidal category} is a category $\cC$ equipped with:
  \begin{itemize}
  \item $n$ different monoidal structures
    $(\otimes_i,\I_i,\alpha_i,\lambda_i,\rho_i)$ for each $1\leq
    i\leq n$;
  \item a double monoidal structure relating the $i$th and the $j$th
    monoidal structure for each $1\leq i< j\leq n$, giving maps
    \begin{itemize}
    \item $\chi_{ij}:(W\otimes_jX)\otimes_i(Y\otimes_jZ) \rightarrow
      (W\otimes_iY)\otimes_j(X\otimes_iZ)$,
    \item $\zeta_{ij}:\I_i\rightarrow\I_i\otimes_j\I_i$,
    \item $\nu_{ij}:\I_j\otimes_i\I_j\rightarrow\I_j$,
    \item $\iota_{ij}:\I_i\rightarrow\I_j$;
    \end{itemize}
  \end{itemize}
  such that, for every $1\leq i<j<k\leq n$, the natural
  transformations $\chi_{jk}$, $\zeta_{jk}$, $\nu_{jk}$ and
  $\iota_{jk}$ are monoidal natural transformations (between lax
  monoidal functors) with respect to $\otimes_i$.
\end{defn}

This definition can be expanded to say that, as well as the double
interchange diagrams holding, the following eight diagrams (the
\emph{triple interchange diagrams}) all commute for every $1\leq
i<j<k\leq n$:
  \begin{equation}\tag{T1}
    \begin{tikzcd} 
      \begin{tabular}{C}
        ((S\otimes_kT)\otimes_j(U\otimes_kV))\\
        \otimes_i\\
        ((W\otimes_kX)\otimes_j(Y\otimes_kZ))
      \end{tabular}
      \ar[r, "\chi_{jk}\otimes_i\chi_{jk}"]
      \ar[d, "\chi_{ij}"']&
      \begin{tabular}{C}
        ((S\otimes_jU)\otimes_k(T\otimes_jV))\\
        \otimes_i\\
        ((W\otimes_jY)\otimes_k(X\otimes_jZ))
      \end{tabular}
      \ar[d, "\chi_{ik}"]\\
      \begin{tabular}{C}
        ((S\otimes_kT)\otimes_i(W\otimes_kX))\\
        \otimes_j\\
        ((U\otimes_kV)\otimes_i(Y\otimes_kZ))
      \end{tabular}
      \ar[d, "\chi_{ik}\otimes_j\chi_{ik}"']&
      \begin{tabular}{C}
        ((S\otimes_jU)\otimes_i(W\otimes_jY))\\
        \otimes_k\\
        ((T\otimes_jV)\otimes_i(X\otimes_jZ))
      \end{tabular}
      \ar[d, "\chi_{ij}\otimes_k\chi_{ij}"]\\
      \begin{tabular}{C}
        ((S\otimes_iW)\otimes_k(T\otimes_iX))\\
        \otimes_j\\
        ((U\otimes_iY)\otimes_k(V\otimes_iZ))
      \end{tabular}
      \ar[r, "\chi_{jk}"']
      &
      \begin{tabular}{C}
        ((S\otimes_iW)\otimes_j(U\otimes_iY))\\
        \otimes_k\\
        ((T\otimes_iX)\otimes_j(V\otimes_iZ)),
      \end{tabular}
    \end{tikzcd}
  \end{equation}
  \begin{equation}\tag{T2}
    \begin{tikzcd} 
      (\I_k\otimes_j\I_k)\otimes_i(\I_k\otimes_j\I_k)
      \ar[d, "\nu_{jk}\otimes_i\nu_{jk}"']
      \ar[rr, "\chi_{ij}"]&&
      (\I_k\otimes_i\I_k)\otimes_j(\I_k\otimes_i\I_k)
      \ar[d, "\nu_{ik}\otimes_j\nu_{ik}"]\\
      \I_k\otimes_i\I_k
      \ar[r, "\nu_{ik}"']&
      \I_k&
      \I_k\otimes_j\I_k,
      \ar[l, "\nu_{jk}"]
    \end{tikzcd}
  \end{equation}
  \begin{equation}\tag{T3}
    \begin{tikzcd} 
      \I_i\otimes_j\I_i
      \ar[d, "\zeta_{ik}\otimes_j\zeta_{ik}"'] &
      \I_i
      \ar[l, "\zeta_{ij}"']
      \ar[r, "\zeta_{ik}"]&
      \I_i\otimes_k\I_i
      \ar[d, "\zeta_{ij}\otimes_k\zeta_{ij}"]\\
      (\I_i\otimes_k\I_i)\otimes_j(\I_i\otimes_k\I_i)
      \ar[rr, "\chi_{jk}"']&&
      (\I_i\otimes_j\I_i)\otimes_k(\I_i\otimes_j\I_i),
    \end{tikzcd}
  \end{equation}
  \begin{equation}\tag{T4}
    \begin{tikzcd} 
      (\I_j\otimes_k\I_j)\otimes_i(\I_j\otimes_k\I_j)
      \ar[rr, "\chi_{ik}"]&&
      (\I_j\otimes_i\I_j)\otimes_k(\I_j\otimes_i\I_j)
      \ar[d, "\nu_{ij}\otimes_k\nu_{ij}"]\\
      \I_j\otimes_i\I_j
      \ar[u, "\zeta_{jk}\otimes_i\zeta_{jk}"]
      \ar[r, "\nu_{ij}"']&
      \I_j
      \ar[r, "\zeta_{jk}"']&
      \I_j\otimes_k\I_j,
    \end{tikzcd}
  \end{equation}
  \begin{equation}\tag{T5, 6}
    \begin{tikzcd} 
      \I_j\otimes_i\I_j
      \ar[r, "\iota_{jk}\otimes_i\iota_{jk}"]
      \ar[d, "\nu_{ij}"']&
      \I_k\otimes_i\I_k
      \ar[d, "\nu_{ik}"]\\
      \I_j
      \ar[r, "\iota_{jk}"']&
      \I_k,
    \end{tikzcd}
    \hfil
    \begin{tikzcd} 
      \I_i
      \ar[r, "\iota_{ij}"]
      \ar[d, "\zeta_{ik}"']&
      \I_j
      \ar[d, "\zeta_{jk}"]\\
      \I_i\otimes_k\I_i
      \ar[r, "\iota_{ij}\otimes_k\iota_{ij}"']&
      \I_j\otimes_k\I_j,
    \end{tikzcd}
  \end{equation}
  \begin{equation}\tag{T7, 8}
    \begin{tikzcd} 
      \I_i
      \ar[r, "\zeta_{ij}"]
      \ar[d, "\iota_{ik}"']&
      \I_i\otimes_j\I_i
      \ar[d, "\iota_{ik}\otimes_j\iota_{ik}"]\\
      \I_k&
      \I_k\otimes_j\I_k,
      \ar[l, "\nu_{jk}"]
    \end{tikzcd}
    \hfil
    \begin{tikzcd} 
      &\I_j\ar[dr, "\iota_{jk}"]&\\
      \I_i\ar[ur, "\iota_{ij}"]\ar[rr, "\iota_{ik}"']&&\I_k.
    \end{tikzcd}
  \end{equation}
  Of these, (T1) and (T3) express that the natural transformation $\chi_{jk}$ is
  monoidal, (T4) and (T6) that $\zeta_{jk}$ is monoidal, (T2) and (T7)
  that $\nu_{jk}$ is monoidal, and (T5) and (T8) that $\iota_{jk}$ is
  monoidal.

\begin{remark}
  Diagram T1 is exactly the diagram seen in the definition of an
  $n$-fold monoidal category (in the strictly associative and unital
  setting) seen in \cite{BFSV}*{Definition 1.7}.
\end{remark}

\begin{remark}
  Our definition of $n$-fold monoidal categories is equivalent to
  Aguiar and Mahajan's \cite{AguMah}*{Definitions 7.1, 7.24}: their
  (7.3) is our T1, their (7.4) our T3, their (7.5) our T4, their (7.6)
  our T2, their (7.7) our T6/T5/T7 and their (7.8) our T8.
\end{remark}

\begin{remark}
There is a pattern to these eight diagrams: each is a simplified
hexagon providing an equality between two maps
\[\bigotimes^{(i)}_{a\in A}\bigotimes^{(j)}_{b\in B}\bigotimes^{(k)}_{c\in C}X_{abc}
    \longrightarrow
  \bigotimes_{c\in C}^{(k)}\bigotimes_{b\in B}^{(j)}\bigotimes_{a\in A}^{(i)}X_{abc},\]
resulting from the two different ways of reversing the order of the
tensors by swapping adjacents, two at a time.

These eight diagrams state that, given a $k$-product of a family of
$j$-products of families of $i$-products, the results are the same no
matter in which order we reverse these three. There is one diagram
reflecting each possible choice of $\otimes_i$ or $\I_i$, $\otimes_j$
or $\I_j$, and $\otimes_k$ or $\I_k$. Only if we choose the
multiplication in all three cases do we get a diagram with any
variables in it!
\end{remark}

\begin{remark}
  Alternatively, we could have defined an $n$-fold monoidal category
  by requiring that, for each $1\leq i<j<k\leq n$, $\chi_{ij}$,
  $\zeta_{ij}$, $\nu_{ij}$ and $\iota_{ij}$ are monoidal natural
  transformations (between oplax monoidal functors) with respect to
  $\otimes_k$. Then (T1) and (T2) express that $\chi_{ij}$ is
  monoidal, (T3) and (T7)  that $\zeta_{ij}$ is monoidal, (T4)
  and (T5) that $\nu_{ij}$ is monoidal, and (T6) and (T8)
   that $\iota_{ij}$ is monoidal.
\end{remark}

\begin{remark}
  It is clear that an $n$-fold monoidal category with $n=1$ is a
  monoidal category; with $n=2$ it is a double monoidal category.
\end{remark}

\begin{remark}
  Monoidal structures sometimes share units, and the maps $\iota$,
  $\zeta$ and $\nu$ are sometimes the obvious isomorphisms. Where this
  happens, only D1 and D11 need to be checked for each pair of
  monoidal structures and T1 is the only triple interchange diagram
  that is not automatic.
\end{remark}

\begin{conj}
  We conjecture that there may be a coherence theorem, à la Mac Lane,
  for this definition. Presumably it should say that all maps
\[\bigotimes^{(1)}_{a_1\in A_1}\cdots\bigotimes^{(n)}_{a_n\in A_n}X_{a_1,\ldots,a_n}
  \longrightarrow
  \bigotimes^{(n)}_{a_n\in A_n}\cdots\bigotimes^{(1)}_{a_1\in A_1}X_{a_1,\ldots,a_n}\]
defined as composites of associators, unitors, and interchange
morphisms are equal.
\end{conj}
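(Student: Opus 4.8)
The plan is to deduce the coherence statement from a confluence (Newman's lemma) argument for a rewriting system of formal expressions, after first disposing of the separately-understood coherence of each individual monoidal structure. Form the free $n$-fold monoidal category $\mathcal{F}$ on a countable set of generating objects: its objects are the formal words in $\otimes_1,\dots,\otimes_n$ and $\I_1,\dots,\I_n$, and its morphisms are formal composites and $\otimes_i$-products of formal associators and unitors $\alpha_i^{\pm1},\lambda_i^{\pm1},\rho_i^{\pm1}$ and of the four families of interchange maps $\chi_{ij},\zeta_{ij},\nu_{ij},\iota_{ij}$, modulo the coherence axioms of each $\otimes_i$ together with D1--D12 and T1--T8. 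By the universal property of $\mathcal{F}$, the conjecture is equivalent to the assertion that, for the two specific words $A=\bigotimes^{(1)}_{a_1\in A_1}\cdots\bigotimes^{(n)}_{a_n\in A_n}X_{a_1,\dots,a_n}$ and $B=\bigotimes^{(n)}_{a_n\in A_n}\cdots\bigotimes^{(1)}_{a_1\in A_1}X_{a_1,\dots,a_n}$, the hom-set $\mathcal{F}(A,B)$ has at most one element.

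The first reduction is to the case in which each $\otimes_i$ is strictly associative and strictly unital. Applying Mac Lane's coherence theorem to each monoidal structure separately lets us choose, for every word, a canonical ``layered'' normal form (fully left-bracketed within each layer, with $\I_i$-factors absorbed) and the canonical associator/unitor isomorphism into it; one then checks that the interchange maps are compatible with this normalisation, i.e. that D1, D11 and the unit diagrams D2, D3, D7--D10, D12 survive it. This is the ``relative'' strictification already carried out, in the strict-unit setting, in \cite{BFSV} and \cite{AguMah}, and it should go through with only notational changes. This step is essential rather than cosmetic: only after it does every morphism $A\to B$ in $\mathcal{F}$ become canonically a composite of interchange maps alone, since these (unlike the associators and unitors) carry no formal inverses; and when in addition all units coincide and $\zeta,\nu,\iota$ are identities --- the situation of the Remark above --- only $\chi_{ij}$ survives, subject to D1, D11 and T1.

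Now apply Newman's lemma to the interchange rewriting system whose states are layered words and whose moves are applications of some $\chi_{ij}$ (or, degenerately, $\zeta_{ij},\nu_{ij},\iota_{ij}$) pushing a larger-indexed operation outward past an adjacent smaller-indexed one. When every index set $A_i$ is nonempty no unit appears in $A$, $B$, or any intermediate word, only $\chi$-moves occur, and termination is immediate from the strictly decreasing count of ``inversions'' --- occurrences of an $\otimes_i$ lying outside an $\otimes_j$ with $i<j$ --- of which $B$, the full reversal, is the unique word with none. Local confluence is then the statement that the overlapping redexes join up, and the point is that the only overlaps are within a single layer (giving D1, D11) and across three consecutive layers (giving the braid-type relation T1): there is no four-layer critical pair because, exactly as for the permutohedron, all relations among adjacent layer-transpositions are generated by braid moves and trivial commutations of disjoint ones. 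Newman's lemma then gives confluence, $A$ reduces uniquely to $B$, and every morphism $A\to B$ equals the canonical reduction. When some $A_i$ is empty, units enter, one must also admit the degenerate moves, and the overlaps additionally reproduce D2, D3, D7--D10, D12 and T2--T8, which again close up --- provided one can still guarantee termination.

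The step I expect to be the main obstacle --- and the reason the statement is only a conjecture --- is precisely this unit bookkeeping in the fully lax, multi-unit case. With the underlying monoidal structures pseudo but the interchange data merely lax, the usual $2$-monadic coherence machinery (``every pseudoalgebra is equivalent to a strict one'') does not apply, so the hands-on confluence argument seems unavoidable; but there the degenerate moves $\zeta_{ij}$, which enlarge an expression, and $\iota_{ij}$, which need not be invertible, are not obviously controlled by the inversion count, so one needs a more refined well-founded measure (or a canonical orientation of the unit maps) and must verify that D2--D12 and T2--T8 really do exhaust the resulting critical pairs. A prudent route is to establish the theorem first under the hypothesis that the units are connected and $\zeta,\nu,\iota$ are identities, recovering and mildly generalising the strict-unit coherence implicit in \cite{BFSV} and \cite{AguMah}, and only then to attempt the general case --- possibly at the cost of additional hypotheses forcing the unit maps to cohere, which it is conceivable are actually necessary for the statement to hold as written.
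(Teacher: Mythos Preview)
The statement you are addressing is a \emph{conjecture} in the paper, not a theorem: the authors explicitly write ``We conjecture that there may be a coherence theorem'' and offer no proof or even a proof sketch. There is therefore no argument in the paper to compare your proposal against.

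What you have written is not a proof but a plausible research programme, and you are candid about this yourself in the final paragraph. The strategy---strictify each monoidal layer via Mac Lane, then run a Newman's-lemma confluence argument on the interchange moves, with D1/D11 and T1 as the critical pairs in the unit-free case---is the natural one and is essentially the route taken in \cite{BFSV} for the strict, single-unit setting. Your identification of the genuine obstruction is also correct: in the fully lax, multi-unit situation the maps $\zeta_{ij}$ enlarge expressions and the $\iota_{ij}$ are not invertible, so the naive inversion-count measure fails to guarantee termination, and it is not obvious that D2--D12 and T2--T8 exhaust the critical pairs once units are in play. Until that bookkeeping is actually carried out (or a counterexample found), the statement remains open, which is precisely why the paper records it as a conjecture.
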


\begin{remark}
  If $\cC$ has an $n$-fold monoidal structure, then $\cC^\op$ has one
  as well, and opposition reverses the order of the monoidal
  structures: $(\otimes_i,\I_i)$ on $\cC^\op$ is obtained from
  $(\otimes_{n+1-i},\I_{n+1-i})$ on $\cC$.
\end{remark}

\subsection{Functors of $n$-fold monoidal categories}

Having defined these highly structured categories, one should also
define the appropriate notion of functors between them. There is a
bewildering array of options. In fact, a definition is required to
specify the various concepts:
\begin{defn}
  An \emph{$n$-fold functor type} $T$ is a list of $n$
  words, $T_1,\ldots,T_n$, each of which is ``lax'', ``oplax'', or
  ``strong'', subject to the condition that there is no $i<j$ with
  $T_i=\text{``oplax''}$ and $T_j=\text{``lax''}$.
\end{defn}

\begin{defn}
  Let $\cC$, $\cD$ be
  $n$-fold monoidal categories and $T$ an $n$-fold functor type.
  A \emph{$n$-fold monoidal functor of type $T$} from $\cC$ to $\cD$
  is a functor $F:\cC\rightarrow\cD$, equipped with a monoidal structure
  of type $T_i$ from $(\cC,\otimes_i,\I_i)$ to $(\cD,\otimes_i,\I_i)$,
  such that the following diagrams commute:
  \begin{center}
    \begin{tikzcd}
      (FW\otimes_jFX)\otimes_i(FY\otimes_jFZ)\ar[r, "\chi"]\ar[d, "\mu\otimes\mu"', slashed]&
      (FW\otimes_iFY)\otimes_j(FX\otimes_iFZ)\ar[d, "\mu\otimes\mu", double slashed]\\
      F(W\otimes_jX)\otimes_iF(Y\otimes_jZ)\ar[d, "\mu"', double slashed]&
      F(W\otimes_iY)\otimes_jF(X\otimes_iZ)\ar[d, "\mu", slashed]\\
      F((W\otimes_jX)\otimes_i(Y\otimes_jZ))\ar[r, "F(\chi)"']&
      F((W\otimes_iY)\otimes_j(X\otimes_iZ)),
    \end{tikzcd}
  \end{center}

  \begin{center}
    \begin{tikzcd}
      \I_i\ar[r, "\zeta"]\ar[dd, "\varepsilon"', double slashed]&
      \I_i\otimes_j\I_i\ar[d, "\varepsilon\otimes\varepsilon", double slashed]\\
      &F(\I_i)\otimes_jF(\I_i)\ar[d, "\mu", slashed]\\
      F(\I_i)\ar[r, "F(\zeta)"']&
      F(\I_i\otimes_j\I_i),
    \end{tikzcd}
    \hfil
    \begin{tikzcd}
      \I_j\otimes_i\I_j\ar[r, "\nu"]\ar[d, "\varepsilon\otimes\varepsilon"', slashed]&
      \I_j\ar[dd, "\varepsilon", slashed]\\
      F(\I_j)\otimes_iF(\I_j)\ar[d, "\mu"', double slashed]&\\
      F(\I_j\otimes_i\I_j)\ar[r, "F(\nu)"']&
      F(\I_j),
    \end{tikzcd}
  \end{center}

  \begin{center}
    \begin{tikzcd}
      \I_i\ar[r, "\iota"]\ar[d, "\varepsilon"', double slashed]&
      \I_j\ar[d, "\varepsilon", slashed]\\
      F(\I_i)\ar[r, "F(\iota)"']&
      F(\I_j).
    \end{tikzcd}
  \end{center}
\end{defn}
Here a single slash across an arrow indicates that it must be reversed
if $T_j$ is ``oplax'', and a double slash indicates that it must be
reversed if $T_i$ is ``oplax''. What we do if either is ``strong'' is
effectively a matter of preference, since the relevant morphisms are
isomorphisms.

When the functor type is ``lax'' everywhere, we refer to it as
\emph{fully lax}, and similarly for \emph{fully strong} or \emph{fully
  oplax}.

We can take monoid objects as a special case.
\begin{defn}\label{def:n-monoid}
  Let $T$ be an $n$-fold functor type and $\cC$ an $n$-fold monoidal
  category. A \emph{monoid object of type $T$} in $\cC$ is a monoidal
  functor of type $T$ from the terminal category $1$ to $\cC$ (note
  that, as before, $1$ has a unique $n$-fold monoidal category
  structure).

  If $T$ is fully lax, we call the object an \emph{$n$-monoid}, and if
  $T$ fully oplax, we call it an \emph{$n$-comonoid}.
\end{defn}

Here, as with monoid objects in monoidal categories, we are not much
interested in the cases where some $T_i$ is ``strong'' since that
simply forces a monoid object to be isomorphic to $\I_i$.

In general, such an object consists of an object $X$, equipped with
morphisms $X\otimes_i X\rightarrow X$ and $\I_i\rightarrow X$ whenever
$T_i$ is ``lax'', and their reverses whenever $T_i$ is ``oplax'', such
that $X$ is a monoid object or comonoid object according to whether
$T_i$ is ``lax'' or ``oplax'', and such that diagrams commute as
follows:
\begin{center}
  \begin{tikzcd}
    (X\otimes_jX)\otimes_i(X\otimes_jX)\ar[dd, "\chi"']\ar[r, "m\otimes m", slashed]&
    X\otimes_iX\ar[d, "m", double slashed]\\
    &X\\
    (X\otimes_iX)\otimes_j(X\otimes_iX)\ar[r, "m\otimes m", double slashed]&
    X\otimes_jX,\ar[u, "m"', slashed]\\
  \end{tikzcd}
  \hfil
  \begin{tikzcd}
    \I_i\ar[rr, "\iota"]\ar[dr, "u"', double slashed]&&
    \I_j\ar[dl, "u", slashed]\\
    &X,&
  \end{tikzcd}
\end{center}
\begin{center}
  \begin{tikzcd}
    \I_i\ar[r, "\zeta"]\ar[d, "u"', double slashed]&
    \I_i\otimes_j\I_i\ar[d, "u\otimes u", double slashed]\\
    X&X\otimes_jX,\ar[l, "m", slashed]
  \end{tikzcd}
  \hfil
  \begin{tikzcd}
    \I_j\otimes_i\I_j\ar[r, "\nu"]\ar[d, "u\otimes u"', slashed]&
    \I_j\ar[d, "u", slashed]\\
    X\otimes_iX\ar[r, "m"', double slashed]&
    X.
  \end{tikzcd}
\end{center}
Again we use the convention that single-slashed arrows get reversed if
$T_j$ is ``oplax'', and double-slashed arrows if $T_i$ is ``oplax''.

\begin{defn}
A natural transformation between functors of type $T$ is a natural
transformation that restricts to a monoidal natural transformation on
each monoidal structure.
\end{defn}

This definition means that we can define a category of functors of
type $T$ between two $n$-fold monoidal categories, and in particular a
category of monoid objects of type $T$.

\subsection{Induced maps}
\label{ss:induced-maps}

In examples, there are often canonical maps between different
products. For example, if $\odot$ represents the sequential
composition and $\otimes$ the parallel composition of concurrent
programs or components, then we might expect some models to have
canonical maps $X\odot Y\rightarrow X\otimes Y$ and
$Y\odot X\rightarrow X\otimes Y$, representing that parallel programs
$X$ and $Y$ could as well be scheduled to run sequentially in any
order, or that a specification with parallel components $X$ and $Y$
can be refined to specifications that put them in sequence.

These maps can be described uniformly requiring only a small piece of
extra structure; examples are provided later once we have given
examples of $n$-fold monoidal categories.

Indeed, let $\cC$ be an $n$-fold monoidal category and let
$a:\I_j\rightarrow\I_i$ be a map between units (where $i<j$ as
usual). We can then define natural transformations
$\varphi(a):X\otimes_iY\rightarrow X\otimes_jY$ and
$\bar\varphi(a):X\otimes_iY\rightarrow Y\otimes_j X$ as the following
composites:
\begin{center}
  \begin{tikzcd}
    X\otimes_iY\ar[d]\\
    (X\otimes_j\I_j)\otimes_i(\I_j\otimes_jY)\ar[d, "\chi"']\\
    (X\otimes_i\I_j)\otimes_j(\I_j\otimes_iY)\ar[d, "(1\otimes_ia)\otimes_j(a\otimes_i1)"']\\
    (X\otimes_i\I_i)\otimes_j(\I_i\otimes_iY)\ar[d]\\
    X\otimes_jY,
  \end{tikzcd}
  \hfil
  \begin{tikzcd}
    X\otimes_iY\ar[d]\\
    (\I_j\otimes_jX)\otimes_i(Y\otimes_j\I_j)\ar[d, "\chi"]\\
    (\I_j\otimes_iY)\otimes_j(X\otimes_i\I_j)\ar[d, "(a\otimes_i1)\otimes_j(1\otimes_ia)"]\\
    (\I_i\otimes_iY)\otimes_j(X\otimes_i\I_i)\ar[d]\\
    Y\otimes_jX.
  \end{tikzcd}
\end{center}

\begin{remark}
  These morphisms are familiar from the Eckmann-Hilton argument, but
  need not be invertible in this context.
\end{remark}

\subsection{Lax $n$-fold monoid objects}
\label{ss:lax-monoid-objs}

The $n$-fold monoid objects defined above have strict interchange,
unlike $n$-fold monoidal categories themselves. We can produce a lax
notion, but the morphisms needed for the interchange law require
monoidal bicategories rather than just monoidal categories. As a
monoid can be defined to be a one-object category, and a monoidal
category to be a one-object bicategory \cite{BaeDol1},
\cite{JohYau}*{11.2}, a monoidal bicategory can be defined to be a
one-object tricategory \cite{Gurski}.

This definition is intended so that, for example, lax $n$-fold monoids
in $\Cat$ (regarded as a monoidal bicategory using the cartesian
monoidal structure) are exactly $n$-fold monoidal categories.

\begin{defn}
  Let $\cB$ be a monoidal bicategory. A \emph{lax double monoid
  object} in $\cB$ consists of an object in $X\in\cB$, two monoid
  structures $(X,\otimes,\I)$ and $(X,\odot,\1)$ sharing it as
  underlying object, and 2-cells $\chi,\zeta,\nu,\iota$ between their
  composites which satisfy the double interchange diagrams.
\end{defn}

\begin{defn}
  Let $\cB$ be a monoidal bicategory. A \emph{lax $n$-fold monoid
  object} in $\bB$ consists of an object $X\in\cB$ with $n$ monoid
  structures on it, such that any two of them form a lax double
  monoid object, and such that any three of them also satisfy the
  triple interchange diagrams.
\end{defn}

\begin{remark}
  We do not make use of it in the present work, but a \emph{lax
  $n$-fold comonoid object} can be defined dually, or more immediately
  as a lax $n$-fold monoid object in $\bB^\op$.
\end{remark}

\begin{remark}
  Some terminological caution is required: a lax $1$-fold monoid
  object in $\Cat$ is an ordinary monoid object; this is \emph{not}
  the same as a lax monoid object. A lax monoid object has
  inequational associativity (that is, associativity only up to a
  morphism rather than an isomorphism); the only structure that we are
  weakening in this way is (as usual) the interchange.
\end{remark}

We need these definitions in subsections \ref{ss:catoids},
\ref{ss:asb} and \ref{ss:double}.

We make the following conjecture:
\begin{conj}
Let $\cB$ be a monoidal bicategory. It should be possible to define a
monoidal bicategory $\Lax(\cB)$ of monoid objects, lax monoidal
$1$-cells, and monoidal $2$-cells in $\cB$, so that $\Lax(\Lax(\cB))$
is the monoidal bicategory of lax double monoid objects, and
$\Lax(\Lax(\Lax(\cB)))$ the monoidal bicategory of lax triple monoid
objects, and so on.

This would have the effect that $\Lax^n(\Cat)$ would be the monoidal
bicategory of $n$-fold monoidal categories, fully lax monoidal
functors and monoidal natural transformations.
\end{conj}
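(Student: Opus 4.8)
The plan is to recognise this as an instance of the microcosm principle: monoid objects in a monoidal bicategory again form a monoidal bicategory, so one iterates. Concretely, for a monoidal bicategory $\cB$ I would define $\Lax(\cB)$ to be the bicategory whose objects are monoid objects in $\cB$ (with associativity and unitality holding up to invertible $2$-cells, as for monoid structures elsewhere in the excerpt), whose $1$-cells are lax monoidal $1$-cells, and whose $2$-cells are monoidal $2$-cells, with composition inherited from $\cB$. The content of the conjecture is then (i) that $\Lax(\cB)$ carries a natural monoidal structure, and (ii) that iterating this construction reproduces the lax $n$-fold monoid objects, and in particular $\Lax^n(\Cat)$ the $n$-fold monoidal categories.

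For (i) I would equip $\Lax(\cB)$ with a monoidal structure built from that of $\cB$. Since the tensor $\otimes$ on $\cB$ is a monoidal pseudofunctor, it sends a pair of monoid objects $(X,m_X,u_X),(Y,m_Y,u_Y)$ to a monoid object on $X\otimes Y$ (multiplication obtained from $m_X\otimes m_Y$ composed with the middle-four interchange equivalence of $\otimes$, unit from $u_X\otimes u_Y$ and $\zeta$-type structure of $\otimes$), and it likewise carries lax monoidal $1$-cells to lax monoidal $1$-cells and monoidal $2$-cells to monoidal $2$-cells. The monoidal unit of $\cB$ is canonically a monoid object and serves as the unit of $\Lax(\cB)$. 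The associator and unitor adjoint equivalences, the pentagonator, and the remaining coherence $2$- and $3$-cells of $\Lax(\cB)$ are each induced from the corresponding data of $\cB$: each is a cell of $\cB$ that one checks is compatible with the monoid multiplications, so lifts to $\Lax(\cB)$, and the tricategory axioms for $\Lax(\cB)$ then reduce to those for $\cB$.

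For (ii) I would unwind the iterate. A monoid object in $\Lax(\cB)$ is an object $X\in\cB$ carrying a monoid structure $(X,\otimes,\I)$ together with a second monoid structure $(X,\odot,\1)$ whose multiplication and unit are $1$-cells of $\Lax(\cB)$ — i.e.\ lax monoidal with respect to $\otimes$ — and whose associativity and unitality $2$-cells are $2$-cells of $\Lax(\cB)$, i.e.\ monoidal. Spelling this out produces exactly the data $\chi,\zeta,\nu,\iota$ together with precisely the double interchange diagrams D1--D12, so $\Lax(\Lax(\cB))$ is the monoidal bicategory of lax double monoid objects in $\cB$ (its higher cells being lax monoidal double-monoid morphisms and monoidal $2$-cells, by the same analysis one level up). Iterating, a monoid object in $\Lax^{n-1}(\cB)$ carries a monoid structure whose structure cells are morphisms in $\Lax^{n-2}(\cB)$, and so on; unravelling the recursion yields $n$ monoid structures which pairwise form lax double monoid objects, with the additional requirement that every structure cell of every pair be a morphism in the relevant $\Lax^{k}$ — which is exactly the conjunction of the triple interchange diagrams T1--T8 for every triple $i<j<k$. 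Hence $\Lax^n(\cB)$ is the monoidal bicategory of lax $n$-fold monoid objects in $\cB$. Finally, taking $\cB=\Cat$ with its cartesian monoidal structure, $\Lax(\Cat)$ is the (standard) monoidal bicategory of monoidal categories, lax monoidal functors and monoidal natural transformations, so $\Lax^n(\Cat)$ is the monoidal bicategory of $n$-fold monoidal categories, fully lax monoidal functors, and monoidal natural transformations.

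The hard part will be the coherence bookkeeping in step (i): a monoidal bicategory is a one-object tricategory, so ``$\Lax(\cB)$ is a monoidal bicategory'' requires lifting an adjoint-equivalence associator, two unitor adjoint equivalences, a pentagonator, three further modifications, and the higher coherence axioms relating them, checking compatibility with the monoid multiplications in each case, and organising this so that verification genuinely reduces to the axioms of $\cB$ rather than being redone by hand. A secondary subtlety is confirming that the conditions read off the recursion match the stated triple (and higher) interchange diagrams on the nose; here one relies on the fact that, at each level, ``being a morphism of lax $k$-fold monoid objects'' is \emph{by construction} the conjunction of the relevant interchange squares, so no appeal to the coherence conjecture stated earlier is needed — although such a theorem would considerably streamline a fully detailed proof.
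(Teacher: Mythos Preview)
The paper does not prove this statement: it is explicitly presented as a conjecture, with no proof or sketch supplied. So there is nothing to compare your approach against.

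That said, what you have written is a reasonable \emph{outline} of how one would attack the conjecture, not a proof. You correctly identify the two ingredients --- equipping $\Lax(\cB)$ with a monoidal bicategory structure, and then unwinding the iteration --- and your description of step (ii) is essentially accurate: a monoid object in $\Lax(\cB)$ does unwind to a second monoid structure whose data are lax monoidal and whose coherence $2$-cells are monoidal, yielding the double interchange diagrams, and the recursion should indeed produce the pairwise and triple conditions. But you yourself flag that ``the hard part will be the coherence bookkeeping in step (i)'', and this is precisely the gap. Saying that the associator, unitors, pentagonator and the further modifications of $\cB$ ``lift'' because they are ``compatible with the monoid multiplications'' is an assertion, not an argument: each of these is a piece of tricategorical data, and verifying that the lifted cells assemble into a monoidal bicategory means checking the full tricategory axioms for $\Lax(\cB)$, which is substantial and not obviously reducible to those of $\cB$ without further machinery. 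The authors evidently regard this as nontrivial, which is why they state it as a conjecture rather than a proposition; your sketch does not close that gap.

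A secondary point: you assert without argument that no quadruple (or higher) interchange conditions arise from the recursion. For the paper's definition of lax $n$-fold monoid object this is true --- only pairwise and triple diagrams are imposed --- but checking that the recursive unwinding of $\Lax^n$ produces \emph{exactly} these and nothing more (in particular, that the ``morphism in $\Lax^k$'' conditions at each level collapse to T1--T8 for triples and impose nothing further for quadruples) requires an inductive argument that you have gestured at but not supplied.
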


\section{Examples of $n$-fold monoidal categories}
\label{s:examples-nfold}

We discuss some examples of $n$-fold monoidal categories; further ones
can be found in \cite{AguMah}*{Section 6.4}.

\subsection{Operations on posets}
\label{ss:ops-posets}

Let $P$ be a poset, regarded as a category in the usual way. Giving
$P$ the structure of a monoidal category means imposing a monoid
structure on $P$, considered as a set, where the product is an
order-preserving function $P\times P\rightarrow P$. This data is known
as a \emph{partially ordered monoid}.

If there are multiple monoid structures on $P$, we can ask whether
they form an $n$-fold monoidal structure. In a poset category, all
diagrams commute, so only the maps $\chi$, $\zeta$, $\nu$ and $\iota$
are required to exist. These represent inequalities.

We give some examples of cases where they hold.
\begin{prop}
  If $(P,\odot,1)$ is a partially ordered commutative monoid, then $P$
  has the structure of an $n$-fold monoidal category with all monoidal
  structures given by $\odot$~and~$1$.
\end{prop}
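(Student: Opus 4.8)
The plan is to verify the definition of an $n$-fold monoidal category directly, exploiting the fact that $P$ is a poset category (so all diagrams built from the structure maps commute automatically) and that $(P,\odot,1)$ is commutative. Since we use the single pair $(\odot,1)$ for every one of the $n$ monoidal structures $(\otimes_i,\I_i) := (\odot,1)$, all the associators $\alpha_i$, unitors $\lambda_i,\rho_i$ are those of the given partially ordered monoid, and the units all coincide: $\I_i = 1$ for all $i$. The only thing left to supply is, for each pair $i<j$, the four structure maps $\chi_{ij},\zeta_{ij},\nu_{ij},\iota_{ij}$; in a poset these are nothing but order inequalities, and the coherence diagrams (D1)--(D12) and (T1)--(T8) hold for free once the maps exist.

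First I would exhibit $\iota_{ij}\colon 1\to 1$ as the identity, $\zeta_{ij}\colon 1\to 1\odot 1$ and $\nu_{ij}\colon 1\odot 1\to 1$ as the (equal, since the unit laws give $1\odot 1 = 1$) unit isomorphisms --- these are all forced and unproblematic. The substantive point is $\chi_{ij}$, which must be an order-preserving map, i.e.\ an inequality,
\begin{equation*}
  (w\odot x)\odot(y\odot z) \;\leq\; (w\odot y)\odot(x\odot z)
\end{equation*}
for all $w,x,y,z\in P$. But by associativity and commutativity of $\odot$, both sides are equal to $w\odot x\odot y\odot z$ (in any bracketing/order), so this is in fact an equality, and in particular the required inequality holds. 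Thus $\chi_{ij}$ exists.

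Having produced all structure maps, I would then invoke the remark (made just before the proposition, in the ``Operations on posets'' subsection) that in a poset category all diagrams automatically commute, so that all of (D1)--(D12) and, for each triple $i<j<k$, all of (T1)--(T8) are satisfied trivially. Hence $P$ with these data is an $n$-fold monoidal category. I do not anticipate a genuine obstacle here: the only thing one must be slightly careful about is checking that the maps one writes down really are order-preserving (equivalently, that the claimed inequalities genuinely hold), and commutativity of $\odot$ is exactly what makes the one non-trivial inequality --- the $\chi$ inequality --- an equality. Everything else is formal.
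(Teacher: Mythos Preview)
Your proposal is correct and takes essentially the same approach as the paper: both arguments observe that in a poset category all coherence diagrams commute automatically, so one need only exhibit the four structure maps as inequalities, and then note that commutativity and the unit laws make each of $\chi,\zeta,\nu,\iota$ an equality. The paper's proof is simply a terser presentation of exactly what you wrote.
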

\begin{proof}
  All four inequalities are in fact identities in any commutative
  monoid:
  \begin{align*}
    \chi:&&  (a\odot b)\odot(c\odot d)&\leq (a\odot c)\odot(b\odot d),\\
    \zeta:&& 1&\leq 1\odot 1,\\
    \nu:&&   1\odot 1&\leq 1,\\
    \iota:&& 1&\leq 1.\qedhere
    \end{align*}
\end{proof}

\begin{prop}
  \label{prop:po-monoid-join}
  Let $P$ be a partially ordered monoid which is a join-semilattice
  under the order. Then $P$ has the structure of a double monoidal
  category. Its first monoidal structure, $(\vee,\bot)$, arises from
  the semilattice, its second one, $(\odot,1)$, from the monoid on
  $P$.
\end{prop}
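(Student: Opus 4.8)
The goal is to equip a partially ordered monoid $(P,\odot,1)$ which is a join-semilattice under its order with a double monoidal structure, taking $\otimes_1 = (\vee,\bot)$ and $\otimes_2 = (\odot,1)$. Since $P$ is a poset viewed as a category, every diagram of parallel morphisms commutes automatically; hence, as noted in Subsection~\ref{ss:ops-posets}, the only thing to supply is the four structure maps $\chi$, $\zeta$, $\nu$, $\iota$, i.e.\ we must verify four inequalities. Recalling the shapes from the definition of double monoidal category (with $\otimes = \vee$, $\odot = \odot$, $\I = \bot$, $\1 = 1$), these are:
\begin{align*}
  \chi:&& (w\odot x)\vee(y\odot z) &\leq (w\vee y)\odot(x\vee z),\\
  \zeta:&& \bot &\leq \bot\odot\bot,\\
  \nu:&& 1\vee 1 &\leq 1,\\
  \iota:&& \bot &\leq 1.
\end{align*}

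First I would dispatch the three easy ones. The map $\iota:\bot\to 1$ is immediate since $\bot$ is the least element. The map $\nu: 1\vee 1\to 1$ holds because $1\vee 1 = 1$ (idempotence of $\vee$), so this is in fact an equality. For $\zeta:\bot\to\bot\odot\bot$, we again use that $\bot$ is least, so $\bot\leq\bot\odot\bot$ trivially. One should also note that $\bot$ is automatically \emph{absorbing} for $\odot$ need not hold — but we don't need that; we only need the displayed inequalities, and monotonicity of $\odot$ is what makes these morphisms natural (which is free in a poset anyway).

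The substantive point is $\chi$. Here I would argue as follows: by monotonicity of $\odot$ in each argument, $w\odot x \leq (w\vee y)\odot(x\vee z)$ since $w\leq w\vee y$ and $x\leq x\vee z$; similarly $y\odot z\leq (w\vee y)\odot(x\vee z)$. Taking the join of the two left-hand sides, the universal property of $\vee$ gives $(w\odot x)\vee(y\odot z)\leq (w\vee y)\odot(x\vee z)$, which is exactly $\chi$. This is the only place the interaction between $\odot$ and $\vee$ is used, and it uses precisely that $\odot$ is order-preserving and that $\vee$ is a join.

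That completes the construction of the four maps, and — since all diagrams D1--D12 and all naturality squares commute automatically in a poset category — $P$ is a double monoidal category with the claimed structures. The main (and in fact only) obstacle is recognizing which four inequalities are required and getting their orientation right; once the shapes of $\chi,\zeta,\nu,\iota$ are correctly extracted from the definition, each verification is a one-line consequence of monotonicity of $\odot$, idempotence of $\vee$, and $\bot$ being least. I would present the proof as a short display of the four inequalities followed by the one-line justification of $\chi$, mirroring the style of the preceding proposition.
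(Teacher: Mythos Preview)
Your proposal is correct and follows essentially the same approach as the paper: both reduce the problem to verifying the four inequalities $\chi$, $\zeta$, $\nu$, $\iota$, and both identify the only nontrivial one as $\chi$, which is derived from monotonicity of $\odot$. You give a bit more detail in deriving $\chi$ from order-preservation via the universal property of the join, whereas the paper simply remarks that the subdistributivity law is equivalent to order preservation; otherwise the arguments are identical.
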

\begin{proof}
  The required inequalities hold in any join-semilattice with a monoid
  structure:
  \begin{align*}
    \chi:&&  (a\odot b)\vee(c\odot d)&\leq (a\vee c)\odot(b\vee d),\\
    \zeta:&& \bot&\leq \bot\odot\bot,\\
    \nu:&&   1\vee 1&\leq 1,\\
    \iota:&& \bot&\leq 1.
  \end{align*}
  Only the first inequality may require comment: this
  subdistributivity law is equivalent to order preservation.
\end{proof}

This proposition dualises:
\begin{prop}
  \label{prop:po-monoid-meet}
  Let $P$ be a partially ordered monoid which is also a
  meet-semilattice.  Then it has the structure of a double
  monoidal category with monoidal structures $(\odot,1)$ and
  $(\wedge,\top)$.\qed
\end{prop}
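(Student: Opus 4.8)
The plan is to obtain this by applying Proposition~\ref{prop:po-monoid-join} to the opposite poset and then dualising. First I would note that the notion of partially ordered monoid is stable under opposition: an order-preserving map $\odot:P\times P\to P$ remains order-preserving when the order on $P$ is reversed, so $(P^\op,\odot,1)$ is again a partially ordered monoid. Secondly, a meet-semilattice becomes a join-semilattice upon passing to the opposite, since binary meets (categorical products) turn into binary joins (categorical coproducts), and the top element $\top$ of $P$ becomes the bottom element of $P^\op$. Hence $P^\op$ satisfies the hypotheses of Proposition~\ref{prop:po-monoid-join}.

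Applying that proposition, $P^\op$ carries a double monoidal structure whose first monoidal structure is the semilattice structure $(\vee,\bot)$ of $P^\op$, which is exactly $(\wedge,\top)$ of $P$, and whose second is $(\odot,1)$. Now I would invoke the remark above that if $\cC$ is an $n$-fold monoidal category then so is $\cC^\op$, with the order of the monoidal structures reversed. Taking $\cC=P^\op$, this endows $P=(P^\op)^\op$ with the structure of a double monoidal category whose first structure is the second structure of $P^\op$, namely $(\odot,1)$, and whose second structure is the first structure of $P^\op$, namely $(\wedge,\top)$. This is precisely the claim.

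There is essentially no obstacle; the only things to verify are the two closure-under-opposition facts above, both immediate. Alternatively, one can check the four required inequalities directly in $P$, using that every diagram in a poset category commutes so that only the existence of $\chi,\zeta,\nu,\iota$ is at stake: the map $\chi$ asks for $(a\wedge b)\odot(c\wedge d)\leq(a\odot c)\wedge(b\odot d)$, which follows from order-preservation of $\odot$ since the left-hand side is below both $a\odot c$ and $b\odot d$; $\zeta$ asks for $1\leq 1\wedge 1$, an equality; $\nu$ asks for $\top\odot\top\leq\top$, true since $\top$ is greatest; and $\iota$ asks for $1\leq\top$, again true. Either route completes the proof.
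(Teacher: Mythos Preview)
Your proposal is correct and matches the paper's approach: the paper simply records that the statement dualises Proposition~\ref{prop:po-monoid-join} and omits any further argument, which is exactly your first route via $P^\op$ together with the remark that opposition reverses the order of the monoidal structures. Your alternative direct check of the four inequalities is also fine and is the explicit content of that duality.
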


We consider similar structures with greater sophistication in
Section \ref{s:boolsemiring}.

There seems to be a wide range of similar structures. Addition and
multiplication on $\bN$ (with the standard ordering) do not extend to
a double monoidal structure: we have $0<1$ but $1+1>1$. However, if we
define a monoidal structure à la boolean rings by
$a\odot b = a + b + ab$ and with unit $0$, then
\begin{equation*}
(a\odot b)+(c\odot d)\leq(a+c)\odot(b+d).
\end{equation*}
Since the units agree, there is a double monoidal structure for $+$
and $\odot$.

The only extra requirements for a triple monoidal category, or an
$n$-fold monoidal category with $n>3$, are commutative diagrams.
These are automatically satisfied in a poset category and we can
extend these structures to $n$-fold monoidal categories in various
ways:
\begin{itemize}
\item Whenever one of the monoidal structures is commutative (as for
  instance join-semilattice or meet-semilattice structures), we can
  obtain an $(n+1)$-fold monoidal structure from an $n$-fold monoidal
  structure by repeating that monoidal structure.
\item We can chain together an $m$-fold with an $n$-fold monoidal
  structure whose last and first monoidal structures (respectively)
  agree to form an $(m+n-1)$-fold monoidal structure. For example, if
  $P$ is a partially ordered monoid and also a lattice (a
  \emph{lattice-ordered monoid}), then we can chain together the
  example of Propositions \ref{prop:po-monoid-join} and
  \ref{prop:po-monoid-meet} to get a triple monoidal category using
  $(\vee,\bot)$, $(\odot,1)$ and $(\wedge,\top)$ in turn.
\end{itemize}

There is a similar down-to-earth description of monoid objects in
poset categories with $n$-fold monoidal structures:
\begin{prop}
Let $P$ be a poset with an $(m+n)$-fold monoidal structure, with units
$1_1,\ldots,1_{m+n}$ and products $\odot_1,\ldots,\odot_{m+n}$. A
monoid object in $P$ of type $\text{lax}^m,\text{oplax}^n$ is an
object $x\in P$ with $1_m\leq x\leq 1_{m+1}$ and $x\odot_i x\leq x$
for $1\leq i\leq m$ and $x\leq x\odot_i x$ for $m+1\leq i\leq m+n$.
\end{prop}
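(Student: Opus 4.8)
The plan is to specialise Definition~\ref{def:n-monoid} to the case where the ambient $(m+n)$-fold monoidal category is a poset $P$, observe that in this setting all the structure data is forced and all the coherence diagrams hold automatically, and finally collapse the resulting family of unit inequalities to the two stated bounds.

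First I would spell out what a monoidal functor of type $T=(\mathrm{lax}^m,\mathrm{oplax}^n)$ from the terminal category to $P$ amounts to. Such a functor picks out an object $x\in P$, and for each index $i$ it equips $x$ with a monoid structure for $\otimes_i$ when $T_i$ is ``lax'' (i.e.\ $1\le i\le m$) and with a comonoid structure for $\otimes_i$ when $T_i$ is ``oplax'' (i.e.\ $m+1\le i\le m+n$). Since hom-sets of a poset category are either empty or singletons, with a morphism $a\to b$ present exactly when $a\le b$, the multiplication/unit maps $x\otimes_i x\to x$ and $\I_i\to x$ are simply the assertions $x\odot_i x\le x$ and $1_i\le x$, while their oplax reverses are the assertions $x\le x\odot_i x$ and $x\le 1_i$. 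Every diagram in $P$ commutes, so the associativity and unitality diagrams making $x$ a (co)monoid object for each $\otimes_i$, together with all the double and triple interchange diagrams appearing in the expanded form of Definition~\ref{def:n-monoid}, are satisfied for free. Hence a type-$T$ monoid object in $P$ is precisely an $x\in P$ satisfying $x\odot_i x\le x$ for $1\le i\le m$, $x\le x\odot_i x$ for $m+1\le i\le m+n$, $1_i\le x$ for $1\le i\le m$, and $x\le 1_i$ for $m+1\le i\le m+n$.

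It remains to reduce the unit conditions. The structure maps $\iota_{ij}\colon\I_i\to\I_j$ for $i<j$ that are part of any $(m+n)$-fold monoidal structure become, in the poset $P$, the inequalities $1_i\le 1_j$, yielding the chain $1_1\le 1_2\le\cdots\le 1_{m+n}$ (the categorical shadow of coherence T8, though here transitivity of $\le$ alone suffices). Consequently the conditions $\{1_i\le x : 1\le i\le m\}$ are all implied by $1_m\le x$, and $\{x\le 1_i : m+1\le i\le m+n\}$ are all implied by $x\le 1_{m+1}$, while $1_m\le x$ and $x\le 1_{m+1}$ are themselves among these conditions; so the whole family is equivalent to $1_m\le x\le 1_{m+1}$. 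Combining this with the previous paragraph gives the claimed description. I do not anticipate any real difficulty; the only point needing a moment's attention is recognising that the unit inequalities are not independent but are linked by the $\iota$-chain $1_1\le\cdots\le 1_{m+n}$, which is exactly what permits replacing the family of unit conditions by the single pair $1_m\le x\le 1_{m+1}$.
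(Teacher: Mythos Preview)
Your proposal is correct and follows essentially the same approach as the paper: translate the (co)monoid structure maps into inequalities, note that all coherence diagrams commute automatically in a poset, and then use the chain $1_1\le\cdots\le 1_{m+n}$ coming from the maps $\iota_{ij}$ to collapse the family of unit conditions to the single pair $1_m\le x\le 1_{m+1}$. One minor terminological slip: the compatibility diagrams spelled out after Definition~\ref{def:n-monoid} involve only pairs of monoidal structures, so there are no ``triple interchange diagrams'' to check for a monoid object of type $T$; but since every diagram in a poset commutes, this has no effect on the argument.
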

\begin{proof}
This is a transcription of the structure, using again that all
diagrams in poset categories automatically commute.

For $1\leq i\leq m$, the monoid structures have units which give
$1_i\leq x$ and products which give $x\odot_ix\leq x$. However, since
we have $\iota$ giving $1_1\leq\cdots\leq 1_m$, we only need to impose
that $1_m\leq x$.

Dually, for $m+1\leq i\leq m+n$, the comonoid structures have counits
which give $x\leq 1_i$ and coproducts which give $x\leq x\odot_i
x$. Again, $\iota$ means that $x\leq 1_{m+1}$ is all that is needed.
\end{proof}

\subsection{Braided and symmetric monoidal categories}

Sufficiently structured monoidal categories give us $n$-fold
monoidal categories: we start with double monoidal categories (this is
essentially the material of \cite{JoyStr}*{Section 3}.)

\begin{prop}
\label{prop:braided-double}
Every braided monoidal category $\cC$, with product $\otimes$ and unit
$\I$, can be made into a double monoidal category, using $\otimes$ for
both products and $\I$ for both units.
\end{prop}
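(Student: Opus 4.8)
The plan is to construct, from the braiding $\tau$, the four structure maps $\chi$, $\zeta$, $\nu$, $\iota$ and then verify the twelve double interchange diagrams D1--D12. Since both monoidal structures are literally the same $(\otimes,\I)$, the maps $\zeta:\I\to\I\otimes\I$ and $\nu:\I\otimes\I\to\I$ should be taken to be $\lambda_\I^{-1}=\rho_\I^{-1}$ and $\lambda_\I=\rho_\I$ respectively, and $\iota:\I\to\I$ to be the identity; by the earlier Proposition expressing $\iota$ in terms of $\chi$, this choice is in fact forced once $\chi$ is fixed, so the only real content is the definition of $\chi$. The natural candidate is
\begin{equation*}
  \chi^{W,X}_{Y,Z}:(W\otimes X)\otimes(Y\otimes Z)\longrightarrow(W\otimes Y)\otimes(X\otimes Z)
\end{equation*}
built as the middle-four-interchange map: reassociate to $W\otimes(X\otimes(Y\otimes Z))$, apply $1\otimes(1\otimes\tau_{X,\dots})$-style braidings to swap $X$ past $Y$ (there is a choice here of swapping $X$ with $Y$ directly, using the hexagon axioms to move $X$ across the composite $Y\otimes Z$ or one letter at a time), and reassociate back. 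So the first step is to pin down this composite precisely using $\alpha$ and $\tau$, and record its naturality (automatic, as a composite of natural transformations).

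Next I would dispatch the ``easy'' diagrams D2--D10 and D12. Because the units coincide and $\zeta,\nu$ are unitors (hence part of the coherence data) and $\iota=1$, these diagrams reduce to instances of Mac Lane's coherence theorem for the single monoidal category $\cC$ together with the braiding axioms — i.e.\ they become equalities of canonical maps built from $\alpha$, $\lambda$, $\rho$, $\tau$. I would invoke coherence for braided monoidal categories (Joyal--Street) to handle each of these uniformly rather than drawing them out: once every arrow in sight is a composite of associators, unitors and braidings with no ``repeated-object'' ambiguity beyond what the braiding hexagons resolve, the diagram commutes. The remark in the excerpt already flags that in this shared-unit situation ``only D1 and D11 need checking,'' so I would lean on that and state explicitly that D2--D10, D12 follow from coherence.

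That leaves D1 and D11, the genuine interchange pentagon/hexagon-type diagrams, which are the main obstacle. D1 asserts a compatibility of $\chi$ with $\otimes$-associativity on a five-fold tensor $(U\otimes V)\otimes(W\otimes X)\otimes(Y\otimes Z)$, and D11 its mirror image. I would prove D1 by expanding both legs entirely into composites of $\alpha^{\pm1}$ and $\tau$, and then verify the resulting equality of two words in the braid/associator calculus. The clean way to do this is again coherence for braided monoidal categories: both legs are maps between the same two objects that are built solely from structural isomorphisms, and one checks they induce the same permutation of the tensor factors (with the same underlying braid, not merely the same permutation — this is where one must be careful, since in a merely braided category $\tau$ and $\tau^{-1}$ differ). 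Concretely I expect that unwinding $\chi$ on each side of D1 produces, after cancelling associators via the pentagon, the braid that moves the block $W\otimes X$ past $Y\otimes Z$ (equivalently two strands past two strands), computed in two ways that agree by the naturality of $\tau$ and the two hexagon identities. I would present this as: ``expanding the definition of $\chi$, diagram D1 reduces to an identity of canonical morphisms in the free braided monoidal category on six objects, which holds by coherence \cite{JoyStr}''; if a hands-on check is wanted instead, it is a finite diagram chase gluing together two naturality squares for $\tau$, one hexagon, and several pentagon instances, exactly as in \cite{JoyStr}*{Section 3}. D11 is handled identically (or deduced from D1 by the duality remark, reversing the order of the two — here identical — monoidal structures and passing to $\cC$ with the reverse braiding $\tau^{-1}$, which is again a braiding).
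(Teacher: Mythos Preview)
Your proposal is correct and matches the paper's approach: define $\chi$ as the middle-four interchange built from the braiding, take $\iota=1$ and $\zeta,\nu$ to be unitors, observe that only D1 and D11 require real work, and verify those as braid identities (the paper draws the two six-strand braids explicitly and notes they are equal; your appeal to Joyal--Street coherence is the same argument in words). One minor slip worth correcting before you write it up: the source of D1 is a six-fold, not five-fold, tensor, and the braid it encodes is not simply ``$W\otimes X$ past $Y\otimes Z$'' but a genuine six-strand identity---though your stated method of computing the braid on each leg and comparing would produce the correct picture regardless.
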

\begin{proof}
We take the interchange morphism
\[\chi:(A\otimes B)\otimes(C\otimes D)\rightarrow(A\otimes C)\otimes(B\otimes D)\]
from the braiding $B\otimes C\rightarrow C\otimes B$ (it is thus in
fact an isomorphism). The morphism $\iota$ is taken to be the
identity, and $\zeta$ and $\nu$ to be unit maps.

The only diagrams which do not simply move units around are D1 and
D11. D1 expresses the following identity of braids:
\begin{displaymath}
  \begin{tikzpicture}
    \begin{scope}[scale=0.5, shift={(-4,0)}]
      \node[left] at (0,0) {U};
      \node[left] at (0,1) {V};
      \node[left] at (0,2) {W};
      \node[left] at (0,3) {X};
      \node[left] at (0,4) {Y};
      \node[left] at (0,5) {Z};
      \draw (0,0) -- (4,0);
      \draw (0,5) -- (4,5);
      \draw (0,1) -- (2,1) -- (4,2);
      \draw (0,2) -- (2,4) -- (4,4);
      \draw[line width=2mm, white] (0,3) -- (4,1);
      \draw (0,3) -- (4,1);
      \draw[line width=2mm, white] (0,4) -- (2,3) -- (4,3);
      \draw (0,4) -- (2,3) -- (4,3);
    \end{scope}
    \node at (0.5,1.25) {=};
    \begin{scope}[scale=0.5, shift={(3,0)}]
      \node[left] at (0,0) {U};
      \node[left] at (0,1) {V};
      \node[left] at (0,2) {W};
      \node[left] at (0,3) {X};
      \node[left] at (0,4) {Y};
      \node[left] at (0,5) {Z};
      \draw (0,0) -- (4,0);
      \draw (0,5) -- (4,5);
      \draw (0,1) -- (2,2) -- (4,2);
      \draw (0,2) -- (4,4);
      \draw[line width=2mm, white] (0,3) -- (2,1) -- (4,1);
      \draw (0,3) -- (2,1) -- (4,1);
      \draw[line width=2mm, white] (0,4) -- (2,4) -- (4,3);
      \draw (0,4) -- (2,4) -- (4,3);
    \end{scope}
  \end{tikzpicture}.
\end{displaymath}
D11 expresses a similar identity.
\end{proof}

The following theorem is implicit in the work of Joyal and Street
\cite{JoyStr}:
\begin{prop}
\label{prop:symmetric-nfold}
Every symmetric monoidal category $\cC$ can be made into an $n$-fold
monoidal category for any $n$, using $\otimes$ for all the products,
and $\I$ for all the units.
\end{prop}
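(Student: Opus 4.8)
The plan is to reduce the $n$-fold case to the double case (Proposition~\ref{prop:braided-double}) by exhibiting all the required structure maps in terms of the symmetry $\tau$ and then checking that the diagrams in the definition of an $n$-fold monoidal category hold. Concretely, for every pair $i<j$ I take $\otimes_i = \otimes_j = \otimes$, $\I_i = \I_j = \I$, let $\iota_{ij}$ be the identity, let $\zeta_{ij}$ and $\nu_{ij}$ be the canonical unit isomorphisms, and define $\chi_{ij}$ to be the interchange morphism built from the symmetry, exactly as in the proof of Proposition~\ref{prop:braided-double}: it is the composite $(A\otimes B)\otimes(C\otimes D) \to (A\otimes C)\otimes(B\otimes D)$ obtained by applying $\tau_{B,C}$ in the middle (together with associators). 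Since every $\chi_{ij}$ is the \emph{same} map (the braiding does not depend on the indices $i,j$), the double interchange diagrams D1 and D11 hold by Proposition~\ref{prop:braided-double}, and the remaining double diagrams D2--D10, D12 hold because they only shuffle units around; here we use that $\cC$ is symmetric, so that $\tau_{B,C}$ and $\tau_{C,B}$ are mutually inverse, which is what makes the two instances of the braid identity in D1/D11 coincide even when the three monoidal structures are literally equal.

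Next I would verify the eight triple interchange diagrams T1--T8 for every $i<j<k$. As the excerpt's remark notes, T2--T8 involve only units and therefore commute automatically once $\iota$, $\zeta$, $\nu$ are the evident unit isomorphisms (all three ``reversal'' choices give unit data, and coherence for the unit object of a monoidal category handles these). So the entire content is T1, which asserts that the two ways of reversing the order of a triple tensor product $X_{abc}$ indexed by three two-element sets — by swapping adjacent factors two at a time — agree. Since all three monoidal structures are $\otimes$ and all interchange maps are the symmetry-induced braiding, T1 becomes an identity of permutations-of-strings realised by $\tau$: both composites realise the order-reversing permutation on the eight tensor factors $S,T,U,V,W,X,Y,Z$, and they are equal by the coherence theorem for symmetric monoidal categories (Mac Lane), which guarantees that any two morphisms built from $\alpha$, $\lambda$, $\rho$ and $\tau$ with the same underlying permutation are equal. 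One can also see this pictorially as the equality of the two relevant braid(-with-crossings) diagrams in a symmetric — as opposed to merely braided — setting, which is where the symmetry hypothesis is essential: in a braided category the two sides of T1 would differ by a nontrivial braiding.

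The main obstacle is purely bookkeeping: spelling out T1 (and, to a lesser extent, D1/D11) as a genuine string/braid identity requires carefully tracking which of the eight tensor factors each $\tau$ transposes and in what order, and confirming that the two sides induce the same permutation; the symmetric coherence theorem then finishes the argument without further calculation. I expect the write-up to consist of (i) the definitions of the structure maps, (ii) the reduction of the double diagrams to Proposition~\ref{prop:braided-double}, (iii) the observation that T2--T8 are unit-coherence, and (iv) a short appeal to symmetric coherence (or a picture) for T1, with the remark that this is the unique place symmetry, rather than braiding, is used — consistent with the sentence in the excerpt that the result is ``implicit in the work of Joyal and Street~\cite{JoyStr}''.
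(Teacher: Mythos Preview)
The paper does not actually give a proof of this proposition: it merely attributes the result to Joyal and Street and moves on. Your proposal is therefore not being compared against a proof in the paper, but it is a correct and natural argument, and it is the one the paper's surrounding remarks implicitly point to (using Proposition~\ref{prop:braided-double} for the double structure and the remark that when the units coincide only D1, D11 and T1 need checking).

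One small point of confusion: your parenthetical about symmetry ``making the two instances of the braid identity in D1/D11 coincide even when the three monoidal structures are literally equal'' is misplaced. Diagrams D1 and D11 involve only two monoidal structures, and Proposition~\ref{prop:braided-double} already proves them using only the braiding --- no symmetry is needed there. Symmetry enters precisely and only at T1, as you correctly say later. I would drop that parenthetical and simply state: D1--D12 hold by Proposition~\ref{prop:braided-double}, T2--T8 are unit coherence (as the paper's remark notes), and T1 holds because both composites realise the same permutation of eight tensor factors, hence agree by Mac~Lane's coherence theorem for symmetric monoidal categories.
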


Joyal and Street also give a converse. They define degeneracy
conditions on double and triple monoidal categories so that they
determine braided or symmetric monoidal categories
(respectively). These conditions amount to the monoidal structures
being coherently isomorphic, and for the interchanges to be natural
isomorphisms. Although these are degenerate examples of $n$-fold
monoidal categories, they are nonetheless useful building blocks for
what follows.

We now turn to monoid objects in such categories, following
\cite{Porst}.
\begin{defn}
  Let $\cC$ be a braided monoidal category equipped with the double
  monoidal structure $\otimes_1=\otimes_2=\otimes$, as described
  above, and let $T$ be the $2$-fold monoidal functor type ``lax,
  oplax''. A monoid object in $\cC$ of type $T$ is then a known as a
  \emph{bimonoid object} in $\cC$.
\end{defn}

More explicitly, a bimonoid object in $\cC$ consists of an object $X$
equipped with the structure of a monoid object $m:X\otimes
X\rightarrow X$ and $u:\I\rightarrow X$, and with the structure of a
comonoid object $n:X\rightarrow X\otimes X$ and $v:X\rightarrow\I$,
which satisfies (in addition to the usual conditions for a monoid
object and a comonoid object), diagrams as follows:
\begin{center}
  \begin{tikzcd}
    (X\otimes X)\otimes(X\otimes X)\ar[dd, "\tau_{2,3}"']&
    X\otimes X\ar[d, "m"]\ar[l, "n\otimes n"']\\
    &X\ar[d, "n"]\\
    (X\otimes X)\otimes(X\otimes X)\ar[r, "m\otimes m"']&
    X\otimes X,\\
  \end{tikzcd}
  \hfil
  \begin{tikzcd}
    \I\ar[rr, "1"]\ar[dr, "u"']&&
    \I\\
    &X\ar[ur, "v"'],&
  \end{tikzcd}
\end{center}
\begin{center}
  \begin{tikzcd}
    \I\ar[r, "1"]\ar[d, "u"']&
    \I\otimes\I\ar[d, "u\otimes u"]\\
    X\ar[r, "n"']&X\otimes X,
  \end{tikzcd}
  \hfil
  \begin{tikzcd}
    \I\otimes\I\ar[r, "1"]&
      \I\\
      X\otimes X\ar[r, "m"']\ar[u, "v\otimes v"]&
      X\ar[u, "v"'].
  \end{tikzcd}
\end{center}
Here $\tau_{2,3}$ denotes the isomorphism $(w\otimes
x)\otimes(y\otimes z)\mapsto(w\otimes y)\otimes(x\otimes z)$ that
exchanges the second and third factors of $X$.

\begin{remark}
  If the underlying category is the category of vector spaces over a
  field $k$, then the result is precisely the usual concept of
  bialgebra \cite{Majid}.
\end{remark}

\begin{remark}
  If the underlying category is $\Rel$, monoid and comonoid objects
  coincide. The resulting structures are considered in
  \cite{CraDohStr1}.
\end{remark}

This construction extends to functors:
\begin{prop}
  \label{prop:braided-symmetric-functors}
  If $\cC$ and $\cD$ are braided monoidal categories, then a lax
  (respectively oplax, respectively strong) monoidal functor
  $\cC\rightarrow\cD$ gives a fully lax (respectively fully oplax,
  respectively fully strong) $2$-fold monoidal functor between $\cC$
  and $\cD$ thought of as double monoidal categories.

  If $\cC$ and $\cD$ are symmetric monoidal, then the same is true of
  the $n$-fold monoidal functor between $\cC$ and $\cD$ thought of as
  $n$-fold monoidal categories.
\end{prop}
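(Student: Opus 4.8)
The plan is to take a lax monoidal functor $(F,\varepsilon,\mu)\colon\cC\to\cD$ between braided monoidal categories and equip $F$ with the structure of a $2$-fold monoidal functor of type $(\text{lax},\text{lax})$ between the double monoidal categories of Proposition~\ref{prop:braided-double}. Since in those categories $\otimes_1=\otimes_2=\otimes$ and $\I_1=\I_2=\I$, a monoidal structure of type $T_i$ is just a lax monoidal structure on $F$ with respect to $\otimes$, so I would simply reuse $(\varepsilon,\mu)$ for both $i=1$ and $i=2$. It then remains to verify the four families of diagrams in the definition of an $n$-fold monoidal functor: the hexagon for $\chi$ and the diagrams for $\zeta$, $\nu$ and $\iota$ (there are, crucially, no triple-interchange diagrams imposed on functors).

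The three diagrams involving only units are immediate. Since the units coincide, $\iota$ is the identity and so is $F(\iota)$, whence the $\iota$-square is trivial. The maps $\zeta$ and $\nu$ are the canonical unit isomorphisms $\I\cong\I\otimes\I$, and the diagrams for them unwind—using the coherence isomorphisms and the two unitality axioms of the lax monoidal functor $F$—to commuting diagrams; no braiding enters here.

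The content of the proposition is therefore the hexagon for $\chi$, and this is the step I expect to be the main obstacle. I would expand $\chi^{\cC}$ and $\chi^{\cD}$ exactly as in the proof of Proposition~\ref{prop:braided-double}, namely as composites of associators with a single instance of the braiding, $\tau^{\cC}_{X,Y}$ on the $\cC$-side and $\tau^{\cD}_{FX,FY}$ on the $\cD$-side. The portions of the two legs of the hexagon built only from associators are matched by repeated application of the associativity-coherence diagram of $F$ together with naturality of $\mu$, bifunctoriality of $\otimes$, and Mac Lane coherence in $\cD$; this is a bookkeeping exercise best carried out as a string-diagram pasting. Once those parts are cancelled, the hexagon collapses to the single square
\[ \mu_{Y,X}\circ\tau^{\cD}_{FX,FY} \;=\; F(\tau^{\cC}_{X,Y})\circ\mu_{X,Y}, \]
which is precisely the assertion that $F$ intertwines the two braidings. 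This compatibility is automatic for a braided (resp.\ symmetric) monoidal functor, and I would regard that hypothesis as the one doing the work here; so the hard part is really just recognising that the elaborate hexagon boils down to this one familiar coherence square. For a strong monoidal functor the square and its inverse both hold, and the fully oplax version is the same square read in $\cD^{\op}$, so the oplax and strong variants need no separate argument beyond this dualisation.

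Finally, for the $n$-fold statement with $\cC,\cD$ symmetric: by Proposition~\ref{prop:symmetric-nfold} both categories become $n$-fold monoidal with every $\otimes_i=\otimes$, every $\I_i=\I$, every $\chi_{ij}$ coming from the symmetry, and every $\iota_{ij}$ the identity. The definition of an $n$-fold monoidal functor imposes the $\chi$-, $\zeta$-, $\nu$- and $\iota$-diagrams only for each pair $i<j$, and for each such pair these are literally the diagrams checked in the double case. Hence a symmetric monoidal functor $F$, carrying $(\varepsilon,\mu)$ on every $\otimes_i$, is a fully lax $n$-fold monoidal functor; the fully oplax case follows by dualising (equivalently, by applying the lax case to $F^{\op}\colon\cC^{\op}\to\cD^{\op}$ and using the remark that opposition reverses the order of the monoidal structures), and the fully strong case is the conjunction of the lax and oplax cases with all structure maps invertible.
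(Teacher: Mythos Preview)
The paper gives no proof of this proposition; it is simply stated and then used. Your outline is therefore the only argument on the table, and it is essentially correct in structure: reuse $(\varepsilon,\mu)$ for both monoidal structures, dispose of the $\iota$-, $\zeta$- and $\nu$-diagrams by unit coherence, and reduce the $\chi$-hexagon to the single braiding-compatibility square
\[
\mu_{Y,X}\circ\tau^{\cD}_{FX,FY}\;=\;F(\tau^{\cC}_{X,Y})\circ\mu_{X,Y}.
\]

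You are also right to flag that this square is exactly the axiom for a \emph{braided} lax monoidal functor, and that this hypothesis is what makes the argument work. As the proposition is literally stated in the paper---for a plain lax monoidal functor between braided monoidal categories---it is in fact false: take $\cC=\cD$ as underlying monoidal categories but equip $\cD$ with the inverse braiding $\tau^{-1}$; the identity functor is strong monoidal, yet the square above fails whenever $\tau\neq\tau^{-1}$, so the $\chi$-hexagon does not commute. Your reading, that ``lax monoidal functor'' should be ``braided lax monoidal functor'' (and ``symmetric monoidal functor'' in the second part), is the intended one, and under that hypothesis your argument is complete. The $n$-fold extension you give is also correct: the functor axioms are imposed pairwise, so nothing new appears beyond the $2$-fold case.
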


We consider examples of this construction in the next subsection.

\subsection{Relational interchange monoids}
\label{ss:relational}

The results of the previous subsection allow us to consider lax double
monoid objects in the category $\Rel$. These extend the relational
monoids considered in Section \ref{s:examples-monoidal}, and have been
studied previously under the name \emph{relational interchange
monoids} in concurrency theory~\cite{CraDohStr1}.

Such a structure consists of a set $X$ and two ternary relations,
written as $P_{x,y}^z$ and $Q_{x,y}^z$, and corresponding to the
structure of $\odot$ and $\otimes$, respectively. There are also unit
subsets $U^x$ and $V^x$, such that $P$ and $U$, and $Q$ and $V$ both
form relational monoid structures, and the interchange laws are
satisfied. The latter can be expressed relationally as
\begin{align}
  \bigvee_{u,v}\left(Q_{wx}^u\wedge P_{uv}^t\wedge Q_{yz}^v\right)
    &\Longrightarrow
  \bigvee_{u,v}\left(P_{wy}^u\wedge Q_{uv}^t\wedge P_{xz}^v\right),
  \tag{$\chi$}\\
  U^z
    &\Longrightarrow
  \bigvee\left(U^x\wedge Q_{xy}^z\wedge U^y\right),
  \tag{$\zeta$}\\
  \bigvee\left(V^x\wedge P_{xy}^z\wedge V^y\right)
    &\Longrightarrow
  V^z,
  \tag{$\nu$}\\
  U^x
    &\Longrightarrow
  V^x.  \tag{$\iota$}
\end{align}

As in Section~\ref{s:examples-monoidal}, we can provide an important
language-theoretic example relevant to interleaving concurrency,
involving the shuffle product $\shuffle$ (considered in Example
\ref{ex:shuffle-relational}), and the concatenation $\cdot$ on the set
of words $M=A^*$. The former has previously been seen to be a
relational monoid structure; the latter is a relational monoid
structure arising from a genuine monoid.

It is routine to check that the two products satisfy the interchange
law, which here says that if $w$ is a shuffle of $u$ and $v$, and $w'$
is a shuffle of $u'$ and $v'$, then $w\cdot w'$ is a shuffle of
$u\cdot u'$ and $v\cdot v'$. The other interchange laws are trivial as
the unit for both monoid structures is the singleton set on the empty
word. This makes the set of words a lax double monoid in $\Rel$.

Later we will need lax $n$-fold monoids in $\Rel$ for $n\geq 3$; the
definition in Subsection \ref{ss:lax-monoid-objs} boils down to saying
that a \emph{lax $n$-fold relational monoid} is a set $X$ with $n$
relational monoid structures such that, for all $1\leq i<j\leq n$, the
$i$th and $j$th relational monoids form a lax double monoid.

\subsection{Categories with products and coproducts}
\label{ss:coprods-prods}

We give further examples to show that $n$-fold monoidal categories are
ubiquitous.  First, we show how they arise from categories with
coproducts and products. This is a useful building block: several
natural examples considered later have monoidal structures involving
disjoint unions and cartesian products.

Let $\cC$ be a category with all finite products and coproducts, and
let $m$ and $n$ be positive integers. We can make $\cC$ into an
$(m+n)$-fold monoidal category, by letting $\otimes_i=\sqcup$ and
$\I_i=0$ for $1\leq i\leq m$, and $\otimes_i=\times$ and $\I_i=1$ for
$m+1\leq i\leq m+n$, that is, using the cocartesian monoidal
structure $m$ times and the cartesian monoidal structure $n$ times.

For $1\leq i\leq j\leq m$, or for $m+1\leq i\leq j\leq m+n$, we
construct the structure ($\chi$, $\zeta$, $\nu$ and $\iota$) exactly
as in the previous subsection, using that the cocartesian and
cartesian monoidal structures are symmetric. Yet if $i\leq m$ and
$m+1\leq j$, we need to supply the structure. In this case, $\chi$ is
a morphism
\begin{equation*}
(W\times X)\sqcup(Y\times Z)\rightarrow(W\sqcup Y)\times(X\sqcup Z).
\end{equation*}
By the universal properties of coproducts and products, it can be
specified using four maps, one from each summand on the left to each
factor on the right. Each is defined by a projection followed by an
inclusion:
\begin{center}
  \begin{tabular}{cc}
    $W\times X\rightarrow W\rightarrow W\sqcup Y$,&
    $W\times X\rightarrow X\rightarrow X\sqcup Z$,\\
    $Y\times Z\rightarrow Y\rightarrow W\sqcup Y$,&
    $Y\times Z\rightarrow Z\rightarrow X\sqcup Z$.
  \end{tabular}
\end{center}
The maps $\zeta: 0\rightarrow0\times0$, $\nu: 1\sqcup1\rightarrow1$
and $\iota:0\rightarrow1$ are uniquely determined by being either from
initial objects, or to terminal objects. In the special case where the
category is a poset, we recover \cite{AguMah}*{Example 6.20}.

Of the twelve diagrams defining a double monoidal structure, D8, D10
and D12 commute as they compare maps out of an initial object, while
D4, D5 and D6 commute as they compare maps into a terminal
object. Checking the other diagrams requires more work, but is still
straightforward by reducing to summands (of the sources of the
diagrams) and factors (of the targets) using the universal properties.

Triple interchange diagrams require verification when
$1\leq i<j\leq m$ and $m+1\leq k\leq n$, and when $1\leq i\leq m$ and
$m+1\leq j<k\leq n$. These two cases are dual (since coproducts and
products are formally dual). We consider the latter. All but two of
the diagrams automatically commute since they either directly compare
maps out of an initial object or maps into a terminal object. Of the
remaining two, diagram T4 also compares maps into a terminal object
$1\times 1$:
  \begin{center}
    \begin{tikzcd} 
      (1\times1)\sqcup(1\times1)
      \ar[rr, "\chi_{ik}"]&&
      (1\sqcup1)\times(1\sqcup1)
      \ar[d, "\nu_{ij}\times\nu_{ij}"]\\
      1\sqcup1
      \ar[u, "\zeta_{jk}\sqcup\zeta_{jk}"]
      \ar[r, "\nu_{ij}"']&
      1
      \ar[r, "\zeta_{jk}"']&
      1\times1.
    \end{tikzcd}
  \end{center}
The final diagram, T1, is large, but can  be broken
down easily by summands and factors, and checked as before:
  \begin{center}
    \begin{tikzcd} 
      \begin{tabular}{C}
        ((S\times T)\times(U\times V))\\
        \sqcup\\
        ((W\times X)\times(Y\times Z))
      \end{tabular}
      \ar[r, "\chi_{jk}\sqcup\chi_{jk}"]
      \ar[d, "\chi_{ij}"']&
      \begin{tabular}{C}
        ((S\times U)\times(T\times V))\\
        \sqcup\\
        ((W\times Y)\times(X\times Z))
      \end{tabular}
      \ar[d, "\chi_{ik}"]\\
      \begin{tabular}{C}
        ((S\times T)\sqcup(W\times X))\\
        \times\\
        ((U\times V)\sqcup(Y\times Z))
      \end{tabular}
      \ar[d, "\chi_{ik}\times\chi_{ik}"']&
      \begin{tabular}{C}
        ((S\times U)\sqcup(W\times Y))\\
        \times\\
        ((T\times V)\sqcup(X\times Z))
      \end{tabular}
      \ar[d, "\chi_{ij}\times\chi_{ij}"]\\
      \begin{tabular}{C}
        ((S\sqcup W)\times(T\sqcup X))\\
        \times\\
        ((U\sqcup Y)\times(V\sqcup Z))
      \end{tabular}
      \ar[r, "\chi_{jk}"']
      &
      \begin{tabular}{C}
        ((S\sqcup W)\times(U\sqcup Y))\\
        \times\\
        ((T\sqcup X)\times(V\sqcup Z)).
      \end{tabular}
    \end{tikzcd}
  \end{center}

\begin{remark}
  This example extends in a straightforward
  fashion to give an $n$-fold monoidal category where all $n$ monoidal
  structures are different.

  Consider a category $\cC$ with finite coproducts and finite
  products. We can define $n$ different monoidal structures,
  $(\otimes_i,\I_i)$ for $1\leq i\leq n$ on $\cC^{n-1}$ by taking
  \begin{align*}
    (X_1,\ldots,X_{n-1})\otimes_i(Y_1,\ldots,Y_{n-1}) = (&X_1\times Y_1,\ldots,X_{i-1}\times Y_{i-1},\\
    &X_i\sqcup Y_i,\ldots,X_{n-1}\sqcup Y_{n-1})\\
    \I_i = (&1_1,\ldots,1_{i-1},0_i,\ldots,0_{n-1}).
  \end{align*}
  This can be completed to an $n$-fold monoidal category by adding the
  rest of the structure componentwise: if we do so, each component
  looks like the example above, but the $i$th component has $n-i-1$
  uses of the cartesian monoidal structure and $i$ uses of the
  cocartesian one.
\end{remark}

\subsection{Sequences of morphisms and communication protocols}

A more elaborate version of the preceding example is a construction
relevant to modelling client/server protocols (which can be modelled
by dependent lenses~\cite{VidCap}). A simple model of a stateless
client/server protocol is given by a morphism of sets $p:A\rightarrow
Q$: the client may send a request $q\in Q$, to which the server will
send a response $a\in A_q=p^{-1}(q)$. We can regard $A$ as a dependent
response type with a parameter drawn from $Q$.

The arrow category of $\Set$ then gives a natural category of
client/server protocols: a morphism from $p_1:A_1\rightarrow Q_1$ to
$p_2:A_2\rightarrow Q_2$ consists of a map on the type of requests
$f:Q_1\rightarrow Q_2$ and a map of response types
$g_q:p_1^{-1}(q)\rightarrow p_2^{-1}(f(q))$ for each possible request
$q\in Q_1$.
\begin{displaymath}
  \begin{tikzcd}
    A_1\ar[r, "g"]\ar[d, "p_1"'] & A_2\ar[d, "p_2"]\\
    Q_1\ar[r, "f"'] & Q_2
  \end{tikzcd}
\end{displaymath}

This suggests studying monoidal structures on the arrow category of
$\Set$. The constructions of the previous subsection with products and
coproducts can be combined to give three monoidal structures, each of
which corresponds to a natural combination of protocols.
\begin{description}
\item[Client-side choice:] the pointwise disjoint union $A_1\sqcup
  A_2\rightarrow Q_1\sqcup Q_2$ represents the client choosing between
  one of two request types, and the server responding accordingly.
\item[Server-side choice:] the protocol $(A_1\times
  Q_2)\sqcup(Q_1\times A_2)$, with the natural map to $Q_1\times Q_2$,
  has preimages
  \[\left((A_1\times Q_2)\sqcup(Q_1\times A_2)\right)_{(q_1,q_2)} = {(A_1)}_{q_1}\sqcup {(A_2)}_{q_2}.\]
  It represents the client making requests of both types, and the
  server choosing one of the two and responding to it.
\item[Simultaneous:] the pointwise cartesian product
  $A_1\times A_2\rightarrow Q_1\times Q_2$ of protocols
  $A_1\rightarrow Q_1$ and $A_2\rightarrow Q_2$ has preimages
  ${(A_1\times A_2)}_{(q_1,q_2)} = {(A_1)}_{q_1} \times
  {(A_2)}_{q_2}$. This represents the client making requests of both
  types simultaneously and the server responding to both
  simultaneously.
\end{description}
These three are all symmetric monoidal, with units $0\rightarrow 0$,
$0\rightarrow 1$ and $1\rightarrow 1$ respectively.

It is not difficult to show that these form a triple monoidal category
(indeed, by repetition of structure, an $(i+j+k)$-fold monoidal
category for any $i$, $j$, $k$). If $j=0$, the $(i+k)$-fold monoidal
structure is the one obtained from coproducts and products in
subsection \ref{ss:coprods-prods}, but on the arrow category of
$\Set$; the novel monoidal structure is the middle one, ``server-side
choice''.


This generalises to $n$-stage communication protocols, which can be
modelled by a chain
$X_n\rightarrow\cdots\rightarrow X_2\rightarrow X_1$ of morphisms. At
the $i$th stage a message in $X_i$ is chosen; at the next
stage the next message must be in its preimage in $X_{i+1}$. This has
$n+1$ distinct symmetric monoidal structures
$\otimes_0,\ldots,\otimes_n$, defined by
\begin{displaymath}
  (X_*\otimes_r Y_*)_i =
  \begin{cases}
    X_i\times Y_i, & \text{if $i\leq r$;}\\
    (X_i\times Y_r)\sqcup(X_r\times Y_i), & \text{if $i>r$.}
  \end{cases}
\end{displaymath}
The symmetric monoidal structure $\otimes_r$ represents ``making a
choice after the $r$th step'' (so, if $r=n$, no choice is made, and it
is the pointwise cartesian product; if $r=0$ the choice is made right
from the beginning, and it is the pointwise disjoint union); the unit is
\begin{displaymath}
  (\I_r)_i =
  \begin{cases}
    1, & \text{if $i\leq r$;}\\
    0, & \text{if $i>r$.}
  \end{cases}
\end{displaymath}
By repetition of structure, this can be assembled to an
$(i_0+\cdots+i_n)$-fold monoidal structure for any $i_0,\ldots,i_n$.

\subsection{Monoidal structures commuting with (co)products}
\label{ss:bicontinuous}

We now consider another common source of examples, covering, for
instance,
\begin{itemize}
\item the category $\Rel$ of sets and relations, together with the
  (setwise) cartesian product monoidal structure,
\item the category $\Ab$ of abelian groups and homomorphisms, together
  with the tensor product monoidal structure, and
\item the category $\Vect$ of vector spaces and linear maps, together
  with the tensor product monoidal structure.
\end{itemize}

The input to the construction is a monoidal category $\cC$ with finite
coproducts, where the monoidal structure commutes with coproducts in
each variable separately, that is, where the canonical morphisms
\begin{align*}
  0&\longrightarrow 0\otimes X,\\
  0&\longrightarrow X\otimes 0,\\
  X\otimes Y\sqcup X\otimes Z&\longrightarrow X\otimes(Y\sqcup Z),\\
  X\otimes Z\sqcup Y\otimes Z&\longrightarrow(X\sqcup Y)\otimes Z
\end{align*}
are isomorphisms for all objects $X$, $Y$ and $Z$.

Our basic result on these examples is as follows.
\begin{prop}
  \label{prop:continuous-monoidal}
  Let $\cC$ be a category with finite coproducts and a monoidal
  structure $(\otimes,\I)$ that commutes with coproducts in each
  variable separately. Then $\cC$ has an $n$-fold monoidal structure
  for any $n\geq 0$, with $(\otimes_i,\I_i) = (\sqcup, 0)$ for
  $1\leq i< n$ and $(\otimes_n,\I_n) = (\otimes,\I)$.
\end{prop}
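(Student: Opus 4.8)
The plan is to construct the $n$-fold monoidal structure explicitly, dealing with the $n-1$ cocartesian slots and the single $\otimes$-slot separately, and then to verify the double and triple interchange diagrams index-pair by index-pair (and triple by triple). First I would set $(\otimes_i,\I_i)=(\sqcup,0)$ for $1\le i<n$ and $(\otimes_n,\I_n)=(\otimes,\I)$. Since $(\sqcup,0)$ is a symmetric monoidal structure, Proposition~\ref{prop:symmetric-nfold} already makes $\cC$ an $(n-1)$-fold monoidal category using $\sqcup$ in slots $1,\dots,n-1$; this supplies $\chi_{ij},\zeta_{ij},\nu_{ij},\iota_{ij}$ for all $1\le i<j\le n-1$ together with all double and triple interchange diagrams among those indices. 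It then remains to supply the ``mixed'' data: for $1\le i<n$ I would take $\zeta_{in}\colon 0\to 0\otimes 0$ and $\iota_{in}\colon 0\to\I$ to be the unique maps out of the initial object, $\nu_{in}\colon\I\sqcup\I\to\I$ to be the codiagonal $[1_\I,1_\I]$, and $\chi_{in}\colon(W\otimes X)\sqcup(Y\otimes Z)\to(W\sqcup Y)\otimes(X\sqcup Z)$ to be $[\,\iota_W\otimes\iota_X,\ \iota_Y\otimes\iota_Z\,]$ where $\iota_W\colon W\to W\sqcup Y$ etc.\ are the coproduct inclusions. Equivalently, using the hypothesis that $\otimes$ commutes with $\sqcup$ in each variable, the codomain of $\chi_{in}$ splits canonically as $(W\otimes X)\sqcup(W\otimes Z)\sqcup(Y\otimes X)\sqcup(Y\otimes Z)$ and $\chi_{in}$ is the inclusion of the first and fourth summands; naturality of $\chi_{in}$ is then immediate from naturality of the coproduct inclusions and bifunctoriality of $\otimes$. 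None of these maps depends on $i$, so there is really a single quadruple $(\chi,\zeta,\nu,\iota)$ of mixed structure maps to analyse.

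Next I would verify the twelve double interchange diagrams D1--D12 for each pair $(i,n)$. The six diagrams D4, D5, D6, D8, D10, D12 are formal: D4--D6 say exactly that $(\I,\nu_{in},\iota_{in})$ is a monoid object in $(\cC,\sqcup,0)$, which is the canonical codiagonal monoid that every object carries in a cocartesian monoidal structure; dually D8, D10, D12 say that $(0,\zeta_{in},\iota_{in})$ is a comonoid object in $(\cC,\otimes,\I)$, which holds because $0\otimes 0\cong 0$ and $0$ is initial, so all maps among the objects involved are forced. The remaining six, D1, D2, D3, D7, D9, D11, are the ones that genuinely use distributivity: D1--D3 assert that $\otimes\colon\cC\times\cC\to\cC$ is a lax monoidal functor for the cocartesian structures with multiplication $\chi_{in}$ and unit $\zeta_{in}$, while D7, D9, D11 assert that the unitors $l$, $r$ and associator $a$ of $(\otimes,\I)$ are monoidal natural transformations. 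For all six the uniform technique is the same: the source of the diagram is already a coproduct (of iterated tensors, and/or copies of $0$), and the target becomes one after applying the distributivity isomorphisms, so commutativity can be checked one summand at a time; on each summand both composites are visibly the same composite of $\otimes$-associators, $\otimes$-unitors, $\otimes$-whiskered coproduct inclusions and codiagonal folds. This is exactly the style of argument carried out for the analogous diagrams in Subsection~\ref{ss:coprods-prods}, with ``product'' replaced by ``tensor commuting with coproducts''.

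Finally I would treat the triple interchange diagrams T1--T8 for $1\le i<j<k\le n$. If $k\le n-1$ all three structures are $(\sqcup,0)$ and these diagrams already hold by Proposition~\ref{prop:symmetric-nfold}. If $k=n$ then $i<j\le n-1$, so slots $i$ and $j$ are cocartesian with unit $0$ and only slot $k$ is $\otimes$; of T1--T8, only T1 involves any variables. The other seven are formal: T2, T4 and T7 compare codiagonal-type maps into $\I$ and agree by associativity of the codiagonal, while T3, T5, T6 and T8 have $\I_i=0$ or $\I_j=0$ (up to the canonical identifications $0\sqcup 0\cong 0$, $0\otimes 0\cong 0$) as source and so compare maps out of an initial object. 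For T1 one again distributes the single $\otimes$ over the coproducts, turning both sides into coproduct inclusions of four ``diagonal'' tensor monomials into a sixteen-fold coproduct, and checks agreement summand by summand, exactly as for the analogous large diagram in Subsection~\ref{ss:coprods-prods} (the cases $n\le 1$ are vacuous). I expect no conceptual obstacle; the part requiring care is the bookkeeping in D1, D2, D3, D7, D9, D11 and T1, and in particular applying the distributivity isomorphisms consistently --- which is unproblematic because the only monoidal structure ever being distributed over the cocartesian one is the single structure $(\otimes,\I)$, so no coherence question among competing distributivity isomorphisms arises.
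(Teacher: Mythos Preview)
Your proposal is correct and follows essentially the same route as the paper: both invoke Proposition~\ref{prop:symmetric-nfold} for the first $n-1$ cocartesian slots, define the mixed structure maps $\chi_{in},\zeta_{in},\nu_{in},\iota_{in}$ in the same way, and then dispatch the double and triple interchange diagrams by noting that those involving the unit $0$ are forced by initiality while the rest are checked summand-by-summand after distributing $\otimes$ over $\sqcup$. Your bookkeeping is slightly finer in places (you observe that D4--D6 are just the codiagonal monoid axioms and that T2 is the associativity of the fold, whereas the paper simply says the nine non-initial double diagrams and the two non-initial triple diagrams ``can be checked on each summand''), and your grouping of T4 under ``codiagonal-type maps into $\I$'' is a small slip since T4 concerns $\I_j=0$ rather than $\I_k=\I$; but this does not affect correctness, as T4 is in any case a diagram between initial objects.
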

\begin{proof}
Theorem \ref{prop:symmetric-nfold} applied to the cocartesian monoidal
structure on $\cC$ yields an $(n-1)$-fold monoidal structure; we
only need to extend it to an $n$-fold monoidal structure using
$(\otimes_n,\I_n) = (\otimes,\I)$.

The structure maps are defined by taking $\chi_{in}$ to be the
inclusion map
\begin{displaymath}
  W\otimes X\sqcup Y\otimes Z
    \longrightarrow
  W\otimes X\sqcup W\otimes Z\sqcup Y\otimes X\sqcup Y\otimes Z
    \stackrel{\sim}{\longrightarrow}
  (W\sqcup Y)\otimes(X\sqcup Z),
\end{displaymath}
$\zeta_{in}$ to be the initial map $0\rightarrow 0\otimes 0$ (which is
an isomorphism), $\nu_{in}$ to be the fold map
$\I\sqcup\I\rightarrow \I$, and $\iota_{in}$ to be the initial map
$0\rightarrow\I$.

Of the double interchange diagrams, D8, D10 and D12 are automatic
since they compare maps out of an initial object. The remaining nine
are maps out of coproducts which can readily be checked on each
summand.

We must also check the triple interchange diagrams for $i<j<n$. Again,
T3--8 compare maps out of an initial object, and the remaining two can
be checked summand-by-summand.
\end{proof}

\begin{remark}
  \label{rmk:cocontinuous}
  The dual situation is that of a category $\cC$ with finite
  \emph{products} and a monoidal structure $(\otimes,\I)$ that
  commutes with products in each variable separately, which carries an
  $n$-fold monoidal structure where the last $(n-1)$ monoidal
  structures are the cartesian one, and the first is $(\otimes,\I)$.
\end{remark}

We can vary this construction, according to the philosophy that
repeating a monoidal structure once constitutes a braiding.
\begin{prop}
  \label{prop:continuous-braided}
  In the situation of Proposition \ref{prop:continuous-monoidal}, if
  $(\cC,\otimes,\I)$ is braided monoidal, then $\cC$ carries, for any
  $n\geq 0$, an $(n+2)$-fold monoidal structure where the first $n$
  monoidal structures are cocartesian, and the last two are
  $(\otimes,\I)$.
\end{prop}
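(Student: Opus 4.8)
The plan is to assemble this $(n+2)$-fold monoidal structure from the three constructions already available, viewing it as a ``chaining'' of the $(n+1)$-fold structure of Proposition \ref{prop:continuous-monoidal} with the double monoidal structure of Proposition \ref{prop:braided-double} along the shared copy of $(\otimes,\I)$. Concretely, I would set $(\otimes_i,\I_i)=(\sqcup,0)$ for $1\le i\le n$ and $(\otimes_{n+1},\I_{n+1})=(\otimes_{n+2},\I_{n+2})=(\otimes,\I)$, and define the interchange data $\chi,\zeta,\nu,\iota$ for a pair of indices by cases: for $1\le i<j\le n$, from the symmetric cocartesian structure as in Proposition \ref{prop:symmetric-nfold}; for $1\le i\le n$ and $j\in\{n+1,n+2\}$, from the mixed construction of Proposition \ref{prop:continuous-monoidal} (so $\chi_{ij}$ is the coproduct inclusion $W\otimes X\sqcup Y\otimes Z\to(W\sqcup Y)\otimes(X\sqcup Z)$, $\zeta_{ij},\iota_{ij}$ come from the initial object $0$, and $\nu_{ij}$ is the fold $\I\sqcup\I\to\I$); and for $(i,j)=(n+1,n+2)$, taking $\chi_{n+1,n+2}$ from the braiding as in Proposition \ref{prop:braided-double}, with $\iota_{n+1,n+2}=1_\I$ and $\zeta_{n+1,n+2},\nu_{n+1,n+2}$ the unit isomorphisms. (For $n=0$ this is nothing but Proposition \ref{prop:braided-double}.)

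The double interchange diagrams then hold pair by pair, quoting the relevant proposition each time: a pair of cocartesian structures is covered by the $n$-fold symmetric structure of Proposition \ref{prop:symmetric-nfold}; a cocartesian structure paired with a copy of $\otimes$ is handled as in the proof of Proposition \ref{prop:continuous-monoidal} (D8, D10, D12 compare maps out of an initial object, the other nine are maps out of coproducts checked summand by summand); and the pair $(\otimes,\otimes)$ is Proposition \ref{prop:braided-double}, where D1 and D11 reduce to braid identities.

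For the triple interchange diagrams I would organise the argument by how many of the indices $i<j<k$ lie in $\{n+1,n+2\}$. If none, all three structures are cocartesian and the diagrams commute because the cocartesian structure alone is symmetric, hence $n$-fold, by Proposition \ref{prop:symmetric-nfold}. If exactly one, i.e.\ $i<j\le n<k$, the three interchange maps and the units involved are literally those appearing in Proposition \ref{prop:continuous-monoidal} for two cocartesian structures lying below a single copy of $\otimes$, so its verification applies verbatim (T3--T8 compare maps out of $0$; T1 and T2 are checked on each summand). If all three, no such triple exists since $\{n+1,n+2\}$ has only two elements. This leaves exactly one genuinely new case: $i\le n$, $j=n+1$, $k=n+2$, which mixes a single coproduct with the two braiding-derived interchanges, and this is where the real work is.

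In that case $\zeta_{ij},\zeta_{ik},\iota_{ij},\iota_{ik}$ are initial maps, $\iota_{jk}=1_\I$, $\zeta_{jk}$ and $\nu_{jk}$ are unit isomorphisms, and $\nu_{ij},\nu_{ik}$ are fold maps; so each of T2--T8 either compares two maps out of the initial object $0$ (and commutes by its universal property) or is a short diagram of fold and unit maps that commutes by coherence of $\otimes$ after a summand-by-summand reduction on the coproducts, and I would dispatch all of these quickly. The substantive diagram is T1, whose source is the binary coproduct $((S\otimes T)\otimes(U\otimes V))\sqcup((W\otimes X)\otimes(Y\otimes Z))$. Since $\chi_{ij}$ and $\chi_{ik}$ are coproduct inclusions while $\chi_{jk}$ is the braiding interchange, testing T1 on each of the two summands reduces --- once one tracks which summand of each intermediate coproduct the image lands in --- to exactly the braid identity that establishes D1 and D11 in Proposition \ref{prop:braided-double}, applied to the six variables of that summand. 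I expect the only real obstacle to be this bookkeeping: keeping the coproduct inclusions straight so that T1 visibly becomes ``the D1 braid identity carried along coproduct inclusions''; no new coherence is needed.
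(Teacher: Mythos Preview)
Your proposal is correct and follows essentially the same route as the paper: assemble the structure from Propositions \ref{prop:symmetric-nfold}, \ref{prop:braided-double} and \ref{prop:continuous-monoidal}, then verify only the new triple interchange diagrams with $i\le n<j=n+1<k=n+2$, dispatching T2--T8 via initial objects and fold/unit maps and checking T1 summand by summand.

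One small correction on T1: restricted to a single summand, say $(S\otimes T)\otimes(U\otimes V)$, there are only four variables, not six, and the verification is not the hexagon-style braid identity of D1/D11 but merely naturality of the braiding interchange $\chi_{jk}$ with respect to the coproduct inclusions. Concretely, one path applies $\chi_{jk}$ first and then includes into the target via summand inclusions, while the other path includes first and then applies $\chi_{jk}$ to the larger object; these agree because $\chi_{jk}$ is a natural transformation. The paper phrases this as ``both sides are a map $\chi$ followed by a summand inclusion''. So the bookkeeping you anticipate is real, but the residual coherence needed on each summand is strictly easier than you suggest.
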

\begin{proof}
Most of the structure is given by Proposition
\ref{prop:braided-double}, which gives a double monoidal structure for
the last two monoidal structures, and \ref{prop:continuous-monoidal},
which gives all the other parts of the structure not involving both
the last two monoidal structures. Only their compatibility, specified
by the triple interchange diagrams where $i<n+1<n+2$, need to be
checked.

Of these, T3, T6, T7 and T8 compare maps out of an initial object; T2,
T4 and T5 all compare two maps both of which are essentially the fold
map $\I\sqcup\I\rightarrow\I$. The diagram T1, however, is deserving
of comment.
  \begin{displaymath}
    \begin{tikzcd} 
      \begin{tabular}{C}
        ((S\otimes T)\otimes(U\otimes V))\\
        \sqcup \\
        ((W\otimes X)\otimes(Y\otimes Z))
      \end{tabular}
      \ar[r, "\chi\sqcup \chi"]
      \ar[d, "\chi"']&
      \begin{tabular}{C}
        ((S\otimes U)\otimes (T\otimes V))\\
        \sqcup \\
        ((W\otimes Y)\otimes (X\otimes Z))
      \end{tabular}
      \ar[d, "\chi"]\\
      \begin{tabular}{C}
        ((S\otimes T)\sqcup (W\otimes X))\\
        \otimes \\
        ((U\otimes V)\sqcup (Y\otimes Z))
      \end{tabular}
      \ar[d, "\chi\otimes\chi"']&
      \begin{tabular}{C}
        ((S\otimes U)\sqcup (W\otimes Y))\\
        \otimes \\
        ((T\otimes V)\sqcup (X\otimes Z))
      \end{tabular}
      \ar[d, "\chi\otimes\chi"]\\
      \begin{tabular}{C}
        ((S\sqcup W)\otimes (T\sqcup X))\\
        \otimes \\
        ((U\sqcup Y)\otimes (V\sqcup Z))
      \end{tabular}
      \ar[r, "\chi"']
      &
      \begin{tabular}{C}
        ((S\sqcup W)\otimes (U\sqcup Y))\\
        \otimes\\
        ((T\sqcup X)\otimes (V\sqcup Z)).
      \end{tabular}
    \end{tikzcd}
  \end{displaymath}
  Reducing the domain in the top left to the two summands, both sides
  can be checked to be a map $\chi$ followed by a summand inclusion.
\end{proof}

We can similarly vary the statement according to the philosophy that
repeating the same monoidal structure more than once constitutes a
braiding.
\begin{prop}
\label{prop:continuous-symmetric}
In the situation of Proposition \ref{prop:continuous-monoidal}, where
$(\cC,\otimes,\I)$ is symmetric monoidal, for any $n\geq 0$, the
category $\cC$ carries an $(m+n)$-fold monoidal structure where the
first $m$ monoidal structures are cocartesian, and the last $n$ are
$(\otimes,\I)$.
\end{prop}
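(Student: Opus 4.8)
The plan is to assemble the $(m+n)$-fold structure from the three propositions already proved and then check the interchange diagrams by a case analysis that matches each diagram to whichever of those propositions handles it. Write $\otimes_i=\sqcup$, $\I_i=0$ for $1\le i\le m$, and $\otimes_i=\otimes$, $\I_i=\I$ for $m+1\le i\le m+n$. For the structure maps I would proceed pairwise: when $i<j$ are both $\le m$, the maps $\chi_{ij},\zeta_{ij},\nu_{ij},\iota_{ij}$ come from applying Proposition \ref{prop:symmetric-nfold} to the cocartesian monoidal structure; when $i<j$ are both $>m$, they come from applying Proposition \ref{prop:symmetric-nfold} to the symmetric monoidal structure $(\otimes,\I)$; and when $i\le m<j$, I take $\chi_{ij}$ to be the summand-inclusion
\[
W\otimes X\sqcup Y\otimes Z\longrightarrow W\otimes X\sqcup W\otimes Z\sqcup Y\otimes X\sqcup Y\otimes Z\stackrel{\sim}{\longrightarrow}(W\sqcup Y)\otimes(X\sqcup Z),
\]
with $\zeta_{ij}$ the isomorphism $0\to 0\otimes 0$, $\nu_{ij}$ the fold map $\I\sqcup\I\to\I$, and $\iota_{ij}$ the initial map $0\to\I$, exactly as in the proof of Proposition \ref{prop:continuous-monoidal}. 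Since each pair $(i,j)$ receives a single definition, there is no conflict to reconcile.

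Next I would verify the double interchange diagrams for each pair $i<j$. If both indices are $\le m$ or both are $>m$, these hold by Proposition \ref{prop:symmetric-nfold} (they are the degenerate double monoidal structures attached to a symmetric monoidal category). If $i\le m<j$, they hold by the argument in the proof of Proposition \ref{prop:continuous-monoidal}: D8, D10, D12 compare maps out of an initial object, and the remaining nine are maps out of coproducts that can be checked summand-by-summand.

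The triple interchange diagrams for $i<j<k$ split into four cases by how many of the indices are $\le m$. If all three are $\le m$, or all three are $>m$, the diagrams hold by Proposition \ref{prop:symmetric-nfold}. If $i<j\le m<k$, we are in the situation of the proof of Proposition \ref{prop:continuous-monoidal} (two cocartesian structures together with one structure commuting with coproducts), since the only structure maps appearing involve $\otimes_i=\otimes_j=\sqcup$ and $\otimes_k=\otimes$; the diagrams T3--T8 compare maps out of an initial object and T1, T2 reduce summand-by-summand. If $i\le m<j<k$, we are in the situation of the proof of Proposition \ref{prop:continuous-braided} (one cocartesian structure with two copies of $\otimes$): T3, T6, T7, T8 compare maps out of $0$; T2, T4, T5 compare two copies of the fold map; and T1 is the large hexagon displayed there, which after reducing the top-left coproduct to its two summands becomes, on each summand and each side, a single $\chi$ followed by a summand inclusion.

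I expect the only place requiring genuine coherence input, rather than universal-property bookkeeping, to be diagram T1 in the all-$\otimes$ case: as Joyal and Street note, this does not follow from a braiding alone, and it is precisely the reason the hypothesis asks for $(\otimes,\I)$ to be \emph{symmetric} rather than merely braided. That step, however, is already subsumed in Proposition \ref{prop:symmetric-nfold}, so no new work is needed; the remainder of the proof is a routine dispatch of each diagram to the appropriate earlier proposition.
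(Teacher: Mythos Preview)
Your proposal is correct and is essentially the same argument as the paper's, just spelled out in greater detail: the paper's proof is the single sentence ``all the interchange diagrams are covered jointly by Propositions \ref{prop:symmetric-nfold} and \ref{prop:continuous-braided}'', and your case analysis is precisely the unpacking of that claim (with the case $i<j\le m<k$ routed through Proposition \ref{prop:continuous-monoidal}, which the paper folds into its citation of Proposition \ref{prop:continuous-braided}). Your closing remark about the all-$\otimes$ instance of T1 being the only place where symmetry rather than a mere braiding is needed is accurate and a nice clarification of where the hypothesis is actually used.
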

\begin{proof}
  This requires no extra work: all the interchange diagrams are
  covered jointly by Propositions \ref{prop:symmetric-nfold} and
  \ref{prop:continuous-braided}.
\end{proof}

Lastly, we can go further with monoidal categories with biproducts
(that is, where finite coproducts and products exist and agree. We use
the notation $0$ for the object which is both initial and terminal,
and $X\oplus Y$ for the object which is both the product and coproduct
of $X$ and $Y$).
\begin{prop}
  \label{prop:bicontinuous-monoidal}
Let $(\cC,\otimes,\I)$ be a monoidal category with finite biproducts,
where the monoidal structure commutes with biproducts in each variable
separately. Then, for any $m,n\geq 0$, the category $\cC$ carries a
$(m+1+n)$-fold monoidal structure, where the first $m$ and last $n$
structures are cartesian, and the middle one is $(\otimes,\I)$.
\end{prop}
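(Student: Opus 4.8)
The plan is to assemble the $(m{+}1{+}n)$-fold structure from three overlapping pieces we already have, and then to check only the diagrams straddling the central $\otimes$. Since $\cC$ has biproducts, write $\oplus$ for the biproduct, which is simultaneously the product $\times$ and the coproduct $\sqcup$, with unit the zero object $0$; recall that $\oplus$ is canonically symmetric monoidal. The three pieces are: (i) Proposition~\ref{prop:symmetric-nfold} applied to $(\oplus,0)$, giving an $(m{+}n)$-fold monoidal structure using $\oplus$ in every slot, and in particular the maps $\chi_{ik},\zeta_{ik},\nu_{ik},\iota_{ik}$ for all pairs $i<k$ with $i,k\neq m{+}1$ together with all their double and triple interchange diagrams; (ii) Proposition~\ref{prop:continuous-monoidal}, with $\oplus$ regarded as the cocartesian structure, giving an $(m{+}1)$-fold structure whose first $m$ slots are $\oplus$ and whose last slot is $(\otimes,\I)$, hence the maps for each pair $(i,m{+}1)$ with $i\le m$ and the triple interchange diagrams for triples $(i,j,m{+}1)$ with $i<j\le m$; and (iii) Remark~\ref{rmk:cocontinuous}, the dual of Proposition~\ref{prop:continuous-monoidal}, with $\oplus$ regarded as the cartesian structure, giving an $(n{+}1)$-fold structure with $(\otimes,\I)$ first and $\oplus$ in the remaining $n$ slots, hence the maps for each pair $(m{+}1,k)$ with $k\ge m{+}2$ and the triple interchange diagrams for triples $(m{+}1,j,k)$ with $m{+}1<j<k$. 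The hypothesis that $\otimes$ commutes with biproducts in each variable is exactly what (ii) (commutation with coproducts) and (iii) (commutation with products) require.

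First I would check that these pieces agree on their overlaps, so that they glue to a single family of structure maps. The only overlaps are the maps indexed by pairs $(i,k)$ with $i,k\neq m{+}1$, which in (i), (ii) and (iii) are all taken from the symmetric structure of $\oplus$; since the symmetry of a biproduct is the same whether $\oplus$ is viewed as product, coproduct, or symmetric monoidal structure, agreement is immediate. Explicitly, $\chi_{i,m{+}1}$ for $i\le m$ is the coproduct-summand inclusion $(W\otimes X)\oplus(Y\otimes Z)\hookrightarrow(W\oplus Y)\otimes(X\oplus Z)$, $\chi_{m{+}1,k}$ for $k\ge m{+}2$ is the product-summand projection $(W\oplus X)\otimes(Y\oplus Z)\twoheadrightarrow(W\otimes Y)\oplus(X\otimes Z)$, $\iota_{i,m{+}1}$ is the initial map $0\to\I$, $\iota_{m{+}1,k}$ the terminal map $\I\to 0$, while $\zeta_{m{+}1,k}\colon\I\to\I\oplus\I$ and $\nu_{i,m{+}1}\colon\I\oplus\I\to\I$ are the diagonal and codiagonal of $\I$; all remaining $\zeta$, $\nu$, $\iota$ are identities or the forced maps out of or into $0$.

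With the maps in place, every double interchange diagram is covered: pairs $(i,k)$ with $i,k\neq m{+}1$ by (i), pairs $(i,m{+}1)$ with $i\le m$ by (ii), and pairs $(m{+}1,k)$ with $k\ge m{+}2$ by (iii). Likewise every triple interchange diagram is covered except for the triples $(i,m{+}1,k)$ with $i\le m$ and $k\ge m{+}2$, which straddle the middle structure and mix all three pieces; for such a triple $\otimes_i=\otimes_k=\oplus$ and $\otimes_{m{+}1}=\otimes$. Among these, T2, T3, T5, T6, T7 and T8 each compare two maps whose common source is the initial object $0$ or whose common target is the terminal object $0$ (the units of slots $i$ and $k$), so they commute automatically; T4 is built entirely from $\I$ and $\oplus$ and unwinds, via the matrix calculus of biproducts, to the routine identity that its two composites both send $(a,b)\mapsto(a+b,a+b)\colon\I\oplus\I\to\I\oplus\I$, so it commutes as well.

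The only diagram with genuine content is T1, and verifying it is the main, though still routine, obstacle. For the triple $(i,m{+}1,k)$, distributivity of $\otimes$ over $\oplus$ expands both the source and the target of T1 canonically as biproducts of eight $\otimes$-words in the eight variables; under these identifications $\chi_{i,m{+}1}$ is a summand inclusion, $\chi_{m{+}1,k}$ a summand projection, and $\chi_{ik}$ a permutation of summands, so each of the two composites in T1 is the canonical map between biproducts determined by a partial function on the indexing sets, namely the one that retains the four summands common to source and target (a projection followed by an inclusion). It then remains to verify that the two composites induce the same such partial function, which is a finite bookkeeping check; this establishes T1 and completes the proof. (When $m=0$ or $n=0$ there are no straddling triples and the argument reduces to Remark~\ref{rmk:cocontinuous} or Proposition~\ref{prop:continuous-monoidal} respectively.)
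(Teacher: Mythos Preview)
Your proposal is correct and follows essentially the same route as the paper: reduce to the triple interchange diagrams for $i\le m<m{+}1<k$, dispose of all but T1 and T4 using that $0$ is both initial and terminal, and verify T1 and T4 by resolving into biproduct components. The paper phrases T1 as a $64$-component check (identity on the four shared summands $S\otimes U$, $T\otimes V$, $W\otimes Y$, $X\otimes Z$ and zero elsewhere), which is exactly your ``partial function retaining four common summands''; your computation for T4 as $(a,b)\mapsto(a{+}b,a{+}b)$ agrees with the paper's ``identity on each of four components'' (meaning each entry of the $2\times 2$ matrix is $1_\I$, not that the matrix is the identity).
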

\begin{proof}
The only interchange diagrams not covered by the propositions above
are the triple interchange diagrams for $i<m+1<k$. Of these, all but
T1 and T4 compare maps out of or into the zero object (which is both
initial and terminal).
  \begin{displaymath}
    \begin{tikzcd} 
      \begin{tabular}{C}
        ((S\oplus T)\otimes (U\oplus V))\\
        \oplus \\
        ((W\oplus X)\otimes (Y\oplus Z))
      \end{tabular}
      \ar[r, "\chi\oplus\chi"]
      \ar[d, "\chi"']&
      \begin{tabular}{C}
        ((S\otimes U)\oplus (T\otimes V))\\
        \oplus \\
        ((W\otimes Y)\oplus (X\otimes Z))
      \end{tabular}
      \ar[d, "\chi"]\\
      \begin{tabular}{C}
        ((S\oplus T)\oplus (W\oplus X))\\
        \otimes \\
        ((U\oplus V)\oplus (Y\oplus Z))
      \end{tabular}
      \ar[d, "\chi\otimes\chi"']&
      \begin{tabular}{C}
        ((S\otimes U)\oplus (W\otimes Y))\\
        \oplus \\
        ((T\otimes V)\oplus (X\otimes Z))
      \end{tabular}
      \ar[d, "\chi\oplus\chi"]\\
      \begin{tabular}{C}
        ((S\oplus W)\oplus (T\oplus X))\\
        \otimes \\
        ((U\oplus Y)\oplus (V\oplus Z))
      \end{tabular}
      \ar[r, "\chi"']
      &
      \begin{tabular}{C}
        ((S\oplus W)\otimes (U\oplus Y))\\
        \oplus \\
        ((T\oplus X)\otimes (V\oplus Z)),
      \end{tabular}
    \end{tikzcd}
  \end{displaymath}
  \begin{displaymath}
    \begin{tikzcd} 
      (\I\oplus \I)\oplus (\I\oplus \I)
      \ar[rr, "\chi"]&&
      (\I\oplus \I)\oplus (\I\oplus \I)
      \ar[d, "\nu\oplus\nu"]\\
      \I\oplus \I
      \ar[u, "\zeta\oplus\zeta"]
      \ar[r, "\nu"']&
      \I
      \ar[r, "\zeta"']&
      \I\oplus \I.
    \end{tikzcd}
  \end{displaymath}
  The first diagram, T1, compares maps between two objects which are
  both biproducts of eight products of two objects. These maps can
  hence be resolved into sixty-four components. Both maps are the
  identity on the shared components $S\otimes U$, $T\otimes V$,
  $W\otimes Y$ and $X\otimes Z$, and zero between all other pairs of
  components. Hence they agree.

  The second diagram, T4, can be resolved into four components, and is
  the identity on each.
\end{proof}

This can be strengthened, as one might expect from the above.
\begin{prop}
  \label{prop:bicontinuous-braided-symmetric}
  In the situation of the proposition above, if $(\cC,\otimes,\I)$ is
  braided, then (for any $m,n\geq 0$) there is a $(m+2+n)$-fold
  monoidal structure of the same sort. Indeed, if $(\cC,\otimes,\I)$
  is symmetric, then (for any $m,k,n\geq 0$) there is a $(m+k+n)$-fold
  monoidal structure of the same sort.
\end{prop}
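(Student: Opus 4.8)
The plan is to assemble the whole $(m{+}2{+}n)$-fold (respectively $(m{+}k{+}n)$-fold) structure from the propositions already proved, and then to verify each double and triple interchange diagram by exhibiting it as a special case of an earlier result. First I would fix an indexing: structures $1,\dots,m$ and structures $m{+}k'{+}1,\dots,m{+}k'{+}n$ are the cartesian ones, while $J:=\{m{+}1,\dots,m{+}k'\}$ indexes the copies of $(\otimes,\I)$, with $k'=2$ in the braided case and $k'=k$ in the symmetric case. The key combinatorial observation is that $J$ is an interval of integers, so any triple $i<j<k$ of indices meets $J$ in at most $k'$ members, and — crucially — a triple meeting $J$ in exactly two members has its third member lying either entirely before $J$ or entirely after $J$. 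I would then take $\chi,\zeta,\nu,\iota$ to be exactly those prescribed before: between two cartesian structures, or between a cartesian and an $\otimes$ structure, as in Proposition~\ref{prop:bicontinuous-monoidal} (so that a cartesian structure acts qua coproduct when it precedes its partner and qua product when it follows it), and between two $\otimes$ structures as in Proposition~\ref{prop:braided-double}. By construction the restriction of this data to any subfamily of the $n$ monoidal structures then agrees on the nose with the output of whichever earlier proposition covers that subfamily.

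Next I would dispatch the diagrams case by case. A pair $i<j$, or a triple $i<j<k$, meeting $J$ in at most one index is covered by Proposition~\ref{prop:bicontinuous-monoidal} applied to the $(m{+}1{+}n)$-fold sub-structure obtained by deleting all but that one $\otimes$ copy; this supplies the relevant diagrams among D1--D12 and T1--T8. A pair $i<j$ with $i,j\in J$ is covered by Proposition~\ref{prop:braided-double} (the copies of $(\otimes,\I)$ form a braided double monoidal category). A triple meeting $J$ in exactly two indices whose third index precedes $J$ is covered by Proposition~\ref{prop:continuous-braided} in the braided case and Proposition~\ref{prop:continuous-symmetric} in the symmetric case: here I would note that a finite-biproduct category on which $\otimes$ commutes with biproducts is a fortiori a finite-coproduct category on which $\otimes$ commutes with coproducts, and that our interchange map between a first-block cartesian structure and an $\otimes$ copy is precisely the coproduct-style map used there. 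The mirror-image case (third index after $J$) I would handle by applying the same proposition to $\cC^{\op}$ — which again has finite biproducts with which $\otimes$ commutes and on which $(\otimes,\I)$ stays braided (respectively symmetric) — and then reversing the order of the monoidal structures as in the remark that opposition reverses this order. Finally, in the symmetric case a triple with $\{i,j,k\}\subseteq J$ lies inside the $n$-fold monoidal category of Proposition~\ref{prop:symmetric-nfold}.

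The one step that needs genuine care — rather than depth — is checking that the various earlier propositions, when restricted to overlapping subfamilies, produce \emph{the same} structure maps, so that a diagram verified in one of them really is the diagram being checked here. This reduces to two facts: (i) the interchange map between a cartesian and an $\otimes$ structure depends only on their relative order (coproduct-style when the cartesian one is first, product-style when it is second), which is how Propositions~\ref{prop:continuous-monoidal}, \ref{prop:continuous-braided}, \ref{prop:continuous-symmetric}, \ref{prop:bicontinuous-monoidal} and Remark~\ref{rmk:cocontinuous} all define it; and (ii) the interchange map between two $\otimes$ copies is the braiding-induced map of Proposition~\ref{prop:braided-double}, which is also the one used internally by Propositions~\ref{prop:continuous-braided} and \ref{prop:continuous-symmetric}. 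Once this compatibility is recorded, no new diagram has to be drawn: every double and triple interchange diagram of the combined structure is literally an instance of one already verified. The braided and symmetric statements are proved in parallel this way, the only difference being that the braided case invokes Proposition~\ref{prop:continuous-braided} (with $|J|=2$) where the symmetric case invokes Proposition~\ref{prop:continuous-symmetric} (with $|J|=k$).
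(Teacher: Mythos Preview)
Your proposal is correct and takes essentially the same approach as the paper: the paper's proof is a single sentence (``In both cases, all diagrams required are covered by the propositions above''), and your argument is a careful unpacking of exactly which earlier proposition covers which double or triple interchange diagram. The explicit case analysis on how a triple meets the block $J$, the compatibility check for the structure maps, and the passage to $\cC^{\op}$ for the ``two $\otimes$ copies with a cartesian structure after them'' case are all legitimate ways to make the one-line proof rigorous; the paper simply leaves these details to the reader.
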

\begin{proof}
  In both cases, all diagrams required are covered by the propositions
  above.
\end{proof}

\subsection{Posets and related constructions}
\label{ss:posets-et-cetera}

The category $\cP$ of posets and order-preserving maps can be equipped
with two monoidal structures that act as an elementwise disjoint
union
\[\Ob(A\sqcup B)=\Ob(A\odot B)=\Ob A\sqcup\Ob B,\]
but that act differently on the orderings:
\begin{itemize}
\item in the \emph{disjoint union} monoidal structure, $A\sqcup B$ is
  such that every element in $A$ is incomparable with every element in
  $B$ (but the internal order on $A$ and $B$ is preserved).
\begin{displaymath}
\begin{tikzpicture}
\node[draw, black!50, inner sep=2pt, rounded corners] at (-3,0) {
  \begin{tikzcd}[row sep=tiny, column sep=tiny, arrow style=tikz, every arrow/.append style={shorten=-4pt}]
    \bulletred\ar[r,cbred]&\bulletred
  \end{tikzcd}};
\node at (-2,0) {$\sqcup$};
\node[draw, black!50, inner sep=2pt, rounded corners] at (-1,0) {
  \begin{tikzcd}[row sep=tiny, column sep=tiny, arrow style=tikz, every arrow/.append style={shorten=-4pt}]
    \bulletblue\ar[r,cbblue]&\bulletblue
  \end{tikzcd}};
\node at (0,0) {$=$};
\node[draw, black!50, inner sep=2pt, rounded corners] at (1,0) {
  \begin{tikzcd}[row sep=tiny, column sep=tiny, arrow style=tikz, every arrow/.append style={shorten=-4pt}]
    \bulletred\ar[r,cbred]&\bulletred\\
    \bulletblue\ar[r,cbblue]&\bulletblue
  \end{tikzcd}};
\end{tikzpicture}
\end{displaymath}
\item in the \emph{join} monoidal structure, $A\odot B$ is such that
  every element in $A$ is less than every element in $B$ (and the
  internal order on $A$ and $B$ is also preserved).
\begin{displaymath}
\begin{tikzpicture}
\node[draw, black!50, inner sep=2pt, rounded corners] at (-3,0) {
  \begin{tikzcd}[row sep=tiny, column sep=tiny, arrow style=tikz, every arrow/.append style={shorten=-4pt}]
    & \bulletred\\
    \bulletred\ar[ur, cbred]\ar[dr, cbred] &\\
    & \bulletred
  \end{tikzcd}};
\node at (-2,0) {$\odot$};
\node[draw, black!50, inner sep=2pt, rounded corners] at (-1,0) {
  \begin{tikzcd}[row sep=tiny, column sep=tiny, arrow style=tikz, every arrow/.append style={shorten=-4pt}]
    \bulletblue\ar[dr, cbblue] &\\
    & \bulletblue\\
    \bulletblue\ar[ur, cbblue] &
  \end{tikzcd}};
\node at (0,0) {$=$};
\node[draw, black!50, inner sep=2pt, rounded corners] at (1.75,0) {
  \begin{tikzcd}[row sep=tiny, column sep=tiny, arrow style=tikz, every arrow/.append style={shorten=-4pt}]
    & \bulletred\ar[r]\ar[ddr] & \bulletblue\ar[dr, cbblue] &\\
    \bulletred\ar[ur, cbred]\ar[dr, cbred] &&& \bulletblue\\
    & \bulletred\ar[r]\ar[uur] & \bulletblue\ar[ur, cbblue] &
  \end{tikzcd}};
\end{tikzpicture}
\end{displaymath}
\end{itemize}
This construction is again relevant to concurrency. In partial order
semantics of concurrency~\cite{Vogler}, $\odot$ models a ``serial'' or
``sequential'' composition of programs or systems and $\sqcup$ a
``parallel'' composition. Indeed, when we run two tasks in series, we
demand that every event in the former happens before every event in
the latter. When we run two tasks in parallel, however, we only assume
that events occur in their proper order within each task.

Both $\sqcup$ and $\odot$ have the empty poset as their unit, and the
existence of the interchange morphism, which in this case is an
inclusion of posets bijective on objects (aka \emph{subsumption}), is
clear:
\begin{equation*}
  (A\odot B)\sqcup(C\odot D)\rightarrow(A\sqcup C)\odot (B\sqcup D).
\end{equation*}
This map is usually not an isomorphism: in the right-hand side,
elements of $B$ precede those of $C$, but in the left-hand side
they are incomparable. Similarly, in the right-hand side, elements of
$A$ precede elements of $D$, whereas in the left-hand side they are
incomparable. In this sense, the left-hand side poset is ``more
parallel'' and the right-hand side poset ``more serial''.

\begin{displaymath}
\begin{tikzpicture}
\node[draw, black!50, inner sep=2pt, rounded corners] (ldiag) at (-2.5,0) {
  \begin{tikzcd}[row sep=tiny, column sep=tiny, arrow style=tikz, every arrow/.append style={shorten=-4pt}]
    \bulletorange\ar[r,cborange,"A"]&\bulletorange\ar[r]&\bulletteal\ar[r,cbteal,"B"]&\bulletteal\\
    \bulletred\ar[r,cbred,"C"']&\bulletred\ar[r]&\bulletblue\ar[r,cbblue,"D"']&\bulletblue\\
  \end{tikzcd}};
\node[draw, black!50, inner sep=2pt, rounded corners] (rdiag) at (2.5,0) {
  \begin{tikzcd}[row sep=tiny, column sep=tiny, arrow style=tikz, every arrow/.append style={shorten=-4pt}]
    \bulletorange\ar[r,cborange,"A"]&\bulletorange\ar[r]\ar[dr]&\bulletteal\ar[r,cbteal,"B"]&\bulletteal\\
    \bulletred\ar[r,cbred,"C"']&\bulletred\ar[r]\ar[ur]&\bulletblue\ar[r,cbblue,"D"']&\bulletblue\\
  \end{tikzcd}};
  \draw[thick, ->, shorten >=4pt, shorten <=4pt] (ldiag)--(rdiag);
\end{tikzpicture}
\end{displaymath}

It is straightforward to check that both these constructions are
monoidal. In fact, for any $m$ we can make an $(m+1)$-fold monoidal
category with $\otimes_i=\sqcup$ for $1\leq i\leq m$ and
$\otimes_{m+1}=\odot$.

We can use products of posets as well, extending this example with an
extra layer of monoidal structure.
\begin{prop}
  \label{prop:posetcat}
  For any natural numbers $m$, $n$, there is an $(m+1+n)$-fold
  monoidal category with $\otimes_i=\sqcup$ for $1\leq i\leq m$, and
  $\otimes_{m+1}=\odot$, and $\otimes_i=\times$ for $m+1+1\leq i\leq
  m+1+n$.
\end{prop}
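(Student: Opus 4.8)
The plan is to push the whole problem down to $\Set$ along the forgetful functor $U\colon\cP\to\Set$, which is faithful. The point that makes this work is that all three monoidal structures in play have transparent underlying sets: $U(A\sqcup B)=UA\sqcup UB$, $U(A\times B)=UA\times UB$, and, crucially, $U(A\odot B)=UA\sqcup UB$ — the ``join'' (ordinal sum) $\odot$ has the \emph{same} underlying set as the disjoint union, only the order differs. Moreover the associators and unitors of all three structures are identities on underlying sets, so $\odot$ is a (strict, non-symmetric) monoidal structure on $\cP$ with unit $\emptyset$, and $U$ carries it strictly onto the cocartesian monoidal structure of $\Set$.

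First I would write down candidate structure maps for all $1\le i<j\le m+1+n$, where positions $1,\dots,m$ carry $\sqcup$, position $m+1$ carries $\odot$, positions $m+2,\dots,m+1+n$ carry $\times$, and units are $\emptyset$ for the first $m+1$ slots and $1$ for the last $n$. For pairs of $\sqcup$'s and pairs of $\times$'s I reuse the maps supplied by Proposition~\ref{prop:symmetric-nfold} (both structures are symmetric); for a $\sqcup$ at position $i\le m$ paired with the $\odot$ at $m+1$, I take $\chi_{i,m+1}$ to be the subsumption inclusion $(W\odot X)\sqcup(Y\odot Z)\to(W\sqcup Y)\odot(X\sqcup Z)$, with the other three maps identities on $\emptyset$; for a $\sqcup$ paired with a $\times$, the maps of subsection~\ref{ss:coprods-prods} (the summand inclusion, the fold map $1\sqcup1\to1$, and $\emptyset\to1$); and for the $\odot$ at $m+1$ paired with a $\times$ at $j\ge m+2$, the evident analogue: $\chi_{m+1,j}\colon(W\times X)\odot(Y\times Z)\to(W\odot Y)\times(X\odot Z)$ sending each summand into the matching sub-block by a projection followed by an inclusion, $\zeta_{m+1,j}\colon\emptyset\to\emptyset\times\emptyset$, $\nu_{m+1,j}\colon 1\odot 1\to1$, $\iota_{m+1,j}\colon\emptyset\to1$. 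The only point needing an argument is that $\chi_{m+1,j}$ is order-preserving: on each summand this is clear, and an element of the $W\times X$ summand precedes an element of the $Y\times Z$ summand in the source (automatically), while in the target the first coordinates lie in $W$ and $Y$ respectively, hence are comparable in $W\odot Y$, and likewise for the second coordinates, so the inequality holds there too.

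Next I would apply $U$. By the first paragraph, each of these maps becomes, strictly, the corresponding structure map of the $(m+1+n)$-fold monoidal structure on $\Set$ produced by applying Proposition~\ref{prop:continuous-symmetric} to the symmetric monoidal category $(\Set,\times,1)$ — which does commute with coproducts — with $m+1$ copies of the cocartesian structure followed by $n$ copies of $\times$; the only identifications worth a glance are that $U(\chi_{i,m+1})$ is the cocartesian--cocartesian interchange (a reshuffle of a fourfold coproduct) and $U(\chi_{m+1,j})$ is the cocartesian--cartesian one of subsection~\ref{ss:coprods-prods}. That $\Set$-structure satisfies all the double and triple interchange diagrams; since $U$ is faithful and the $\cP$-maps are all genuine morphisms (the one nontrivial check being above), the corresponding parallel pairs of morphisms in $\cP$ are equal, so every required diagram commutes. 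Hence $\cP$ carries the claimed $(m+1+n)$-fold monoidal structure.

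The main obstacle is really the conceptual one of noticing that $\odot$ and $\sqcup$ share an underlying set, which is exactly what lets the faithful functor $U$ collapse the problem onto the case already settled over $\Set$. One could instead check the genuinely new diagrams by hand — the double interchange diagrams D1 and D11 for the $(\odot,\times)$ pair, and the triple diagram T1 for the triples $(\sqcup,\odot,\times)$ and $(\odot,\times,\times)$ (all the other triple diagrams compare maps out of an initial, or into a terminal, object) — but these reduce to the same elementary bookkeeping on underlying sets, so the transfer argument is the economical route.
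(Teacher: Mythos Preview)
Your argument is correct and takes a genuinely different route from the paper's. The paper simply writes down the same structure maps you do (including the new $\chi$ for the $(\odot,\times)$ pair, and the maps $\zeta:\emptyset\to\emptyset\times\emptyset$, $\nu:1\odot1\to1$, $\iota:\emptyset\to1$), and then declares that ``checking the double interchange diagrams for this structure and the new triple interchange diagrams is tedious but straightforward (much like the checks in Subsection~\ref{ss:coprods-prods})''. You instead observe that the forgetful functor $U\colon\cP\to\Set$ is faithful and carries $\odot$ strictly onto $\sqcup$, so that every interchange diagram in $\cP$ has as its $U$-image precisely the corresponding diagram for the $(m+1+n)$-fold monoidal structure on $\Set$ supplied by Proposition~\ref{prop:continuous-symmetric}; commutativity then transfers back by faithfulness. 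This buys you a clean reduction in place of a case-by-case verification, and isolates the genuinely new content as the single order-preservation check for $\chi_{m+1,j}$, which you carry out. The paper's approach, by contrast, is self-contained and does not rely on the $\Set$-level result having been established first. One small wording point: the associators and unitors are not literally identities on underlying sets but rather the canonical $\Set$-isomorphisms; what you need (and what holds) is that $U$ is strict monoidal for each of the three structures, which is enough for the transfer.
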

\begin{proof}
  The structure on the first $m+1$ products, relating $\sqcup$ and
  $\odot$, is that just constructed.

  The double monoidal structure relating $\sqcup$ and $\times$ is that
  of Subsection \ref{ss:coprods-prods}. The double monoidal structure
  relating $\odot$ and $\times$ is similar: there is an inclusion
  \[\chi:(A\times B)\odot(C\times D)\longrightarrow(A\odot C)\times(B\odot D),\]
  and we use the isomorphism $\zeta: 0\rightarrow 0\times 0$, and the
  terminal maps $\nu: 1\odot 1\rightarrow 1$ and $\iota:0\rightarrow
  1$.

  Checking the double interchange diagrams for this structure and the
  new triple interchange diagrams is tedious but straightforward
  (much like the checks in Subsection \ref{ss:coprods-prods}).
\end{proof}

These constructions above are quite specific, and generalise or adapt
in a number of ways without essential change:
\begin{itemize}
\item we can require that posets be finite;
\item we can drop antisymmetry  and hence consider preorders;
\item we can also drop transitivity and work with directed or directed
  acyclic graphs;
\item we can drop transitivity and add symmetry, thus working with
  graphs;
\item we can work with categories instead (where, as usual, a
  morphism generalises an arrow in a Hasse diagram for a preorder),
  using a categorical join \cite{FriGol} for $\odot$;
\item we can work with simplicial sets and a simplicial join
  \cite{FriGol} operation for $\odot$;
\item we can attach extra structure, such as labellings for vertices
  (as used in the \emph{pomsets} or \emph{partial words} of
  \cites{Wink, Grabowski} and the labelled transition systems of
  concurrency theory) or for edges (as used in automata).
\end{itemize}

In all these settings, the induced maps of Subsection
\ref{ss:induced-maps} are interesting in relating the sequential and
parallel monoidal structures: the units are uniquely isomorphic, and
the map $\varphi(1):A\sqcup B\rightarrow A\odot B$ is the natural
inclusion of posets.

In the theory of pomsets, cited above, it is normal to pass to
isomorphism classes of vertex-labelled posets. In general, just as a
monoidal category has a monoid of isomorphism classes, an $n$-fold
monoidal category has an $n$-fold monoid of isomorphism classes. Thus
we have the following fact.
\begin{prop}
\label{prop:pomsets}
Pomsets form an $(m+1+n)$-fold monoid, for any $m$ and $n$, where the
first $m$ monoidal structures are induced by $\sqcup$, the next by
$\odot$, and the last $n$ by $\times$.
\end{prop}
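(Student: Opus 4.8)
The plan is to deduce this by transporting Proposition~\ref{prop:posetcat} along the ``isomorphism classes of objects'' construction, so almost all of the geometric work has already been done. First I would record that, exactly as noted at the end of Subsection~\ref{ss:posets-et-cetera}, the construction of Proposition~\ref{prop:posetcat} adapts without essential change to the category $\cC$ of \emph{vertex-labelled} posets with label-preserving order-preserving maps: the operations $\sqcup$, $\odot$ and $\times$ all make sense on labelled posets, and the structure maps $\chi$, $\zeta$, $\nu$, $\iota$ are built, as before, from the universal properties of finite (co)products and from the inclusion $(A\odot B)\sqcup(C\odot D)\to(A\sqcup C)\odot(B\sqcup D)$, all of which respect labels. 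Hence $\cC$ is an $(m+1+n)$-fold monoidal category with $\otimes_i=\sqcup$ for $1\le i\le m$, $\otimes_{m+1}=\odot$, and $\otimes_i=\times$ for $m+1<i\le m+1+n$.

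Next I would make precise the principle flagged just before the statement. For any $n$-fold monoidal category $\cD$, the set $\pi_0(\cD)$ of isomorphism classes of objects, preordered by $[X]\preceq[Y]$ iff $\Hom_\cD(X,Y)\ne\emptyset$, is a lax $n$-fold monoid object (in the sense of Subsection~\ref{ss:lax-monoid-objs}) in the monoidal bicategory $\mathbf{Preord}$ of preorders, monotone maps and pointwise inequalities, a monoidal sub-bicategory of $\Cat$. Indeed each functor $\otimes_i$ descends to a monotone operation on $\pi_0(\cD)$ that is associative and unital on the nose, since $\alpha_i,\lambda_i,\rho_i$ are isomorphisms, so each $(\pi_0(\cD),\otimes_i,\I_i)$ is a monoid object in $\mathbf{Preord}$; and each interchange morphism $\chi_{ij}$ (likewise $\zeta_{ij},\nu_{ij},\iota_{ij}$) of $\cD$ yields, for every choice of objects, a morphism between the two composites, hence a pointwise inequality, hence the required $2$-cell between the associated composite monotone maps on $\pi_0(\cD)$. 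Every double and triple interchange diagram (D1--D12, T1--T8) then holds automatically, being an equation of $2$-cells in a preorder, where parallel $2$-cells always coincide. Applying this to the $\cC$ of the first paragraph --- whose isomorphism classes of objects are by definition exactly pomsets, and whose induced operations $[A]\otimes_i[B]=[A\otimes_i B]$ are precisely the parallel ($\sqcup$), serial ($\odot$) and product ($\times$) compositions of pomsets --- produces the asserted $(m+1+n)$-fold monoid structure.

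The hard part is really only the formulation in the second paragraph: one has to fix the ambient structure in which ``the $n$-fold monoid of isomorphism classes'' lives and check that $\cD\mapsto\pi_0(\cD)$ transports the axioms there. The subtlety is that the resulting $n$-fold monoid is genuinely \emph{lax}, not strict: the interchange morphism $(A\odot B)\sqcup(C\odot D)\to(A\sqcup C)\odot(B\sqcup D)$ is typically not invertible, and $\iota$ corresponds only to the existence of a map $0\to 1$ (not to a subsumption), so the right home is $\mathbf{Preord}$ with its oriented $2$-cells rather than $\Set$ or a poset with its partial order. Once this set-up is in place the verification is pure bookkeeping, every coherence check being vacuous in a preorder, and the proposition follows from Proposition~\ref{prop:posetcat} with no further computation.
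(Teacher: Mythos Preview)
Your approach is essentially the paper's: adapt Proposition~\ref{prop:posetcat} to vertex-labelled posets and then pass to isomorphism classes, exactly as the paragraph immediately preceding the statement indicates. You have been considerably more careful than the paper about what the resulting structure is and where it lives---identifying $\mathbf{Preord}$ as the ambient monoidal bicategory and flagging that the interchange (and $\iota:0\to 1$) are genuinely lax rather than equalities---which the paper leaves entirely implicit in the phrase ``$n$-fold monoid of isomorphism classes.''
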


\subsection{Functorial constructions on posets}

Many natural constructions on posets are functorial, and several
phenomena of interest in concurrency can be expressed as functors from
a category of posets. In fact, the category of posets is monoidal
closed: for any posets $P$ and $Q$, the homset $\Poset(P,Q)$ naturally
has the structure of a poset, with the pointwise order (that is, where
$f\leq g$ if $f(x)\leq g(x)$ for all $x\in P$) as internal hom, and
$\Poset(P,\Poset(Q,R))\isom\Poset(P\times Q,R)$.

For any phenomenon that can be expressed using this closed structure,
one can ask how it interacts with the various monoidal structures,
described above, on the category of posets. We give some interesting
and useful examples.

\begin{example}
Consider the set of intervals $x\leq y$ in a poset. Writing $2$ for
the poset $0<1$, the ``set of intervals'' functor is, in fact, the
corepresentable functor $\Poset(2,-):\Poset\rightarrow\Poset$.

Since this functor is corepresentable, it is strong monoidal on the
cartesian monoidal structure: we have isomorphisms $1\isom\Poset(2,1)
$ and
\[\Poset(2,P)\times\Poset(2,Q)\isom\Poset(2,P\times Q).\]

Restricting to ``strict intervals'' in $P$ (that is, pairs satisfying
$x<y$ rather than $x\leq y$) has weaker categorical properties: this
is only functorial on the category $\Poset^\mono$ of posets and
strictly monotone maps (since if we have $f:P\rightarrow Q$ and $x<y$,
then we must also have $f(x)<f(y)$ for this strict interval in $P$ to
be sent to a strict interval in $Q$). The category of posets and
strictly monotone maps does not have categorical products (nor even a
terminal object), but it still has a monoidal structure given by
cartesian products. The strict interval functor is neither lax nor
oplax for the cartesian monoidal structure: we have natural maps
$\Poset^\mono(2,1)=\emptyset\rightarrow 1$ (compatible with an oplax
monoidal structure), but
\[\Poset^\mono(2,P)\times\Poset^\mono(2,Q)\longrightarrow\Poset^\mono(2,P\times Q)\]
(compatible with a lax monoidal structure).

Things work better for other monoidal structures: both the interval
and strict interval functors give strong monoidal maps with respect to
the disjoint union structure. Also, both are lax monoidal with respect
to the join monoidal structure $\odot$ described at the beginning of
this section: we have natural maps
\[\Poset(2,P)\odot\Poset(2,Q)\rightarrow \Poset(2,P\odot Q)\]
and $\Poset(2,\emptyset)\rightarrow\emptyset$, and the same also in
$\Poset^\mono$.

In fact, the interval functor can be extended to:
\begin{itemize}
\item a monoidal functor of type ``$\text{lax}^{m+1},\text{strong}^n$''
  from the $(m+1+n)$-fold monoidal category described in Proposition
  \ref{prop:posetcat}, or
\item a monoidal functor of type ``$\text{lax}^{m+1},\text{strong}^n$'',
  from the $(m+1+n)$-fold monoidal category which uses $\sqcup$ for
  the first $m+1$ monoidal products, and $\times$ for the last $n$, to
  the category in Proposition \ref{prop:posetcat}.
\end{itemize}

Similar comments apply to other diagram categories, and their strict
versions (that is, $\Poset(D,-)$ and $\Poset^\mono(D,-)$ for other
choices of $D$).
\end{example}

\begin{example}
A \emph{downset} of a poset $P$ is a subset $S\subset P$ such that if
$y\in S$ and $x\leq y$ then $x\in S$. Downsets can be studied
systematically as the functor category $\bD(P)=\Poset(-,2)$ (with the
downset being the preimage of $0$ in $P$). The functor $\bD$ is then a
functor $\Poset^\op\rightarrow\Poset$: functoriality is contravariant
in the sense that of $S\subset Q$ is a downset and $f:P\rightarrow Q$
a homomorphism, then $f^{-1}(S)$ is a downset of $P$. One can also
study \emph{upsets}, but this is equivalent: the complement of a
downset is an upset and vice versa.

As a representable functor, $\bD$ takes coproducts to products and
is therefore a strong monoidal functor from $\sqcup$ to
$\times$. There are also a number of lax monoidal structures on
$\bD$: from $\odot$ to $\odot$, from $\sqcup$ to $\odot$, and from
$\sqcup$ to $\sqcup$.

Again, this functor can be assembled to a functor of $n$-fold monoidal
categories in a number of ways compatible with the structures
above. For example, here is one coherence diagram expressing that $\bD$ is
a ``lax, strong'' monoidal functor from $(\sqcup,\sqcup)$ to
$(\odot,\times)$:
  \begin{center}
    \begin{tikzcd}
      (\bD W\times \bD X)\odot(\bD Y\times \bD Z)\ar[r, "\chi"]\ar[d, "\wr"']&
      (\bD W\odot \bD Y)\times(\bD X\odot \bD Z)\ar[d, "\mu\otimes\mu"]\\
      \bD (W\sqcup X)\odot \bD(Y\sqcup Z)\ar[d, "\mu"']&
      \bD (W\sqcup Y)\times \bD(X\sqcup Z)\ar[d, "\wr"]\\
      \bD ((W\sqcup X)\sqcup(Y\sqcup Z))\ar[r, "\bD(\chi)"']&
      \bD ((W\sqcup Y)\sqcup(X\sqcup Z)).
    \end{tikzcd}
  \end{center}
This can be shown to commute by checking components. An element
$(A,B)$ of $\bD W\times \bD X$ is sent to the corresponding subset of
$W\sqcup X\sqcup Y\sqcup Z$ containing none of $Y$ and $Z$; an element
$(A,B)$ of $\bD Y\times \bD Z$ is sent to the corresponding subset of
$W\sqcup X\sqcup Y\sqcup Z$ containing all of $W$ and $X$.
\end{example}

\begin{example}
The set of linearisations $\Lin(P)$ of a poset $P$ (that is, the set
of poset maps to a linear ordering which are bijective on objects) can
be defined to be a functor from the category $\Poset^{\inj,\op}$ (the
opposite of the category of posets and injective homomorphisms) to
$\Set$, since any linearisation of a poset can be specialised to a
linearisation of any subset of it. Similar constructions are
considered in \cite{Grabowski}. We have
\[\Lin(P\odot Q) \isom \Lin(P)\times\Lin(Q),\qquad
  \Lin(P\sqcup Q) \rightarrow \Lin(P)\times\Lin(Q)\] and
$\Lin(\emptyset)\isom 1$, so the linearisation functor is oplax on the
disjoint union monoidal structure, and strong on the join monoidal
structure. It can be extended to an $n$-fold monoidal functor in the
obvious way.
\end{example}

\subsection{Strict higher categories}
\label{ss:catoids}

Since the beginning of category theory \cites{MacL2, MacL}, it has
been useful to regard a category as a structure on a single set -- its
set of morphisms. One value of this approach is that it can be readily
formalised in non-dependent type theory as used by some proof
assistants~\cite{CMPS}; another one that functors and natural
transformations arise simply as functions.

There are various ways of encoding the relevant structure, and it has
been known for some time that relational monoids offer one convenient
approach~\cites{KenPar}. We begin this section by showing how lax
$n$-fold relational monoids, as defined in Subsections
\ref{ss:lax-monoid-objs} (and Subsection \ref{ss:relational} for
$n=2$) can encode the structure of a strict $n$-category.

Indeed, let $\cC$ be a strict $2$-category. We write $\cC_2$ for the
set of $2$-cells, and this set supports two associative partial
operations. Indeed, for two $2$-cells $\alpha_1:f_1\Rightarrow g_1$
(where $f_1,g_1:x_1\rightarrow y_1$) and $\alpha_2:f_2\Rightarrow g_2$
(where $f_2,g_2:x_2\rightarrow y_2$), we have
\begin{itemize}
\item the horizontal composition $\alpha_1\circ_0\alpha_2$,
  defined if and only if $x_1=y_2$;
\item the vertical composition $\alpha_1\circ_1\alpha_2$,
  defined if and only if $f_1=g_2$ (which implies that $x_1=x_2$ and
  $y_1=y_2$).
\end{itemize}

We consider $\Rel$ as a monoidal bicategory, where an object is a set,
a $1$-cell a relation, a $2$-cell an inclusion of relations, and the
monoidal structure the tensor monoidal structure described in Section
\ref{s:examples-monoidal}.

We claim that the set $\cC_2$ has the structure of a lax double monoid
in $\Rel$ -- as defined in Subsection \ref{ss:relational} --
consisting of two relational monoid structures and relational
inclusions corresponding to the four interchange laws. The double and
triple interchange diagrams are vacuous in this context, since any
diagram of inclusions of relations commutes automatically.

This structure is defined as follows:
\begin{itemize}
\item The first relational monoid structure on the set $\cC_2$,
  written $(\circ_0,\1_0)$, has units consisting of the set of unit
  $2$-cells on all $0$-cells, and multiplication given by horizontal
  composition (where defined);
\item The second relational monoid structure on $\cC_2$, written
  $(\circ_1,\1_1)$, has units consisting of the set of unit $2$-cells
  on all $1$-cells, and multiplication given by vertical composition
  (where defined).
\end{itemize}
This structure satisfies weak interchange. For example, for $\chi$:
given $a,b,c,d$, $(a\circ_1 b)\circ_0(c\circ_1 d)$ is defined if and
only if $s_1a=t_1b$ and $s_1c=t_1d$, and
$s_0a=s_0b=t_0c=t_0d$. Meanwhile, $(a\circ_0 c)\circ_1(b\circ_0 d)$ is
defined if $s_0a=t_0c$, $s_0b=t_0d$, and $(s_1a)(s_1c)=(t_1b)(t_1d)$,
which is a weaker condition.

\begin{remark}
This structure does not make full use of the category $\Rel$, since
the multiplication operations are in fact partial maps of sets. The
units, however, are not partial maps $1\rightarrow\cC_2$ as they are
multivalued, so the whole structure sits in $\Rel$ rather than
$\Set_*$.
\end{remark}

Having used a strict double category to obtain the structure of a lax
double monoid in $\Rel$, we now start going in the other direction: we
reconstruct the structure of a strict double category from a lax
double monoid (in particular, deducing the source and target maps from
the unit morphisms), adapting work of \cites{CMPS, CraDohStr2} to this
categorical context.

Given a lax double monoid $M$ in $\Rel$ where the multiplications are
partial maps, we say that $a\in M$ is a \emph{$0$-cell} if $a\circ_0x
= x$ whenever it is defined and also $x\circ_0 a = x$ whenever it is
defined. We say that it is a \emph{$1$-cell} if $a\circ_1x = x$
whenever defined and $x\circ_1a = x$ whenever defined.

If $a$ and $b$ are $0$-cells, then if $a\circ_0 b$ is defined if it is
equal both to $a$ and $b$, so that $a=b$. For any $x\in M$, there can
be at most one $0$-cell $a$ with $a\circ_0 x$ defined: if $a$ and $b$
both have this property, then
$x = a\circ_0 x = a\circ_0 (b\circ_0 x) = (a\circ_0 b)\circ_0 x$, and
the right-hand side is defined only if $a=b$. If such an $a$ exists,
we say that it is the \emph{$0$-target} of $x$, denoted $t_0(x)$;
similarly if an $0$-cell $a$ exists for which $x\circ_0 a$ is defined,
it is unique and we call it the \emph{$0$-source}, $s_0(x)$.

In fact, in a lax double monoid $M$, $s_0(x)$ and $t_0(x)$ exist for
each element $x\in M$. Indeed, the unit laws for a relational monoid
say for each $x\in M$ that there is some unit $a$ for which
$a\circ_0 x=x$, and that there is some unit $b$ for which
$x\circ_0 b=b$.

Likewise, we can define the \emph{$1$-source} and \emph{$1$-target}
maps $s_1$ and $t_1$. Again, $s_1(x)$ and $t_1(x)$ are well-defined
where they exist, and, again, in the presence of the unit laws of a
lax double monoid they do exist. The functions $s_0$, $t_0$, $s_1$ and
$t_1$ are idempotent; their image is precisely the set of $0$-cells
(for the former two) and $1$-cells (for the latter two).

As strict $n$-categories are formed by pairs of strict $2$-categories
in all different dimensions, we can prove the following fact in a
similar fashion, which says that every strict $n$-category gives a lax
$n$-fold monoid.
\begin{prop}
  The set of $n$-cells of a strict $n$-category can be endowed with
  the structure of a lax $n$-fold monoid in $\Rel$, where $\1_i$
  consists of the units on all $i$-cells, and $\otimes_i$ is just
  $\circ_i$ where defined.
\end{prop}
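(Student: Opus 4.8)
The plan is to transcribe the structure in exactly the manner of the $n = 2$ discussion just given, carrying it out for all $n$ dimensions at once. The key simplification is that, viewing $\Rel$ as a monoidal bicategory, its only $2$-cells are inclusions of relations, so any two parallel $2$-cells are equal; hence every diagram of $2$-cells commutes automatically. Consequently the double interchange diagrams D1--D12 and the triple interchange diagrams T1--T8 impose no conditions, and the whole notion of a lax $n$-fold monoid in $\Rel$ reduces to: (a) a relational monoid structure for each $\circ_i$; and (b), for each pair of dimensions $i < j$, the existence of the four interchange $2$-cells $\chi$, $\zeta$, $\nu$, $\iota$, that is, four inclusions of relations.

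For (a), I would note that for each $0 \le i \le n-1$ the set $X$ of $n$-cells, with $i$-cells as ``objects'', $n$-cells as ``arrows'', and $\circ_i$ as partial composition, is the relational monoid of a category: the relational associativity and unitality conditions unwind to associativity of $\circ_i$, its left and right unit laws, and totality of the $i$-source and $i$-target maps, all of which hold in a strict $n$-category. The subset $\1_i$ of identity $n$-cells on $i$-cells plays the role of the unit relation. This is the same observation as in the $n = 2$ case and in~\cite{KenPar}.

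For (b), fix $i < j$, taking $\circ_i$ as the ``first'' and $\circ_j$ as the ``second'' monoidal structure, and check the four inclusions in turn, mirroring the $n = 2$ argument. The inclusion $\iota$ amounts to $\1_i \subseteq \1_j$, which holds because the identity $n$-cell on an $i$-cell $a$ is also the identity $n$-cell on the identity $j$-cell on $a$. For $\zeta$, each $z \in \1_i$ satisfies $z = z \circ_j z$ with both factors in $\1_i$, since $z$ lies in $\1_j$ and is therefore a $\circ_j$-identity. For $\nu$, if $z = u \circ_i v$ with $u, v \in \1_j$ the identity $n$-cells on $j$-cells $c$ and $c'$, then $z$ is the identity $n$-cell on the $j$-cell $c \circ_i c'$, hence lies in $\1_j$. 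The only inclusion with real content is $\chi$: given $t = (a \circ_j b) \circ_i (c \circ_j d)$ with all composites defined, the identities relating sources, targets and composites in a strict $n$-category force $a \circ_i c$, $b \circ_i d$ and $(a \circ_i c) \circ_j (b \circ_i d)$ to be defined as well, and the strict interchange law then gives $(a \circ_i c) \circ_j (b \circ_i d) = t$; this is the required inclusion, with the same orientation as in Subsection~\ref{ss:relational}.

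Since any three of the resulting $n$ monoid structures then satisfy the triple interchange diagrams vacuously, the data above constitute a lax $n$-fold monoid in $\Rel$, with $\1_i$ and $\circ_i$ as asserted. The main --- and essentially only --- place where care is needed is the definedness bookkeeping in the $\chi$ step, confirming that the ``more serial'' composite on the right is defined whenever the ``more parallel'' one on the left is; but this is routine given globularity and the way source and target maps interact with the compositions.
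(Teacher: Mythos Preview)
Your proposal is correct and follows essentially the same approach as the paper: reduce to checking, for each pair $i<j$, the four interchange inclusions, relying on the fact that all diagrams of $2$-cells in $\Rel$ commute automatically. The paper only sketches the $\chi$ inclusion explicitly in the $n=2$ discussion and then invokes that strict $n$-categories are built from strict $2$-categories in every pair of dimensions; your treatment of $\iota$, $\zeta$, and $\nu$ is more explicit than anything the paper writes down, but is the intended elaboration.
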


Again, we can extract notions of $i$-cell, $i$-source and $i$-target
from the resulting structure. A variant of the structure on $\Rel$
just described has been formalised already in the literature under the
name \emph{$n$-catoid} \cites{CMPS}.

\begin{defn}[\cite{CraDohStr2, FahJohStrZie}]
A \emph{catoid} $(C,\odot,s,t)$ consists of a set $C$, a
multioperation $\odot:C\times C\to \bP C$ and source and target maps
$s,t:C\to C$, which satisfy
\begin{gather*}
  \bigcup_{v\in y \odot z} x\odot v = \bigcup_{u\in x \odot y} u \odot
  z,\\ x\odot y \neq \emptyset\Rightarrow s(x)=t(y),\qquad
  x\odot s(x) = \{x\},\qquad t(x)\odot x = \{x\}.
\end{gather*}
\end{defn}

If we extend $\odot$ to powersets as $X\odot Y = \bigcup_{x\in X,y\in
  Y} x\odot y$, we can write the first axiom as $\{x\}\odot (y\odot z) =
(x\odot y)\odot \{z\}$.

Catoids and relational monoids are equivalent; the former presents the
theory in terms of the multioperation and source and target maps; the
latter in terms of the multiplication of ternary relations and units.

We now define the structure that corresponds in a similar fashion to
lax $n$-fold relational monoids.
\begin{defn}
An \emph{$n$-catoid} consists of $n$ catoid structures
$(C,\odot_i,s_i,t_i)_{0\le i <n}$ on the same set $C$, interacting as
follows:
\begin{itemize}
\item for all $i\neq j$,
  \begin{gather*}
    s_i\circ s_j=s_j\circ s_i,\qquad s_i\circ t_j=t_j\circ s_i,\qquad
    t_i\circ s_j=s_j\cdot t_i,\qquad t_i\circ t_j=t_j\circ t_i,\\
    s_i(x\odot_j y)\subseteq s_i(x)\odot_j s_i(y),\qquad t_i(x\odot_j y)\subseteq t_i(x)\odot_j t_i(y),
  \end{gather*}
\item and for all $i<j$,
  \begin{gather*}
    (w \odot_j x)\odot_i (y\odot_j z) \subseteq (w \odot_i y)\odot_j
    (x\odot_i z),\\
    s_j\circ s_i = s_i,\qquad s_j\circ t_i=t_i,\qquad t_j\circ s_i =
    s_i,\qquad t_j\circ t_i = t_i,
  \end{gather*}
\item the \emph{closure condition} that $s_j(s_j(x)\odot_i s_j(y))= s_j(x)\odot_i s_j(y)$ for all $i<j$.
\end{itemize}
\end{defn}

In the presence of the other axioms, the closure condition, which was
absent in a previous definition of $n$-catoid~\cite{CMPS}, is
equivalent to the analogous statement using targets $t_j$ instead of
sources $s_j$.

Since we are working in $\Rel$, the natural notion of morphism of lax
$n$-fold relational monoids is itself a relation that commutes with
the source and target structure.

The following can be shown (and has been checked, on objects, using
the Isabelle/HOL proof assistant).
\begin{thm}
The categories of $n$-catoids and lax $n$-fold
relational monoids are equivalent.
\end{thm}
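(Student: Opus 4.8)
The plan is to show that both sides are assembled from the same single-structure data together with purely \emph{pairwise} interaction conditions, so that the theorem reduces to the case $n=2$, and then to match the $2$-catoid interaction axioms against the four relational interchange laws $\chi,\zeta,\nu,\iota$ of Subsection~\ref{ss:relational}. First I would record the $n=1$ correspondence: a catoid structure $(\odot,s,t)$ on a set $C$ and a relational monoid structure $(M,U)$ on $C$ are the same data, via $M^z_{xy}\Leftrightarrow z\in x\odot y$, with $U=\operatorname{im}s=\operatorname{im}t$ in one direction and, in the other, $s(x)$ (resp.\ $t(x)$) the unique element of $U$ that is a right (resp.\ left) local unit of $x$ — existence and uniqueness of these local units following from the relational monoid unit and associativity axioms. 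This is the equivalence of catoids and relational monoids already stated in the excerpt (see also \cite{CraDohStr2,FahJohStrZie}); morphisms match, since a relation $F\colon C\to D$ is a homomorphism of relational monoids exactly when it commutes with $\odot$ and with $s,t$, and the translating functors are the identity on underlying sets.

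Next I would observe that both ``$n$-fold'' notions are built purely from interactions of pairs. A lax $n$-fold relational monoid is $n$ relational monoids such that each pair forms a lax double monoid: the triple (and higher) interchange diagrams impose nothing here, since in $\Rel$ every diagram of inclusions of relations commutes automatically, as already noted in Subsection~\ref{ss:catoids}. An $n$-catoid is likewise $n$ catoids together with conditions each of which mentions only two of the indices. Applying the $n=1$ dictionary structure-by-structure, it therefore suffices to prove the equivalence for $n=2$: that a $2$-catoid on $C$ is the same as a pair of relational monoids on $C$ forming a lax double monoid.

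The heart of the argument is then to verify, for a fixed pair of structures with $i<j$, that the $2$-catoid interaction axioms — the four commutations $s_is_j=s_js_i,\dots$; the subdistributivity laws $s_i(x\odot_jy)\subseteq s_i(x)\odot_j s_i(y)$ and their companions; the directed interchange $(w\odot_jx)\odot_i(y\odot_jz)\subseteq(w\odot_iy)\odot_j(x\odot_iz)$; the unit-nesting laws $s_js_i=s_i,\dots$; and the closure condition $s_j(s_j(x)\odot_i s_j(y))=s_j(x)\odot_i s_j(y)$ — are together equivalent, under the dictionary above, to the four relational laws $\chi,\zeta,\nu,\iota$. Going from the relational laws to the catoid axioms: $\chi$ is literally the directed interchange; $\iota$ becomes the inclusion of one unit subset in the other, which is exactly the unit-nesting laws once $s,t$ are recovered as local-unit maps; and $\zeta,\nu$, combined with $\chi$ and the single-structure catoid axioms, yield the commutations, the subdistributivity laws and the closure condition by direct relational calculation (for instance $\zeta$, which says every $i$-unit is a $\odot_j$-product of two $i$-units, together with the $\odot_j$-unit laws and the inclusion $U_i\subseteq U_j$, controls the behaviour of $s_j,t_j$ on such products). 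Conversely, given a $2$-catoid, $\chi$ is immediate and $\zeta,\nu,\iota$ are read off from the unit-nesting, closure, and subdistributivity axioms.

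I expect the main obstacle to be precisely the role of the closure condition. It is the axiom absent from the earlier definition of $n$-catoid in \cite{CMPS}, and the point is that without it the derived source and target maps of the $j$th structure need not be well behaved on $\odot_i$-products of $j$-units, so the recovered relational data can fail $\zeta$ or $\nu$ (or the homomorphism conditions for morphisms). Showing that the closure condition is exactly what bridges this gap, and that conversely it is forced by the relational laws, is the delicate bookkeeping of the proof; the remaining checks — the other catoid axioms, and that morphisms correspond so that the two functors are mutually inverse (an isomorphism on objects, hence an equivalence) — are routine, and the object-level part has been machine-checked in Isabelle/HOL as remarked.
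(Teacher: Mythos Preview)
Your plan is sound and in fact considerably more detailed than what the paper itself offers: the paper does not give a proof of this theorem beyond the assertion that it ``can be shown (and has been checked, on objects, using the Isabelle/HOL proof assistant)'', together with the remark that the closure condition is necessary, witnessed by a three-element counterexample found by Isabelle. Your reduction to $n=2$ via the observation that both structures are determined by pairwise data (and that triple interchange is vacuous in $\Rel$), the use of the known $n=1$ dictionary, and your identification of the closure condition as the delicate point all match the paper's surrounding discussion; you are simply supplying the argument the paper leaves to machine verification.

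One point to be careful about when you carry this out: your sketch of the backward direction (``given a $2$-catoid, $\zeta,\nu,\iota$ are read off from the unit-nesting, closure, and subdistributivity axioms'') is where the real work lies, and the paper's remark that a $2$-catoid \emph{without} the closure condition can fail to give a lax double relational monoid confirms that this step is not automatic. In particular, deriving $\zeta$ (that every $i$-unit decomposes under $\odot_j$ as a product of $i$-units) from the catoid axioms requires the closure condition together with the unit-nesting laws in a way that deserves to be spelled out; likewise showing that $\nu$ holds (that $\odot_i$-products of $j$-units are $j$-units) uses closure. You have correctly flagged this as the obstacle, but be prepared for the bookkeeping here to be the bulk of the proof rather than a footnote.
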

The closure condition is necessary: there is a three-element
$2$-catoid (produced by Isabelle) without the closure condition which
does not correspond to a lax double relational monoid.

Strict $n$-categories can be represented as a particular class of lax
$n$-fold relational monoids. It is therefore natural to ask which
$n$-catoids correspond to strict $n$-categories. Two definitions
single out those which do.
\begin{defn}[\cite{CraDohStr2, FahJohStrZie}]
A catoid $C$ is \emph{functional} if $x,x'\in y \odot z\Rightarrow x=x'$ for
all $x,x',y,z\in C$. It is \emph{local} if $t(x)=s(y)\Rightarrow
x\odot y \neq \emptyset$, and therefore $x\odot y\neq \emptyset
\Leftrightarrow t(x)=s(y)$.
\end{defn}
Obviously, locality imposes the composition pattern of arrows in
categories while functionality makes the multioperation in catoids a
partial operation.  It is already evident from the work of Mac Lane
\cite{MacL} on single-set categories that categories are equivalent to
local functional catoids.

\begin{defn}
  An $n$-catoid is \emph{functional} if each of the $n$ underlying
  catoids is. It is \emph{local} if each of the underlying catoids is.
\end{defn}

Local functional $n$-catoids satisfy the closure condition, and we
have checked using Isabelle that strict $2$-categories are equivalent
to local functional $2$-catoids. Since in both cases all the axioms
mention only two dimensions at a time, we have the following result.

\begin{thm}
Strict $n$-categories are equivalent to local functional $n$-catoids.
\end{thm}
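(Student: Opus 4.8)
The plan is to reduce the statement to the case $n=2$, which has been verified directly, by exploiting the fact that \emph{every} axiom occurring in the definition of a strict $n$-category or of a local functional $n$-catoid mentions at most two of the $n$ dimensions. Throughout I identify a strict $n$-category with the structure carried by its set $C$ of top-dimensional cells, as in the discussion above: this consists of $n$ partial composition operations $\odot_0,\dots,\odot_{n-1}$ together with source and target maps $s_i,t_i$ for $0\le i<n$, which is precisely the data of $n$ functional catoid structures on $C$. Hence the underlying data of the two notions coincide, and only the axioms need to be matched; one may carry this out directly with $n$-catoids, or transport everything through the equivalence between $n$-catoids and lax $n$-fold relational monoids established above.

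First I would separate the axioms on each side into a \emph{unary} part, whose axioms refer to a single dimension $i$, and a \emph{binary} part, whose axioms refer to a pair $i<j$. On the strict $n$-category side the unary part says that each $(C,\odot_i,s_i,t_i)$ is a category in the single-set sense, while the binary part consists, for each $i<j$, of the interchange law together with the compatibilities of $s_i,t_i$ with $s_j,t_j$ — that is, of the axioms of a strict $2$-category in dimensions $i$ and $j$. On the $n$-catoid side the unary part says each $(C,\odot_i,s_i,t_i)$ is a local functional catoid, and the binary part consists, for each $i<j$, of the pairwise axioms listed in the definition of an $n$-catoid (commutation of the source and target maps, subdistributivity of $s_i,t_i$ over $\odot_j$, interchange, and $s_j s_i=s_i$ together with its companions), as well as functionality and locality in both dimensions; by the remark preceding the statement, the closure condition for the pair $(i,j)$ is then automatic and so imposes nothing further.

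Next I would match the two parts. For the unary part this is Mac Lane's single-set description of categories: a category is exactly a local functional catoid, so for each fixed $i$ the dimension-$i$ unary axioms of a strict $n$-category hold if and only if those of a local functional $n$-catoid do. For the binary part, fixing a pair $i<j$ and forgetting the remaining dimensions yields on one side the axioms of a strict $2$-category and on the other those of a local functional $2$-catoid, and these are equivalent --- exactly the $n=2$ instance of the present statement, which has been checked with Isabelle/HOL. Since every axiom on either side is either unary or binary, a structure of one kind on $C$ is a structure of the other kind on the same $C$; moreover morphisms correspond in the same dimension-by-dimension way, a morphism of either kind being one that is simultaneously a morphism for each $\odot_i,s_i,t_i$ and compatible with each pairwise interaction, which is again governed by the $n=1$ (Mac Lane) and $n=2$ cases. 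This yields the asserted equivalence, and in fact an isomorphism of categories under the identification of the underlying data.

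The mathematical content is concentrated entirely in the $n=2$ case, which is the main obstacle and the reason that case is discharged by machine: it requires carefully matching the strict $2$-category interchange and globularity conditions against the $2$-catoid axioms, including verifying that the closure condition is forced once locality and functionality are imposed. The only point in the reduction itself that deserves care is the claim that the axioms of a strict $n$-category really are the conjunction of their restrictions to each pair of dimensions --- i.e.\ that forgetting all but two dimensions yields precisely a strict $2$-category, with no condition lost and none spuriously added. This is transparent from the iterated-enrichment description of strict $n$-categories (each pair of dimensions carries exactly a $2$-categorical structure) but should be spelled out explicitly before invoking the $n=2$ result.
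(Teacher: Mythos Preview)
Your proposal is correct and follows essentially the same approach as the paper: reduce to the $n=2$ case (verified in Isabelle) by observing that every axiom on either side mentions at most two dimensions, and note that the closure condition is automatic under locality and functionality. Your version is more explicit than the paper's---in particular, your separation into unary and binary parts and your remarks on morphisms spell out details the paper leaves implicit---but the underlying argument is the same.
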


\section{Algebra over the Boolean semiring}
\label{s:boolsemiring}

In this section, we explore a rich source of examples of the theory
above: algebraic, coalgebraic and bialgebraic structures in
(semi)lattices. Since we have taken the time to provide good
categorical machinery, it is possible to do much of our work uniformly
with only a few raw materials: we can regard semilattices as modules
over the Boolean semiring.

We start by laying out the relevant algebraic material. We do this in
as similar a fashion as possible to the better-known theory of
rings. Connes and Consani \cite{ConCon} provide similar constructions;
see also \cite{ImKhov} for an application of similar concepts in
automata theory. Much of this material provides examples of more
general categorical constructions, but there is enough generality
elsewhere and we have no need of it here.

\subsection{Semilattices and the boolean semiring}

Let $\bB$ denote the semiring of Booleans, the quotient of the free
semiring $\bN$ on no generators by the relation $1+1=1$.  A
\emph{$\bB$-module} is an abelian monoid $(M,+,0)$ with an action
$\bB\times M\to M$, satisfying the usual laws.

It is the same thing as an idempotent commutative monoid and hence as
a $\vee$-semilattice with join $\vee=+$ and least element
$0$. Idempotency follows from $1x+1x = (1+1)x = 1x$. Defining
$x\leq y$ if and only if $y=x+y$ gives the standard semilattice order.

The algebra of $\bB$ is as fundamental to the study of semilattices as
the algebras of $\bN$ and $\bZ$ are to that of their modules:
commutative monoids and abelian groups, respectively.

We write $\Mod_\bB$ for the category of $\bB$-modules (with the
evident notion of homomorphism). Each finite $\bB$-module is a
fortiori a complete semilattice and hence a complete lattice with
$x\wedge y = \sum_{z\leq x, z\leq y}z$. In much of what follows we
focus on finite modules.

The forgetful functor from the category of complete semilattices (and
join-preserving maps) to the category of posets has a left adjoint
$\bD$. For every finite poset $P$, $\bD P$ is the set of down-closed
subsets of $P$, where join is union, meet is intersection, the bottom
element is the empty set and the top element is $P$. One can think of
this construction as freely adding joins: a down-closed subset $X$
represents a join $\bigvee_{x\in X}x$ and, from this point of view,
the reason why we restrict to down-closed subsets is that a join over
a set is unchanged by down-closure. If $P$ is a finite poset, then all
joins are finite, and hence $\bD P$ a finite complete semilattice.

There is also an adjoint $\fin{\bD}$ to the forgetful functor from
semilattices (and maps which preserve finite joins) to posets. In this
case we take only finite joins, and so $\fin{\bD} P$ can be thought of
as the finitely-generated down-closed subsets. Of course, for finite
posets, $\bD$ and $\fin{\bD}$ agree.

The complement of every down-closed subset is evidently
up-closed. Further, complementation reverses inclusion and exchanges
unions and intersections.

\begin{lemma}
  For any poset $P$, $(\bD(P^{\op}))^\op\isom\bD P$.  \qed
\end{lemma}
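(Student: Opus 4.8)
The plan is to build the isomorphism explicitly from complementation, using the observations recorded immediately before the lemma. First I would note that a subset $D\subseteq P$ is down-closed in $P$ precisely when its complement $P\setminus D$ is down-closed in $P^{\op}$ (equivalently, up-closed in $P$). Hence $D\mapsto P\setminus D$ is a well-defined map on underlying sets from $\bD P$ to $\bD(P^{\op})$, and it is a bijection, being its own inverse via $P\setminus(P\setminus D)=D$.

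Next I would check the order-theoretic behaviour. For down-closed $D_1,D_2$ we have $D_1\subseteq D_2$ if and only if $P\setminus D_2\subseteq P\setminus D_1$, so complementation reverses inclusion. Since the partial order on $(\bD(P^{\op}))^\op$ is by definition the reverse of the inclusion order on $\bD(P^{\op})$, the map $D\mapsto P\setminus D$ is both order-preserving and order-reflecting as a map $\bD P\to(\bD(P^{\op}))^\op$, and a bijective such map is an isomorphism of posets.

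Finally, since $\bD$ takes values in complete semilattices (indeed complete lattices), I would upgrade this to an isomorphism in that category by recalling that complementation exchanges arbitrary unions and intersections, $P\setminus\bigcup_i D_i=\bigcap_i(P\setminus D_i)$ and dually. The join in $\bD(P^{\op})$ is union, hence the join in $(\bD(P^{\op}))^\op$ is intersection; so complementation carries the join (union) of $\bD P$ to the join (intersection) of $(\bD(P^{\op}))^\op$, and likewise for meets and for the bottom and top elements. This is precisely the asserted isomorphism.

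I do not anticipate a genuine obstacle: the content reduces to the two facts that complementation is an involutive bijection between down-closed and up-closed subsets and that it is inclusion-reversing and lattice-dualising. The only point requiring care is bookkeeping of the two opposite orders — in particular that $(-)^{\op}$ applied to $\bD(P^{\op})$ turns ``a larger upset'' into ``a smaller element'', which matches the fact that complementing a larger downset yields a smaller upset.
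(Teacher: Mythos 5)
Your argument is correct and is exactly the one the paper intends: the lemma is stated with its proof omitted because it follows immediately from the remarks preceding it, namely that complementation is an inclusion-reversing involution exchanging down-closed and up-closed subsets (and unions with intersections), which is precisely the isomorphism $D\mapsto P\setminus D$ you construct from $\bD P$ to $(\bD(P^{\op}))^\op$. Your final paragraph is harmless but not strictly needed, since a bijective order-preserving and order-reflecting map between complete lattices automatically preserves all joins and meets.
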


There is no analogue of this for $\fin{\bD}$: complements of
finitely-generated down-closed subsets can be infinitely-generated
up-closed subsets.

The category of $\bB$-modules has cartesian products; in keeping with
traditional notation for linear algebra we write $\oplus$ for the
product --  the \emph{direct sum}.

By contrast, the \emph{tensor product} of two $\bB$-modules $M$ and
$N$ is the $\bB$-module $M\otimes N$ generated by pairs $(m, n)$ for
$m\in M$ and $n\in N$, subject to the relations that
$(m, 0) = 0 = (0, n)$, $(m_1 + m_2, n) = (m_1, n) + (m_2, n)$ and
$(m, n_1 + n_2) = (m, n_1) + (m, n_2)$. The order on $M\otimes N$ is
given as $\sum_i (m_i, n_i)\leq \sum_j (m_j', n_j')$ if and only if
for each $i$ there is a $j$ such that $m_i\leq m_j'$ and
$n_i\leq n_j'$. This construction is identical to the better-known
construction of tensor products of modules over a commutative ring or
vector spaces over a field. In the language of semilattices, it has
been considered previously \cites{AndKim, GraLakQua}.

The tensor product yields a monoidal structure on $\Mod_\bB$. It is
defined on homomorphisms $f:M\rightarrow M'$ and $g:N\rightarrow N'$
by extending the formula
\[(f\otimes g)(m,n) = (f(m),g(n))\]
linearly. The unit is $\bB$ itself, thanks to the isomorphism
$M\rightarrow B\otimes M$ sending $m\mapsto (1,m)$. It thus differs
from the cartesian monoidal structure.

\begin{example}
If, for example, the $\bB$-modules $M$ and $N$ are both three-element
chains as depicted below, then their tensor product has six elements
(where we have shortened our pair notation $(m, n)$ to $mn$ for
brevity):
\begin{displaymath}
  \begin{tikzpicture}
    \begin{scope}[scale=0.75, shift={(-2,0)}]
    \node (0) at (0,-1) {0};
    \node (1) at (0,0) {1};
    \node (2) at (0,1) {2};
    \draw (0) -- (1);
    \draw (1) -- (2);
    \end{scope}
    \node at (-1,0) {$\otimes$};
    \begin{scope}[shift={(-0.5,0)}, scale=0.75]
    \node (0) at (0,-1) {0};
    \node (1) at (0,0) {1};
    \node (2) at (0,1) {2};
    \draw (0) -- (1);
    \draw (1) -- (2);
    \end{scope}
    \node at (0,0) {$=$};
    \begin{scope}[scale=0.75, shift={(1.75,0)}]
    \node (0) at (0,-2) {0};
    \node (11) at (0,-1) {11};
    \node (12) at (-1,0) {12};
    \node (21) at (1,0) {21};
    \node (1221) at (0,1) {12+21};
    \node (22) at (0,2) {22};
    \draw (0) -- (11);
    \draw (11) -- (12);
    \draw (11) -- (21);
    \draw (12) -- (1221);
    \draw (21) -- (1221);
    \draw (1221) -- (22);
    \end{scope}
  \end{tikzpicture}
\end{displaymath}
\end{example}

In fact, more is true.
\begin{prop}
The category of $\bB$-modules is monoidal closed, with internal hom
$\Hom(A,B)$ given by the set of $\bB$-module homomorphisms from $A$ to
$B$, with $\bB$-module operations defined pointwise.\qed
\end{prop}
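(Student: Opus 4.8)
The plan is, for each $\bB$-module $B$, to produce a right adjoint to the endofunctor $-\otimes B$ of $\Mod_\bB$, namely $\Hom(B,-)$; since $\otimes$ is symmetric it then does not matter whether one adjoins on the left or the right, so this suffices. First I would record that $\Hom(A,B)$ really is a $\bB$-module under the pointwise operations. The one thing needing care is that a pointwise join $f+g$ of two homomorphisms $A\to B$ is again a homomorphism: for $x,y\in A$,
\[
(f+g)(x)+(f+g)(y)=\bigl(f(x)+f(y)\bigr)+\bigl(g(x)+g(y)\bigr)=f(x+y)+g(x+y),
\]
using commutativity and idempotency of $+$ in $B$, and $(f+g)(0)=0$. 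The abelian-monoid axioms then hold pointwise and this addition is idempotent; by the identification recalled at the start of this section, an idempotent commutative monoid carries a unique compatible $\bB$-action, so nothing further is needed. (In particular the induced semilattice order on $\Hom(A,B)$ is the pointwise order.)

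The core step is to identify $\Hom(A\otimes B,C)$ with the set of \emph{bilinear maps} $A\times B\to C$ — functions additive in each variable and sending $(0,b)$ and $(a,0)$ to $0$ — and then to curry. The first identification is exactly the universal property of $A\otimes B$ read off its presentation: the relations imposed on the symbols $(a,b)$ state precisely that $(a,b)\mapsto\beta(a,b)$ extends to a (necessarily unique) $\bB$-module homomorphism if and only if $\beta$ is bilinear. Given a bilinear $\beta$, set $\hat\beta(a)=\beta(a,-)$; additivity in the second slot together with $\beta(a,0)=0$ makes each $\beta(a,-)$ a homomorphism $B\to C$, additivity in the first slot makes $\hat\beta$ additive, and $\beta(0,-)=0$ gives $\hat\beta(0)=0$, so $\hat\beta\in\Hom(A,\Hom(B,C))$. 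Conversely $\phi\in\Hom(A,\Hom(B,C))$ yields the bilinear map $(a,b)\mapsto\phi(a)(b)$, and these assignments are mutually inverse. Composing gives a bijection
\[
\Hom(A\otimes B,C)\isom\Hom\bigl(A,\Hom(B,C)\bigr).
\]

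It then remains to check that this bijection is natural in $A$, $B$ and $C$; this is a routine verification, since every map in sight is given by substitution of variables and pre- or post-composition. Naturality in $A$ and $C$ promotes the bijection to an adjunction $-\otimes B\dashv\Hom(B,-)$ for each $B$, while naturality in $B$ makes $\Hom(-,-)$ into the required internal hom bifunctor; hence $\Mod_\bB$ is monoidal closed. I expect the only step demanding real attention to be the identification of $\Hom(A\otimes B,C)$ with bilinear maps: because $\otimes$ was presented by generators and relations rather than by a universal property, one should observe that the vanishing conditions $\beta(0,b)=0=\beta(a,0)$ are genuinely part of the data — unlike the case of modules over a ring, they do not follow from biadditivity, since $\bB$ admits no subtraction — but they are precisely the relations $(0,n)=0=(m,0)$ in the presentation, so the correspondence is exact.
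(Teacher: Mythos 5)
Your argument is correct, and in fact the paper offers no proof at all for this proposition (it is stated with a \qed), so your write-up supplies exactly the argument being left to the reader: the presentation of $A\otimes B$ identifies $\Hom(A\otimes B,C)$ with bilinear maps, currying gives the natural bijection $\Hom(A\otimes B,C)\isom\Hom(A,\Hom(B,C))$, and symmetry of $\otimes$ upgrades this to monoidal closedness. You also correctly flag the one genuinely $\bB$-specific point, namely that the vanishing conditions $\beta(a,0)=0=\beta(0,b)$ are extra data rather than consequences of biadditivity (no subtraction is available, and idempotency makes the usual cancellation argument vacuous), which is precisely why they appear among the defining relations of the tensor product. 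One trivial nitpick: in checking that $f+g$ is a homomorphism you only need commutativity and associativity of $+$ in $B$, not idempotency; idempotency is what you need slightly later, to see that the pointwise sum on $\Hom(A,B)$ is again idempotent.
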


This means in particular that there is an isomorphism
\[\Hom(\Hom(A\otimes B),C)\isom\Hom(A,\Hom(B,C)).\]

\subsection{Multiplicative structure on $\bB$-modules}

A \emph{$\bB$-algebra}, a monoid object in $\bB$-modules, is nothing
but a \emph{dioid}: an additively idempotent semiring, or equivalently
a $\vee$-semilattice with a multiplication distributing over joins in
both arguments. We write $\Alg_\bB$ for the category of
$\bB$-algebras. If the underlying $\bB$-module is a complete
semilattice, then the algebra is known as a (unital) \emph{quantale}
\cite{RosenthalQ}.

The tensor product $A\otimes B$ of $\bB$-algebras $A$ and $B$ inherits
a $\bB$-algebra structure with product defined by
$(a_1,b_1)(a_2,b_2) = (a_1a_2,b_1b_2)$ on generators; it extended to
$A\otimes B$ by bilinearity. This yields a monoidal structure on
$\Alg_\bB$.

For a $\bB$-algebra $A$, we can define left and right $A$-modules in a
similar fashion. If $A$ is commutative these concepts coincide, and we
can define an $A$-algebra as a monoid object in $A$-modules. Such
modules and algebras have an underlying $\bB$-module or $\bB$-algebra
structure, respectively. In this case there is a tensor product
$M\otimes_A N$ for $A$-modules, which also has the relation that
$(ma)n = m(an)$ for $a\in A$, $m\in M$ and $n\in N$. Here the products
$ma$ and $an$ are from the structures of $A$-module on $M$ and $N$,
while the other products are the products defining $M\otimes_A N$.

For any $\bB$-algebra $A$, there is a forgetful functor from
$A$-modules to sets. It has a left adjoint, the \emph{free $A$-module}
on generating set $X$, written $A[X]$. This object, which in fact has
a natural $A$-algebra structure, is the set whose elements are maps of
sets $X\rightarrow A$ with \emph{finite support} (they are zero on all
but a finite subset of $X$), with sum and module action defined
pointwise. When $A=\bB$, we can view $\bB[X]$ as the set of finite
subsets of $X$, with sum as union and product as intersection. If $X$
is empty, then $A[X]$ is the terminal lattice (with one element); if
$X$ is a singleton, then $A[X]\isom A$.

We also use a related construction: we write $A^X$ for the collection
of all maps $X\rightarrow A$ with no restriction on support. This is
again an $A$-algebra with the same sum and product operations. We can
similarly regard $\bB^X$ to be the set of all subsets of $X$, with sum
as union and product as intersection. One key use of this construction
is that $\Hom(\bB[X], A)\isom A^X$: both sides are defined by maps of
sets from $X$ to $A$. Since all maps from finite sets have finite
support, we also have $A[X]\isom A^X$ whenever $X$ is finite.

These isomorphisms enable us to prove some structural properties of
the free module construction.
\begin{prop}
  The free module construction has two strong monoidal structures, one
  for each of the two monoidal structures on $\bB$-modules (the
  cartesian product $\oplus$, and the tensor product $\otimes$). In
  particular, we have $\bB[\emptyset]\isom 1$, $\bB[1]\isom\bB$, and
  \begin{align}
    \bB[X\sqcup Y] &\isom \bB[X]\oplus\bB[Y],\label{eq:Bmod-freesum}\\
    \bB[X\times Y] &\isom \bB[X]\otimes\bB[Y].\label{eq:Bmod-freeprod}
  \end{align}
\end{prop}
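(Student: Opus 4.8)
The plan is to use that $\bB[-]\colon\Set\to\Mod_\bB$ is the free $\bB$-module functor, left adjoint to the forgetful functor $U\colon\Mod_\bB\to\Set$, and to treat the two monoidal structures separately. Two of the claimed isomorphisms, $\bB[\emptyset]\isom 1$ and $\bB[1]\isom\bB$, have in effect already been recorded: $\bB[\emptyset]$ is the one-element lattice, which is also the zero module $0$ and the terminal object $1$, while $\bB[1]\isom\bB$ is noted when the tensor unit is discussed. It then remains to identify the direct-sum and tensor structures on $\bB$-modules with the images under $\bB[-]$ of the cocartesian and cartesian structures on $\Set$, together with coherent comparison isomorphisms.

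For the direct-sum structure, I would first record that in $\Mod_\bB$, as in any category of commutative monoids, finite coproducts and finite products coincide: the direct sum $M\oplus N$ with injections $m\mapsto(m,0)$ and $n\mapsto(0,n)$ is also the coproduct, since given homomorphisms $f\colon M\to P$ and $g\colon N\to P$ the map $(m,n)\mapsto f(m)+g(n)$ is the unique compatible homomorphism, and the zero module is both initial and terminal. Being a left adjoint, $\bB[-]$ preserves finite coproducts, so $\bB[X\sqcup Y]$ is the coproduct of $\bB[X]$ and $\bB[Y]$ and $\bB[\emptyset]$ is the initial object. By the observation in Section~\ref{s:examples-monoidal} that a finite-coproduct-preserving functor is canonically strong monoidal between the cocartesian structures, $\bB[-]$ is strong monoidal from $(\Set,\sqcup,\emptyset)$ to the cocartesian structure on $\Mod_\bB$, which by the previous sentence is $(\oplus,0)$. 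Identifying $\bB[X]$ with the lattice of finite subsets of $X$, the comparison isomorphism sends a finite $S\subseteq X\sqcup Y$ to $(S\cap X,\,S\cap Y)$; this proves \eqref{eq:Bmod-freesum}.

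For the tensor structure, the unit comparison is $\bB\isom\bB[1]$, and for the multiplication I would build a natural isomorphism $\mu_{X,Y}\colon\bB[X]\otimes\bB[Y]\xrightarrow{\ \sim\ }\bB[X\times Y]$ by Yoneda. Fix a $\bB$-module $P$. By the universal property of the $\bB$-module tensor product, a homomorphism $\bB[X]\otimes\bB[Y]\to P$ is the same as a bilinear map $\bB[X]\times\bB[Y]\to P$ (bilinear in the sense of the defining relations of $\otimes$). Since $\bB[X]$ and $\bB[Y]$ are free, such a bilinear map is freely and uniquely determined by its values on the pairs of generators $(\delta_x,\delta_y)$, hence corresponds to an arbitrary set map $X\times Y\to UP$, and therefore to a homomorphism $\bB[X\times Y]\to P$. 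These bijections are natural in $P$, so $\bB[X]\otimes\bB[Y]$ and $\bB[X\times Y]$ corepresent the same functor on $\Mod_\bB$ and are canonically isomorphic via $\delta_x\otimes\delta_y\mapsto\delta_{(x,y)}$. (One could instead note that $U$ is lax monoidal for $(\times,1)\to(\otimes,\bB)$, so by doctrinal adjunction $\bB[-]$ is automatically oplax monoidal, which supplies the comparison $\bB[X\times Y]\to\bB[X]\otimes\bB[Y]$; the remaining task is then just to check that this map is invertible.)

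Finally I would check the coherence diagrams making $(\bB[-],\mu,\mathrm{id}_\bB)$ a monoidal functor $(\Set,\times,1)\to(\Mod_\bB,\otimes,\bB)$. Since the source and target of each such diagram is a free $\bB$-module, it suffices to verify commutativity on generators: the associativity diagram sends $\delta_x\otimes\delta_y\otimes\delta_z$ to $\delta_{(x,y,z)}$ along either route, and the two unit diagrams send $\delta_x\otimes 1$ and $1\otimes\delta_x$ to $\delta_x$, precisely because the associator and unitors of $(\Set,\times,1)$ satisfy their own coherence conditions. This establishes \eqref{eq:Bmod-freeprod}. The only step demanding genuine care is the Yoneda argument, specifically the interaction of freeness with the universal property of the $\bB$-module tensor product --- that bilinear maps out of $\bB[X]\times\bB[Y]$ really coincide with set maps out of $X\times Y$ --- together with the routine verification that the comparison maps produced are mutually inverse and natural; beyond this there is no substantial obstacle.
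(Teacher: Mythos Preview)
Your proof is correct and shares the paper's core strategy --- a Yoneda argument --- but packages the two cases slightly differently. For \eqref{eq:Bmod-freesum} you argue abstractly that a left adjoint preserves coproducts and that $\oplus$ is a biproduct, whereas the paper computes $\Hom(\bB[X\sqcup Y],A)\isom A^{X\sqcup Y}\isom A^X\times A^Y\isom\Hom(\bB[X]\oplus\bB[Y],A)$ directly, using that $\oplus$ is a product. For \eqref{eq:Bmod-freeprod} you unpack the tensor via its universal property for bilinear maps, while the paper instead uses the closed monoidal structure recorded just above the proposition: it curries $A^{X\times Y}\isom(A^Y)^X$, identifies $A^Y\isom\Hom(\bB[Y],A)$, and applies the tensor--hom adjunction $\Hom(\bB[X],\Hom(\bB[Y],A))\isom\Hom(\bB[X]\otimes\bB[Y],A)$. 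The paper's route is marginally slicker for \eqref{eq:Bmod-freeprod} since the closed structure has already been established; your route has the merit of not depending on it. You also go further than the paper in explicitly checking the monoidal coherence diagrams, which the paper leaves implicit.
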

\begin{proof}
  We study both identities in terms of the maps out of these, for any
  sets $X$ and $Y$, into any $\bB$-module $A$. For
  (\ref{eq:Bmod-freesum}),
\begin{align*}
  \Hom(\bB[X\sqcup Y],A)
  &\isom A^{X\sqcup Y}\\
  &\isom A^X\times A^Y\\
  &\isom \Hom(\bB[X],A)\times\Hom(\bB[Y],A)\\
  &\isom\Hom(\bB[X]\oplus\bB[Y],A).
\end{align*}
For  (\ref{eq:Bmod-freeprod}),
\begin{align*}
  \Hom(\bB[X\times Y], A)
  &\isom A^{X\times Y}\\
  &\isom {(A^Y)}^X\\
  &\isom \Hom(\bB[X],A^Y)\\
  &\isom \Hom(\bB[X],\Hom(\bB[Y],A))\\
  &\isom\Hom(\bB[X]\otimes\bB[Y], A).
\end{align*}
The required isomorphisms then follow from the Yoneda lemma.
\end{proof}

There is also the following relation between free modules and tensors.
\begin{prop}
  Let $A$ be a $\bB$-module. Then $A[X]\isom A\otimes\bB[X]$.
\end{prop}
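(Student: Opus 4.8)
The plan is to prove the isomorphism $A[X]\cong A\otimes\bB[X]$ by exhibiting mutually inverse $\bB$-module maps, using the universal property of the free $A$-module on the left and the definition of the tensor product on the right. Since $A$ is described as a $\bB$-module (not necessarily a $\bB$-algebra), I read $A[X]$ here as the free $\bB$-module-with-$A$-action generated by $X$, i.e. the set of finitely-supported maps $X\to A$ with pointwise sum and $\bB$-action; equivalently $A[X]\cong\bigoplus_{x\in X}A$ in the finitely-supported sense. The key observation is that $\bB[X]$ is, by the same description, the free $\bB$-module on $X$ (the finite subsets of $X$ with union as sum), so $\bB[X]\cong\bigoplus_{x\in X}\bB$, and tensoring with $A$ should distribute over this coproduct.

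The first step is to record that $\otimes$ distributes over the coproduct $\oplus$ in $\Mod_\bB$ in each variable, so that $A\otimes\bB[X]\cong A\otimes\bigl(\bigoplus_{x\in X}\bB\bigr)\cong\bigoplus_{x\in X}(A\otimes\bB)$; this uses that $\otimes$ is a monoidal structure coming from a left adjoint (it is part of a monoidal-closed structure, by the proposition two items earlier), hence preserves colimits in each variable. The second step is the unit isomorphism $A\otimes\bB\cong A$ for the tensor monoidal structure on $\Mod_\bB$, already noted in the text (the map $m\mapsto(1,m)$). Combining these gives $A\otimes\bB[X]\cong\bigoplus_{x\in X}A\cong A[X]$, the last isomorphism being the description of the free $A$-module. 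Alternatively, and perhaps more cleanly, one argues by the Yoneda lemma exactly as in the preceding proposition: for any $\bB$-module (or $A$-module) $B$ one computes
\[
\Hom(A\otimes\bB[X],B)\cong\Hom(\bB[X],\Hom(A,B))\cong \Hom(A,B)^X\cong\Hom(A[X],B),
\]
where the first isomorphism is the tensor--hom adjunction (monoidal closure of $\Mod_\bB$), the second is the free-module/function-set adjunction $\Hom(\bB[X],M)\cong M^X$, and the third is the universal property of $A[X]$ as the free $A$-module on $X$ together with the identification of $A$-module maps $A[X]\to B$ with set maps $X\to B$, i.e. with elements of $\Hom(A,B)^X$ once one keeps track of the $A$-action; the isomorphism $A[X]\cong A\otimes\bB[X]$ then follows from Yoneda.

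The main obstacle, and the only point needing genuine care, is bookkeeping the $A$-action: one must check that the natural isomorphisms above are compatible with the left $A$-module structure, not merely with the underlying $\bB$-module structure, so that Yoneda is being applied in the right category ($A$-modules, when $A$ is commutative, or at least a category where the universal property of $A[X]$ lives). Concretely, on the explicit side the map $A[X]\to A\otimes\bB[X]$ sends a finitely-supported $f:X\to A$ to $\sum_{x\in\mathrm{supp}(f)}f(x)\otimes\{x\}$, and its inverse sends a generator $a\otimes S$ (for $S$ a finite subset of $X$) to the function taking value $a$ on each element of $S$ and $0$ elsewhere; one verifies these are well-defined (respecting the bilinearity relations defining $\otimes$ and the $\bB$-idempotency), mutually inverse, and $A$-linear. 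All of this is routine, so I would present the Yoneda-style argument as the main line and leave the explicit maps as a remark.
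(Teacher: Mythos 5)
Your main (Yoneda-style) argument is correct and is essentially the paper's own proof: both compute $\Hom(-,B)$ of the two sides, using the tensor--hom adjunction and the identity $\Hom(\bB[X],M)\isom M^X$, and conclude by Yoneda; you merely curry out $A$ first (which uses symmetry of $\otimes$) where the paper curries out $\bB[X]$. The explicit-map and coproduct-distributivity variants you sketch are fine but not needed.
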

\begin{proof}
  We consider again homomorphisms into a $\bB$-module $B$. Thus
  \begin{align*}
    \Hom(A[X], B)
    &\isom \Hom(A, B^X)\\
    &\isom \Hom(A, \Hom(\bB, B^X))\\
    &\isom \Hom(A, \Hom(\bB[X], B)) \\
    &\isom \Hom(A\otimes\bB[X], B),
  \end{align*}
  and the isomorphism follows again from Yoneda.
\end{proof}

We need the following further properties of the $\Hom$ construction.
\begin{prop}
  \label{prop:bmod-hom-lax}
Using the tensor product monoidal structure, the functor
$\Hom:\Mod_\bB^\op\times\Mod_\bB\rightarrow\Mod_\bB$ is lax monoidal.
\end{prop}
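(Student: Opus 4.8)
The plan is to exhibit the unit and multiplication maps of the lax monoidal structure explicitly, and then verify the three coherence diagrams, leaning throughout on the symmetric monoidal closed structure on $\Mod_\bB$. Recall that $\Mod_\bB^\op\times\Mod_\bB$ carries the product of the tensor monoidal structure with itself, so $(A_1,B_1)\otimes(A_2,B_2)=(A_1\otimes A_2,\ B_1\otimes B_2)$, the unit is $(\bB,\bB)$, and a morphism $(A_1,B_1)\to(A_2,B_2)$ in this category is a pair $(f\colon A_2\to A_1,\ g\colon B_1\to B_2)$. I would take the unit $\varepsilon\colon\bB\to\Hom(\bB,\bB)$ to be the $\bB$-linear map sending $1$ to $\mathrm{id}_\bB$ — equivalently the inverse of the canonical isomorphism $\Hom(\bB,\bB)\isom\bB$. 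For the multiplication I would use the tensor product of homomorphisms: the assignment $(\phi,\psi)\mapsto\phi\otimes\psi$, from $\Hom(A_1,B_1)\times\Hom(A_2,B_2)$ to $\Hom(A_1\otimes A_2,\ B_1\otimes B_2)$, is $\bB$-bilinear — this follows from the defining relations of the tensor of $\bB$-modules and the pointwise definition of sums of homomorphisms, since $(\phi\otimes\psi)(m,n)=(\phi m,\psi n)$ on generators — so it factors through a $\bB$-linear map
\[\mu\colon\Hom(A_1,B_1)\otimes\Hom(A_2,B_2)\longrightarrow\Hom(A_1\otimes A_2,\ B_1\otimes B_2).\]

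Naturality of $\mu$ with respect to pairs $(f_i,g_i)$ as above is immediate from bifunctoriality of $\otimes$ on $\Mod_\bB$: both routes round the square send $\phi\otimes\psi$ to $(g_1\phi f_1)\otimes(g_2\psi f_2)=(g_1\otimes g_2)\circ(\phi\otimes\psi)\circ(f_1\otimes f_2)$. For the associativity coherence diagram, both composites carry a generator $\phi\otimes\psi\otimes\theta$ to the homomorphism $\phi\otimes\psi\otimes\theta\colon A_1\otimes A_2\otimes A_3\to B_1\otimes B_2\otimes B_3$, the two sides differing only by conjugation with the associators of $\otimes$ on $\Mod_\bB$; so the diagram commutes by the coherence already available for that associator, together with the strict associativity of tensoring homomorphisms. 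The two unit diagrams reduce similarly to the facts that $\mathrm{id}_\bB\otimes\phi$ corresponds to $\phi$ under the unitor $\bB\otimes A\isom A$, and dually $\phi\otimes\mathrm{id}_\bB$ under $A\otimes\bB\isom A$, combined with the definition of $\varepsilon$.

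A slicker route, which I would mention but not rely on, is the general fact that the internal hom of any symmetric monoidal closed category is lax monoidal for the tensor: $\mu$ is the currying of
\[\bigl(\Hom(A_1,B_1)\otimes\Hom(A_2,B_2)\bigr)\otimes(A_1\otimes A_2)\xrightarrow{\ \sim\ }\bigl(\Hom(A_1,B_1)\otimes A_1\bigr)\otimes\bigl(\Hom(A_2,B_2)\otimes A_2\bigr)\xrightarrow{\ \mathrm{ev}\otimes\mathrm{ev}\ }B_1\otimes B_2,\]
where the first map reorders factors using the symmetry of $\otimes$, and $\varepsilon$ is the currying of the unitor $\bB\otimes\bB\isom\bB$; the coherence diagrams then follow formally from the triangle and pentagon identities and the adjunction $-\otimes A\dashv\Hom(A,-)$.

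The only point demanding care is the bookkeeping around the contravariance in the first slot: one must use the tensor monoidal structure on $\Mod_\bB^\op$, hence the product monoidal structure on $\Mod_\bB^\op\times\Mod_\bB$, and orient the naturality squares accordingly. Once that is set up, every verification is a routine consequence of bifunctoriality of $\otimes$ and the coherence already established for it on $\Mod_\bB$, so there is no genuinely hard step.
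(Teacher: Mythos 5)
Your construction is exactly the paper's: the unit is the canonical map $\bB\rightarrow\Hom(\bB,\bB)$ and the multiplication sends $\phi\otimes\psi$ to the induced homomorphism on the tensor product, with the paper leaving the remaining verifications as "quick to check" where you spell them out (plus the optional closed-category argument). So this is correct and essentially the same proof, just with more of the routine coherence checks made explicit.
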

\begin{proof}
The unit is a morphism $\bB\rightarrow\Hom(\bB,\bB)$; we can take
this to be the identity map. The multiplication is a map
\[\Hom(M_1,N_1)\otimes\Hom(M_2,N_2)\longrightarrow\Hom(M_1\otimes
M_2,N_1\otimes N_2).\]
On objects it is defined by sending $f_1\otimes f_2$ to the map
sending $m_1\otimes m_2$ to $f_1m_1\otimes f_2m_2$; it is quick to
check that this is well-defined.
\end{proof}

We are able to identify the algebraic dual $\Hom(M,\bB)$ with the
better-known combinatorial dual $M^\op$.
\begin{prop}
  Let $M$ be a finite $\bB$-module. Then the $\bB$-module
  $\Hom(M,\bB)$ is the dual lattice $M^\op$.
\end{prop}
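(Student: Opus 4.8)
The plan is to exhibit an explicit isomorphism of $\bB$-modules between $\Hom(M,\bB)$ and $M^\op$ by identifying both with a combinatorially natural set, namely the set of down-closed subsets (equivalently, up-closed subsets) of $M$ under a suitable order, and then checking that this identification carries the $\bB$-module operations of $\Hom(M,\bB)$ (which are defined pointwise) to those of $M^\op$ (which are the original $\bB$-module operations with the order reversed, so that $+$ in $M^\op$ is $\wedge$ in $M$).

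First I would recall that since $M$ is finite it is a complete lattice, so joins and meets of arbitrary subsets exist. A $\bB$-module homomorphism $f:M\rightarrow\bB$ is a join-preserving map sending $0$ to $0$; such a map is determined by the set $f^{-1}(0)=\{x\in M: f(x)=0\}$, which (because $f$ preserves finite joins and $f(0)=0$) is a down-closed subset of $M$ closed under finite joins, hence a principal downset $\mathord{\downarrow}a$ for a unique $a\in M$ (take $a=\bigvee f^{-1}(0)$, which still maps to $0$). Conversely every $a\in M$ gives the homomorphism $f_a$ with $f_a(x)=0$ iff $x\leq a$ and $f_a(x)=1$ otherwise; one checks $f_a$ preserves joins. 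This sets up a bijection $\Hom(M,\bB)\cong M$ of underlying sets, $f\leftrightarrow a$.

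Next I would track the module structure across this bijection. The zero of $\Hom(M,\bB)$ is the constant map $0$, which corresponds to $a=\top$ (the top of $M$), i.e.\ the $0$ of $M^\op$. For the addition: $(f\vee g)(x)=f(x)\vee g(x)$ is $0$ iff both $f(x)=0$ and $g(x)=0$, i.e.\ iff $x\leq a$ and $x\leq b$, i.e.\ iff $x\leq a\wedge b$; so $f_a\vee f_b=f_{a\wedge b}$, and $\wedge$ in $M$ is exactly $\vee$ in $M^\op$. Finally the $\bB$-action is the identity action on both sides (the nonzero scalar acts trivially on any idempotent module), so it is automatically preserved. Hence $f\mapsto a$ is an isomorphism of $\bB$-modules $\Hom(M,\bB)\cong M^\op$, which is what we want (and it is even an order isomorphism: the pointwise order $f\leq g$ corresponds to $a\leq_{M^\op} b$).

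The only delicate point — the ``main obstacle'' — is the claim that a $\bB$-module homomorphism $M\rightarrow\bB$ really is determined by its kernel being a principal downset, which uses finiteness of $M$ in an essential way: one needs $f^{-1}(0)$ to have a greatest element, and for infinite $M$ this can fail, so the finiteness hypothesis is genuinely used here. Everything else is a routine transcription of definitions, which I would carry out without further comment.
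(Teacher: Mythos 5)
Your proposal is correct and follows essentially the same route as the paper: identify a homomorphism $f:M\rightarrow\bB$ with the top element of the principal downset $f^{-1}(0)$ (using finiteness so that this join exists and lies in $f^{-1}(0)$), invert via the maps $f_a$ with $f_a(x)=0$ iff $x\leq a$, and check that pointwise addition corresponds to meet in $M$, i.e.\ join in $M^\op$. Your additional remarks on the zero element, the trivial scalar action, and the role of finiteness are fine elaborations of the same argument.
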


\begin{proof}
  Any homomorphism $f:M\rightarrow\bB$ is determined by the set
  $f^{-1}(0)$, which is join-closed. Every element of $f^{-1}(0)$ is
  below $s_f=\sum_{x\in f^{-1}(0)}x$ by definition, and
  $s_f\in f^{-1}(0)$ as well, because
  $f(s_f) = \sum_{x\in f^{-1}(0)}f(x)=\sum_{x\in f^{-1}(0)}0$.  Hence
  $f$ is determined by the property that the elements it sends to zero
  is exactly the down-closure of $s_f$, that is,
  $f^{-1}(0) = s_f{\downarrow}$.

Conversely, for any $x\in M$, define $f_x:M\rightarrow\bB$ by
\begin{equation*}
  f_x(y) =
  \begin{cases}
    0 &\text{ if } y\le x,\\
1 & \text{ otherwise}.
  \end{cases}
\end{equation*}
This establishes a bijection between the homomorphisms $M\rightarrow
\bB$ and the elements of $M$.

Moreover, $f_x\leq f_y$ if and only if $y\leq x$, so functions form
the dual lattice. Also, pointwise addition corresponds to join in the
dual lattice (which is meet in the original lattice), since
$f_x(z)+f_y(z)=0$ if and only if $z\leq x\wedge y$, which is the case
if and only if $f_{x\wedge y}(z)=0$.
\end{proof}

\subsection{Meets}

It is worth asking to what extent our constructions preserve lattice
structure, in particular completeness.

This is not always the case. Indeed, even though $\bB$ is a lattice,
$\bB[X]$ is not a lattice if $X$ is an infinite set, as the top
element would have to be the constant top function $X\rightarrow\bB$,
and this is not equal to the bottom element on all but a finite subset
of $X$.

However, we have the following two facts.
\begin{prop}
  If $m_1$ and $m_2$ have a meet $m_1\wedge m_2$ in $M$, and $n_1$ and
  $n_2$ have a meet $n_1\wedge n_2$ in $N$, then $(m_1\wedge
  m_2)(n_1\wedge n_2)$ is a meet of $m_1n_1$ and $m_2n_2$ in $M\otimes
  N$.
\end{prop}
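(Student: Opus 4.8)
The plan is to work directly from the explicit description of the partial order on $M\otimes N$ recalled above and verify the two defining properties of a greatest lower bound. The key observation is how that order specialises when the right-hand element is a single generator $q=(e,f)$: the condition $\sum_i(a_i,b_i)\le(e,f)$ unwinds to ``$a_i\le e$ and $b_i\le f$ for every $i$''. In particular $(a,b)\le(e,f)$ if and only if $a\le e$ and $b\le f$, and more generally a finite sum of generators lies below $q$ precisely when every summand does. Note also that the element $(m_1\wedge m_2,n_1\wedge n_2)$ is a legitimate element to compare against, since the two meets are assumed to exist.

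First I would check that $(m_1\wedge m_2,n_1\wedge n_2)$ is a lower bound of $(m_1,n_1)$ and $(m_2,n_2)$: from $m_1\wedge m_2\le m_1$ and $n_1\wedge n_2\le n_1$ the componentwise criterion gives $(m_1\wedge m_2,n_1\wedge n_2)\le(m_1,n_1)$, and symmetrically $(m_1\wedge m_2,n_1\wedge n_2)\le(m_2,n_2)$.

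Next, let $p=\sum_{i\in I}(a_i,b_i)$ be an arbitrary lower bound of $(m_1,n_1)$ and $(m_2,n_2)$. Unwinding $p\le(m_1,n_1)$ with the single-generator form of the order gives $a_i\le m_1$ and $b_i\le n_1$ for all $i$, and $p\le(m_2,n_2)$ gives $a_i\le m_2$ and $b_i\le n_2$ for all $i$. Hence for each $i$ the element $a_i$ is below both $m_1$ and $m_2$, so $a_i\le m_1\wedge m_2$, and likewise $b_i\le n_1\wedge n_2$; thus $(a_i,b_i)\le(m_1\wedge m_2,n_1\wedge n_2)$ for every $i$, and by the elementary fact above $p\le(m_1\wedge m_2,n_1\wedge n_2)$. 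Therefore $(m_1\wedge m_2,n_1\wedge n_2)$ is the greatest lower bound, as claimed.

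There is no serious obstacle here; the only care needed is bookkeeping around the asymmetry in the definition of the order on $M\otimes N$ — the universal quantifier ranges over the summands of the \emph{left} element and the existential over those of the \emph{right} — which is exactly why comparing \emph{against} a single generator collapses the existential to the clean componentwise statement, while expressing $p$ as a sum of several generators is harmless because the order is described representation-by-representation. It is also worth noting that the argument never assumes that $M$, $N$ or $M\otimes N$ is a lattice: only the three specific meets named in the statement need exist.
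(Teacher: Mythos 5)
Your proposal is correct and follows essentially the same route as the paper: both use the explicit description of the order on $M\otimes N$, specialised to comparison against a single generator, to reduce everything to the defining property of the meets in $M$ and $N$. The paper merely compresses your two steps (lower bound, then greatest) into a single chain of equivalences $x\leq m_1n_1 \text{ and } x\leq m_2n_2 \Leftrightarrow x\leq(m_1\wedge m_2)(n_1\wedge n_2)$.
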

\begin{proof}
  Consider an element $\sum_i m'_in'_i$ in $M\otimes N$. It is less
  than $m_1n_1$ if and only if $m'_i\leq m_1$ and $n'_i\leq n_1$ for
  all $i$; similarly it is less than $m_2n_2$ if and only if $m'_i\leq
  m_2$ and $n'_i\leq n_2$ for all $i$.

  Both are the case if and only if $m'_i\leq m_1\wedge m_2$ and
  $n'_i\leq n_1\wedge n_2$ for all $i$, which in turn happens if and
  only if $\sum_im'_in'_i\leq (m_1\wedge m_2)(n_1\wedge n_2)$. This is
  exactly the defining property of a meet.
\end{proof}

\begin{prop}
  If $X$ and $Y$ are lattices, then so is $X\otimes Y$.
\end{prop}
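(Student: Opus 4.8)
The plan is to give an explicit formula for binary meets in $X\otimes Y$ and to verify it, using the previous proposition (which handles pure tensors) together with the description of the order on a tensor product recalled earlier in this section. Recall that every element of $X\otimes Y$ is a finite join of pure tensors $m\otimes n$, and that the order is given by $\sum_i m_i\otimes n_i\leq\sum_j m_j'\otimes n_j'$ precisely when for every $i$ there is a $j$ with $m_i\leq m_j'$ and $n_i\leq n_j'$. Since $X\otimes Y$ is a $\bB$-module it already has all finite joins, and in particular a least element; so it suffices to exhibit binary meets.

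First I would record the special case of the order description that does the real work: if $r$ is a single pure tensor and $b=\bigvee_j q_j$ is written as a join of pure tensors, then $r\leq b$ if and only if $r\leq q_j$ for some $j$. The forward implication is the order description applied with a one-term sum on the left, and the converse is trivial.

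Next, for $a=\bigvee_{i=1}^k p_i$ and $b=\bigvee_{j=1}^l q_j$ with all $p_i,q_j$ pure tensors, I claim that $a\wedge b=\bigvee_{i,j}(p_i\wedge q_j)$. Each meet $p_i\wedge q_j$ exists by the previous proposition: writing $p_i=m_i\otimes n_i$ and $q_j=m_j'\otimes n_j'$, it equals $(m_i\wedge m_j')\otimes(n_i\wedge n_j')$, using that $X$ and $Y$ are lattices. The right-hand side is clearly a lower bound of $a$ and of $b$ since $p_i\wedge q_j\leq p_i\leq a$ and $p_i\wedge q_j\leq q_j\leq b$. For maximality, take any lower bound $c$ and write $c=\bigvee_s r_s$ as a join of pure tensors; for each $s$ we have $r_s\leq c\leq a$, so by the observation recorded above $r_s\leq p_i$ for some $i$, and similarly $r_s\leq q_j$ for some $j$, whence $r_s\leq p_i\wedge q_j\leq\bigvee_{i,j}(p_i\wedge q_j)$. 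Joining over $s$ gives $c\leq\bigvee_{i,j}(p_i\wedge q_j)$, which completes the verification.

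I do not anticipate a genuine obstacle; the one point to be careful about is that a meet of joins does not distribute in a general lattice, so the formula genuinely relies on the specific tensor-product order, and the representation as a join of pure tensors must be used both for $a,b$ and for an arbitrary lower bound $c$. If ``lattice'' is intended in the bounded sense, one would add the remark that when $X$ and $Y$ have top elements $\top_X,\top_Y$ the pure tensor $\top_X\otimes\top_Y$ is the top of $X\otimes Y$ (again immediate from the order description), while $0$ is the bottom, so $X\otimes Y$ is bounded.
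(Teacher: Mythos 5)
Your proof is correct and takes essentially the same route as the paper: the same candidate meet $\bigvee_{i,j}(p_i\wedge q_j)$, built from the preceding proposition on meets of pure tensors. The paper compresses the verification into a one-line appeal to ``distributivity of meets and joins''; your explicit check via the stated order on the tensor product (a pure tensor lies below a join of pure tensors precisely when it lies below some term) just spells out the justification that appeal relies on.
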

\begin{proof}
  We must provide a meet for arbitrary elements $\sum_im_in_i$ and
  $\sum_jm'_jn'_j$. Yet distributivity of meets and joins tells us
  that $\sum_{i,j}(m_i\wedge m'_j)(n_i\wedge n'_j)$ works.
\end{proof}

\subsection{Algebraic structures on free $\bB$-modules}
\label{ss:asb}

Our interest in $\bB$-modules is partly that free $\bB$-modules yield
an algebraic description of binary relations. By the results of the
previous section, relations from $X$ to $Y$ are functions
$X\times Y\rightarrow\bB$, hence $\bB$-module morphisms from
$\bB[X\times Y]$ to $\bB$, $\bB$-module morphisms from
$\bB[X]\otimes\bB[Y]$ to $\bB$ and finally $\bB$-module morphisms from
$\bB[X]$ to $\bB^Y$.

The category $\Rel$ can in general also be regarded as the subcategory
of $\Mod_\bB$ whose objects are the $\bB$-modules $\bB^X$ and whose
morphisms preserve all joins (not just the finite ones). This has been
described in the proof of Theorem 1.1 in \cite{Rosenthal}, and the
beginning of \cite{KenPar}.

However, in the presence of finiteness conditions, this algebraic
approach works out more pleasantly. The category of finite sets and
relations can be regarded as the subcategory of $\Mod_\bB$ on the
finitely generated free $\bB$-modules. Thus we assume $X$ to be finite
in what follows.

A dioid structure whose underlying $\bB$-module is a free $\bB$-module
$\bB[X]$ has a product $\bB[X\times X]\rightarrow\bB[X]$, which is the
same thing as a ternary relation on $X$, and a unit
$\bB\rightarrow\bB[X]$, which is the same thing as a subset of $X$;
this is thus a relational monoid as considered in Subsection
\ref{s:examples-monoidal}.

Moreover, the free $\bB$-module construction can be extended to a
\emph{lax monoidal functor} \cite{JohYau} from the monoidal bicategory
$\Rel$ to the monoidal bicategory $\Mod_\bB$. This preserves $n$-fold
lax monoid objects (as considered in subsection \ref{ss:catoids}): a
relational $n$-fold monoid is taken to a lax $n$-fold monoid in
semilattices.

\begin{example}
\label{ex:meets-as-product}
One source of examples that sets this situation apart from algebras
over rings or other semirings is that some finite $\bB$-module have
natural $\bB$-algebra structure: (as has been mentioned) every finite
semilattice is a lattice. If that lattice is distributive, then meets
provide a multiplication.
\end{example}

\begin{example}
\label{ex:shuffle-languages}
As one example of particular interest, the lax double monoid structure
given by concatenation and shuffle product of words, which was
considered earlier in Subsection \ref{ss:relational}, can be lifted to
the set $\bP M$ of languages, which is the free $\bB$-module on the
set of words.

The first monoid structure lifts concatenation to the complex product
\[X\cdot Y = \{c\mid a\in X, b\in Y,c = ab\},\]
and the second lifts shuffle to the complex product
\[X\shuffle Y = \{c\mid a\in A, b\in B, c\in a\shuffle b\}.\]
Both products have the same unit $1=\{e\}$, the empty word language.

The lax interchange law on languages
\begin{align*}
  (W\cdot X)\shuffle(Y\cdot Z) \subset (W\shuffle Y)\cdot(X\shuffle Z)
\end{align*}
is then the interchange law of the lax double monoid structure, which
is in fact a concurrent monoid in the sense of~\cite{HoaMolStrWeh}.
\end{example}

\subsection{Coalgebras}
\label{ss:coalgebras}

We have considered $\bB$-algebras as a type of monoid; we turn now to
the corresponding concept of comonoid.

We define a \emph{$\bB$-coalgebra} (which might reasonably be called a
\emph{codioid}) to be a comonoid object in the category of
$\bB$-modules (with tensor product). This is analogous to how
$\bB$-algebras and hence dioids are monoid objects in the category of
$\bB$-modules.

$\bB$-coalgebras with underlying $\bB$-module of the form $\bB^X$
(with $X$ finite) are equivalent to $\bB$-algebras with the same
underlying $\bB$-module. Indeed, such a thing consists of a relation
$X\rightarrow X\times X$ and another relation $X\rightarrow 1$,
satisfying relations opposite to those of a dioid. Since the category
of relations is self-opposite, they are the same.

However, for other kinds of $\bB$-modules, they may be different
structures. For example, the five-element $\bB$-module $M$ defined as
\begin{displaymath}
  \begin{tikzpicture}
    \node (0) at (0,-1) {$0$};
    \node (1) at (-1,0) {$1$};
    \node (2) at (0,0) {$2$};
    \node (3) at (1,0) {$3$};
    \node (123) at (0,1) {$1+2+3$};
    \draw (0) -- (1);
    \draw (0) -- (2);
    \draw (0) -- (3);
    \draw (1) -- (123);
    \draw (2) -- (123);
    \draw (3) -- (123);
  \end{tikzpicture}
\end{displaymath}
can be thought of as the collection of subsets of $\{1,2,3\}$ that
contain all three if they contain any two. Similarly, the tensor
product $M\otimes M$ can be thought of as the collection of subsets of
$\{1,2,3\}^2$ with the property that, if they contain any two points
on a horizontal or vertical line, then they contain all three. It can
quickly be checked that there are fifty such subsets.

$M$ has fifty commutative $\bB$-algebra structures: probably the most
obvious of them is the one defined by intersection of
subsets. However, it does not carry any cocommutative $\bB$-coalgebra
structures.

\begin{example}
A monoid $M$ is \emph{finitely decomposable} if, for any $x\in M$,
there are only finitely many pairs $(y_1,y_2)$ for which
$y_1y_2=x$. Examples include all finite monoids and all free
monoids. An example of a monoid which is not finitely decomposable is
given by the integers $\bZ$ under addition: $x+(-x)=0$ for any $x$.

Any finitely decomposable monoid $M$ yields a $\bB$-coalgebra
structure on $\bB[M]$, with comultiplication defined by
\[m(x)=\sum_{\substack{y_1,y_2\in M\\y_1y_2=x}}y_1y_2.\]
It is cocommutative if $M$ is commutative.

The preceding construction works over any semiring (including the
integers), but there is a variant which works only over the special
case of the booleans. If $M$ is an ordered finitely decomposable
monoid, we can instead obtain a $\bB$-coalgebra structure on $\bB[M]$
with comultiplication defined by
\[m'(x)=\sum_{\substack{y_1,y_2\in M\\y_1y_2\leq x}}y_1y_2.\]
Again, it is cocommutative if $M$ is commutative.

The first example is coassociative over any semiring because, for any
$x$, there is a bijection between solutions to the equations $aw=x$,
$uv=a$ and solutions to the equations $ub=x$, $vw=b$ (both are just a
trio of elements $u$, $v$, $w$ with $uvw=x$). However, the second
example is not coassociative over $\bZ$. Indeed, there is in general
no bijection between solutions to the above if the equations are
replaced by inequalities. However, there is a bijection between
triples $u$, $v$, $w$ that work (we just seek $uvw\leq x$) ensuring
coassociativity over $\bB$.
\end{example}

\begin{example}
The incidence coalgebras from \cite{JonRot} can be adapted to the
setting of $\bB$-modules. The \emph{incidence coalgebra} $I_P$ on a
poset $P$ is the free $\bB$-module generated by expressions $[x,y]$
for all $x\leq y\in P$. The coproduct and counit are given by
\begin{displaymath}
  \Delta[x,y] = \sum_{x\leq z\leq y}[x,z]\otimes[z,y],\qquad
  \varepsilon[x,y] =
  \begin{cases}
    1, & \text{if $x=y$};\\
    0, & \text{otherwise}.
  \end{cases}
\end{displaymath}
Incidence coalgebra are usually defined over fields. But they make no
reference to the subtraction, and hence are defined over semirings
such as $\bB$ here.
\end{example}

\begin{example}
  \label{ex:incidence}
  Variants to the incident coalgebra can be made more specific to
  $\bB$. Let $P$ be a finite poset. We write $C_P$ for the set of
  convex subsets of $P$: the subsets such that, if $x\leq z\leq y$ and
  $x,y\in P$, then $z\in P$ also.

  The set $C_p$ forms a semilattice under intersections and thus a
  $\bB$-module. It can be made into a $\bB$-coalgebra in two different
  ways, $C_P^+$ and $C_P^-$:
\begin{itemize}
\item the \emph{strictly ordered coproduct}, where $\Delta^+(A)$ is
  the sum over $B\otimes C$, where $A$ is the disjoint union of $B$
  and $C$ and $b\leq c$ for every $b\in B$ and $c\in C$.
\item the \emph{weakly ordered coproduct}, where $\Delta^-(A)$ is the
  sum over $B\otimes C$, where $A$ is the disjoint union of $B$ and
  $C$ and there do not exist $b\in B$ and $c\in C$ with $b>c$.
\end{itemize}
In both cases the counit is given by $\varepsilon(A)=1$ if $A$ is
empty and $0$ otherwise. Also, in both cases, the compatibility of the
coproduct with the $\bB$-module structure can be checked quickly.
\end{example}

\subsection{Double algebras}
\label{ss:double}

We define a \emph{double $\bB$-algebra} (in the literature on
concurrency theory, such as in \cite{HoaMolStrWeh}, known as a
\emph{concurrent dioid}, or even as a \emph{trioid}) to be a lax
double monoid object in $\bB$-modules.

In more detail, it is a $\bB$-module with two products $\cdot_1$ and
$\cdot_2$, with products $1_1$ and $1_2$ respectively, satisfying
interchange laws which state:
\begin{itemize}
\item $(w \cdot_2 x)\cdot_1 (y\cdot_2 z) \leq (w \cdot_1 y)\cdot_2(x\cdot_1 z)$;
\item $1_2\cdot_11_2\leq 1_2$;
\item $1_1\leq 1_1\cdot_21_1$;
\item $1_1\leq 1_2$.
\end{itemize}

\begin{example}
Given a relational double monoid as considered in Subsection
\ref{ss:relational} above, we can apply the free $\bB$-module
construction, discussed in Subsection \ref{ss:asb} to get a double
$\bB$-algebra.

For example, given the relational double monoid on $M=A^*$ given by
concatenation and shuffle of words, we get the double $\bB$-algebra of
languages, with operations consisting of the liftings of concatenation
and shuffle to the language level. This systematises the construction
in Example \ref{ex:shuffle-languages}.
\end{example}

\begin{example}
Another example is given by meets in a monoidal lattice: a lattice $M$
with a monoidal structure $\cdot,1$ such that
\begin{align*}
  (a\cdot b)\vee(c\cdot d)&\leq (a\vee c)\cdot(b\vee d),\\
  1\vee 1&\leq 1.
\end{align*}
It follows from the first equation that $\cdot$ is increasing in both variables: if $a\leq a'$ and $b\leq b'$, then
\[a\cdot b\leq (a\cdot b)\vee(a'\cdot b')\leq (a\vee a')\cdot(b\vee b') = a'\cdot b'.\]

This structure is, by construction, a $\bB$-algebra, and the meet
automatically extends it to a double $\bB$-algebra: the identity
\[(a\cdot b)\wedge(c\cdot d)\geq (a\wedge c)\cdot(b\wedge d)\]
is satisfied since both terms of the left-hand side are greater than
the right-hand side by increasingness of $\cdot$.
\end{example}

\begin{remark}
It is also possible to define a $\bB$-bialgebra as a $\bB$-module with
a $\bB$-algebra structure and a $\bB$-coalgebra structure and a lax
interaction between them. Examples can be made, for example, by taking
the incidence coalgebra (as defined in Example \ref{ex:incidence}) of
a monoidal poset and obtaining a $\bB$-algebra structure induced by
the monoidal structure, but we will not pursue this here.
\end{remark}

\subsection{Convolution}

\begin{defn}
  For every monoid object $M$ and comonoid object $C$ in a monoidal
  category $\cC$ there is a \emph{convolution product} on morphisms
  $C\rightarrow M$, making them into a monoid. The unit is the
  morphism $C\stackrel{u}{\rightarrow}1\stackrel{u}{\rightarrow}M$
  obtained by composing the counit and unit. The product of
  $f,g:C\rightarrow M$ is the composite
\begin{center}
  \begin{tikzcd}
    C\ar[r, "m"]&C\otimes C\ar[r, "f\otimes g"]&M\otimes M\ar[r, "m"]&M
  \end{tikzcd}
\end{center}
obtained by composing the comultiplication, the tensor of the two
maps, and the multiplication map.
\end{defn}
It is easy to check that the convolution product is associative and
unital. Also, if $\cC$ is symmetric monoidal, and $C$ is a
cocommutative comonoid object, and $M$ is a commutative monoid object,
then the convolution product is commutative.

\begin{example}
  As an example, working in the category of $\bB$-modules we can take
  the \emph{dual} $\Hom_\bB(C,\bB)$ of a $\bB$-coalgebra $C$.

  If $C$ is isomorphic to $\bB^X$ for a finite set $X$, then
  $\Hom_\bB(C,\bB)$ is also isomorphic to $\bB^X$. However, for larger
  $\bB$-coalgebras, we get something much bigger.

  Using this construction, we can reconstruct much of the theory of
  languages. Indeed, if $A$ is a set and $M$ the free monoid on $A$
  (thought of as the set of words on alphabet $A$), then since $M$ is
  finitely decomposable, $\bB^M$ has a $\bB$-coalgebra structure given
  by
  \[w\mapsto\sum_{w_1w_2=w}(w_1,w_2).\]
  This is non-cocommutative (unless $A$ is empty or a singleton).

  The dual $\bB$-algebra $\Hom_\bB(\bB^M,\bB)$ can be thought of as
  the collection of formal noncommutative power series with
  coefficients in $\bB$: since a map of $\bB$-modules
  $\bB^M\rightarrow\bB$ is determined by the map of sets
  $M\rightarrow\bB$, we get a coefficient for each word, and the
  multiplication rule is the usual noncommutative power series
  multiplication.  It can equally be thought of as the set of
  languages over the alphabet $A$: for each word in $M$, a
  homomorphism $f:\bB^M\rightarrow\bB$ classifies whether that word is
  in the language or not.
\end{example}

This situation generalises considerably. Let $\cC$, $\cD$ and $\cE$ be
monoidal categories. The category $\cC^\op\times\cD$ has a natural
monoidal structure. A lax functor from $1$ then consists of a comonoid
object in $\cC$ and a monoid object in $\cD$. Its composite with any
lax monoidal functor $H:\cC^\op\times\cD\rightarrow\cE$ gives a monoid
object in $\cE$.

Above we considered the special case of $\cD=\cC$, $\cE=\Set$ (with
the cartesian monoidal structure), and $H(X,Y)$ being the set of
morphisms $\cC(X,Y)$ from $X$ to $Y$. This functor $H$ has a lax
structure, where $\varepsilon:1\rightarrow\cC(\I,\I)$ picks out the
identity function $\I\rightarrow\I$, and
$\mu:\cC(X_1,Y_1)\times\cC(X_2,Y_2)\rightarrow\cC(X_1\otimes
X_1,Y_1\otimes Y_2)$ sends $(f,g)$ to $f\otimes g$.

Another simple example is given by the lax monoidal functor from
Proposition \ref{prop:bmod-hom-lax}. This tells us that if $M$ is a
$\bB$-algebra, and $C$ is a $\bB$-coalgebra, then the set of
$\bB$-module maps $\Hom_\bB(C,M)$ has the natural structure of a
$\bB$-algebra.


We can generalise further, using fully lax functors of $n$-fold
monoidal categories. If $\cC$, $\cD$ and $\cE$ are in fact symmetric
monoidal, then any lax monoidal functor
$\cC^\op\times\cD\rightarrow\cE$ is a fully lax functor of $n$-fold
monoidal categories. This gives in particular that the convolution of
an $n$-fold comonoid object with an $n$-fold monoid object in a
symmetric monoidal category is an $n$-fold monoid object.

\subsection{Graded objects}

We can produce a kind of categorification of the example
$\Hom(\bB^M,\bB)$. Let $M$ be a monoid and $\cC$ a category with
coproducts (if $M$ is finitely decomposable, it suffices to require
that $\cC$ have finite coproducts). Then the category of
\emph{$M$-graded objects of $\cC$} is the product category $\cC^M$. It
also has coproducts, which are computed pointwise.

If $\cC$ is a symmetric monoidal category such that $\otimes$
distributes over coproducts (and, again, if $M$ is finitely decomposable, we
need only that $\otimes$ distributes over finite coproducts) then
$\cC^M$ becomes a monoidal category with the \emph{graded unit}
defined by $\I_1 = \I$ and $\I_m = 0$ otherwise (where $0$ is an
initial object) and \emph{graded product} defined by
\[(X\otimes Y)_m = \bigsqcup_{n_1n_2=m}\left(X_{n_1}\otimes Y_{n_2}\right).\]
It also has the \emph{pointwise monoidal structure}, as it is simply a
product of monoidal categories, defined by $\1_m=\1$ and $(X\odot Y)_m
= X_m\otimes Y_m$.

We claim that these form a double monoidal structure. Indeed, the map
\[\chi:(W\odot X)\otimes(Y\odot Z)\longrightarrow(W\otimes Y)\odot(X\otimes Z)\]
has for its $m$th component a map
\[\bigsqcup_{n_1n_2=m}W_{n_1}\otimes X_{n_1}\otimes Y_{n_2}\otimes Z_{n_2}\longrightarrow\left(\bigsqcup_{n_1n_2=m}W_{n_1}\otimes Y_{n_2}\right)\otimes\left(\bigsqcup_{n_1n_2=m}X_{n_1}\otimes Z_{n_2}\right),\]
which, upon multiplying out using distributivity, is just an inclusion
of components.

The map $\zeta:\I\rightarrow\I\odot\I$ is a natural isomorphism, while
the map $\nu:\1\otimes\1\rightarrow\1$ has $m$th component given by
the fold map
\[\sqcup_{n_1n_2=m}\I\rightarrow\I,\]
and lastly the map $\iota:\I\rightarrow\1$ is the identity (on the
$1$st component) and the initial map (on every other component).

In fact, in the usual way of the examples above, this can be extended
to an $n$-fold monoidal structure, by using several copies of each of
these two monoidal structures described here.

In homological algebra \cite{Weibel}, heavy use is made of graded
abelian groups; in such cases $M$ is usually the additive monoid of
the natural numbers, the integers, or the integers modulo two.

\begin{bibdiv}
\begin{biblist}
\bibselect{bibliography}
\end{biblist}
\end{bibdiv}
\countgrumbles
\end{document}